\renewcommand*{\eqref}[1]{%
  \hyperref[{#1}]{\textup{\tagform@{\ref*{#1}}}}%
}
\newcommand{\be}{\begin{equation}}
\newcommand{\ee}{\end{equation}}
\newcommand{\beq}{\begin{eqnarray}}
\newcommand{\eeq}{\end{eqnarray}}
\newtheorem{thm}{Theorem}[section]
\newtheorem{conj}{Conjecture}[section]
\newtheorem{lma}{Lemma}[section]
\newtheorem{prop}{Proposition}[section]
\newtheorem{cor}{Corollary}[section]
\newtheorem{defn}{Definition}[section]
\theoremstyle{remark}
\newtheorem{rem}{Remark}[section]
\numberwithin{equation}{section}
\def\be{\begin{equation}}
\def\ee{\end{equation}}
\def\bee{\begin{equation*}}
\def\eee{\end{equation*}}
\def\wt{\widetilde}
\def\la{\langle}
\def\ra{\rangle}
\def\p{\partial}
\def\R{\mathbb{R}}
\def\o{\overline}
\def\d{\frac {\operatorname d}{\operatorname {dt}}}
\def\vh{\vspace{.2cm}}
\def\S{\Sigma}
\def\Lp{\Delta}
\def\Na{\nabla}
\def\eps{\varepsilon}
\def\nl{\frac{\Na u}{|\Na u|}}
\def\d{\delta}
\def\O{\Omega}
\def\ti{\tilde}
\def\mL{\mathcal{L}}
\def\l{\lambda}
\def\mF{\mathcal{F}}
\def\mE{\mathcal{E}}
\def\mB{\mathcal{B}}
\def\mH{\mathcal{H}}
\def\HH{\mathbb{H}}
\begin{document}
\title[]
{Dihedral Rigidity for Cubic Initial Data Sets}

 \author{Tin-Yau Tsang}
\address [Tin-Yau Tsang] {Department of Mathematics, University of California, Irvine}
\email{tytsang@uci.edu}


\date{Aug, 2021}

\begin{abstract} 
In this paper we pose and prove a spacetime version of Gromov's dihedral rigidity theorem (\cite{Gro14},\cite{Li0},\cite{Li1}) for cubes when the dimension is 3 by studying the level sets of spacetime harmonic functions (\cite{S},\cite{BS},\cite{HKK}), extending the work of \cite{CK}.  As a corollary, we also obtain an alternative proof of dihedral rigidity for prisms in hyperbolic space (\cite{Li2}). We then discuss the relation between polyhedra and the spacetime positive mass theorem. This generalises the work of \cite{MP} and \cite{Li1}. Finally, we show dihedral rigidity of charged Riemannian cubes by charged harmonic functions (\cite{BHKKZ}). 
\end{abstract}

\keywords{}

\maketitle

\markboth{TIN-YAU TSANG}{Dihedral Rigidity for Cubic Initial Data Sets}

\section{Introduction}
Gromov (\cite{Gro14} Section 2.2) proposed the following conjecture to study the geometry of scalar curvature with a lower bound and to define non-negative scalar curvature for $C^0$ metric. 
\begin{conj}[The dihedral rigidity conjecture]\label{conj:dihedral.rigidity}
Suppose $(M,g)$ is a Riemannian polyhedron with nonnegative scalar curvature and weakly mean convex faces. Suppose 
that the dihedral angles of $(M,g)$ are not larger than the (constant) dihedral angle between corresponding faces of the model Euclidean polyhedron $(M,g_{Euc})$. Then $(M,g)$ is isometric to a flat Euclidean polyhedron.
\end{conj}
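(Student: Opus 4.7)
The plan is to adapt the spacetime harmonic function techniques that the paper introduces, specialized to the time-symmetric (Riemannian) case, to give a proof for the cube and to indicate where the argument would need to be generalized for arbitrary polyhedra. So I focus the proposal on the case most directly within reach, namely when $(M,g)$ is a topological cube of dimension $3$, and model it on the approach of Chai--Khuri that the paper builds upon.

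First I would fix an orientation of the cube by picking a pair of opposite faces, call them $F_+$ and $F_-$, and the four remaining side faces $S_1,\dots,S_4$. I would then solve the mixed boundary value problem
\[
\Delta_g u = 0 \text{ on } M, \qquad u=1 \text{ on } F_+, \quad u=0 \text{ on } F_-, \qquad \partial_\nu u = 0 \text{ on each } S_i,
\]
where $\nu$ is the outward unit normal. The assumption that the dihedral angles between $F_\pm$ and the side faces are $\le \pi/2$ is precisely the compatibility condition that makes this oblique/mixed problem solvable with $u \in C^{2,\alpha}$ in the interior and with enough regularity up to the smooth part of the boundary for the integrations below; a weak/limiting formulation handles the edges and corners.

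Next I would run the Bochner integration on level sets. Writing $\Sigma_t = u^{-1}(t)$, one has the pointwise identity
\[
\Delta |\nabla u| = \frac{|\nabla^2 u|^2 - |\nabla |\nabla u||^2}{|\nabla u|} + \Ric(\nabla u, \nabla u)/|\nabla u|,
\]
valid where $|\nabla u|>0$. Decomposing $|\nabla^2 u|^2$ along and transversal to $\Sigma_t$, using the harmonicity of $u$ to convert tangential Hessian traces into the mean curvature $H_{\Sigma_t}$, and invoking the Gauss equation together with the Gauss--Bonnet theorem on $\Sigma_t$ (which is a topological square for each $t$), one obtains, after integrating over $M$, an inequality of the schematic form
\[
0 \le \int_M \frac{|\nabla^2 u|^2 - |\nabla|\nabla u||^2}{|\nabla u|} \le -\tfrac12\int_M R_g|\nabla u| - \int_{\partial M} (\text{mean curvature terms}) - \sum_{\text{edges}} (\text{dihedral angle terms}),
\]
where the Neumann condition on the $S_i$ removes the problematic boundary contribution on the sides, the mean convexity of the faces controls the face term with a favorable sign, and the hypothesis that the dihedral angles are $\le\pi/2$ controls the edge term with a favorable sign. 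Combined with $R_g\ge 0$, every term on the right is $\le 0$, forcing equality throughout.

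Finally I would extract rigidity from the equality case. Equality in the Bochner step forces $|\nabla u|$ to be constant along $\Sigma_t$ and $\nabla^2 u = 0$ in a direction transversal to $\Sigma_t$, so the level sets form a parallel foliation by totally geodesic flat squares meeting the side faces orthogonally; together with $R_g\equiv 0$ and vanishing mean curvature on all faces, a standard splitting argument then identifies $(M,g)$ with the Euclidean cube. The main obstacle I expect is twofold: (i) the regularity theory for the mixed boundary value problem at edges and at the eight corners, where the boundary conditions switch type and the classical Schauder theory fails, so one must justify the integrations by parts by an exhaustion/capacity argument together with the dihedral angle bound; and (ii) extending this from the cube to an arbitrary polyhedron, where the level sets need no longer be topologically simple and one would have to generalize the level-set Gauss--Bonnet step and choose boundary data compatible with the combinatorics of the polyhedron, which is the reason the general Conjecture~\ref{conj:dihedral.rigidity} remains open.
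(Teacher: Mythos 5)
The statement you are asked to prove is Gromov's dihedral rigidity conjecture for \emph{arbitrary} Riemannian polyhedra, and the paper does not prove it either: it is stated as Conjecture~\ref{conj:dihedral.rigidity} and remains open. What the paper actually establishes is a spacetime analogue for $3$-dimensional initial data sets of cube (and prism) type, whose time-symmetric specialization $k\equiv 0$ recovers the known Riemannian cube/prism case. Your proposal, by your own admission in the final sentence, only addresses the $3$-dimensional Riemannian cube, so it is not a proof of the stated conjecture; the reduction from ``arbitrary polyhedron'' to ``cube'' is not a technicality to be generalized later but is precisely where the conjecture lives (the level-set Gauss--Bonnet bookkeeping, the choice of boundary data compatible with the face combinatorics, and the dimension restriction $n=3$ are all essential to the method and do not extend).

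Within its restricted scope your sketch follows essentially the same route as the paper's $k\equiv0$ specialization (Chai--Kim style): a harmonic function with Dirichlet data on two opposite faces and homogeneous Neumann data on the sides, the Bochner identity for $|\nabla u|$ integrated via the coarea formula and Gauss--Bonnet on the level sets (which contribute $2\pi\chi(\Sigma_t)-\sum_j(\pi-\alpha_j)\le 0$ when the angles are $\le\pi/2$), and rigidity from the equality case ($\nabla\nabla u=0$, hence a parallel gradient and a splitting). But the steps you defer are exactly the substance of the paper: existence and regularity of the mixed boundary value problem on a polyhedron (Appendices~\ref{Existence of spacetime harmonic functions} and~\ref{Existence of spacetime harmonic functions 2}, via reflection, regularization and the implicit function theorem), the justification of the divergence theorem $\int_{\partial M}\partial_\nu|\nabla u|=\int_M\Delta|\nabla u|$ on a domain with edges and corners (Lemma~\ref{IFC}, requiring the blow-up argument at horizontal edges and the boundary Harnack/Schauder scaling estimates to control $|\nabla\nabla u|$ near the singular set), and, in the equality case, the argument that $|\nabla u|$ never vanishes so that one genuinely has a smooth foliation, plus the repetition of the construction in the other two axis directions to flatten all six faces before splitting. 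Asserting these as ``a weak/limiting formulation handles the edges and corners'' and ``a standard splitting argument'' leaves the analytically hard content unproved, so even the cube case is a proof outline rather than a proof, and the general statement is untouched.
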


Chao Li has made major progress on this problem; in particular, the following results are obtained.    
\begin{thm}(\cite{Li0},\cite{Li1})\label{theo.dihedra.rigidity}
Let $2\le n\le 7$, $P^n$ be a Euclidean prism with dihedral angles at most $\pi/2$, and if $n=3$, $P^3$ can be an arbitrary simplex in $\R^3$.  Assume $M^n$ is a Riemannian polyhedron of type $P$. Then Conjecture \ref{conj:dihedral.rigidity} holds for $M$. Precisely, if $g$ is a $C^{2,\alpha}$ metric on $M$ such that
\begin{enumerate}
    \item The scalar curvature of $g$ is nonnegative;
    \item Each face of $M$ is weakly mean convex;
    \item The dihedral angles between adjacent faces of $(M,g)$ are everywhere less than or equal to the corresponding (constant) dihedral angles of $(P,g_{Euc})$.
\end{enumerate}
Then $(M,g)$ is isometric to a Euclidean polyhedron.
\end{thm}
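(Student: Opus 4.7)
The plan is to argue by contradiction via a capillary minimal surface (or $\mu$-bubble) method adapted to polyhedral domains. Suppose the three hypotheses hold. I would first reduce to the case $n=3$; the higher-dimensional prism case can be handled either by slicing inductively along one factor of $P^n=F^{n-1}\times[0,1]$ (since a product prism has a flat factor one can peel off), or by running the argument below directly with a smooth stable capillary hypersurface whose stability operator has nonnegative first eigenvalue.

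In dimension three I would set up a capillary Plateau problem: among integer currents (or Caccioppoli sets) $\Sigma\subset M$ that are homologous to the ``bottom'' face $F_0$ of the model prism $P$ relative to the lateral faces $F_1,\dots,F_k$, minimise the capillary functional
\be
\mathcal{A}(\Sigma)\;=\;\mathrm{Area}_g(\Sigma)\;-\;\sum_{i=1}^{k}\cos(\theta_i)\,\mathrm{Area}_g(T_i),
\ee
where $\theta_i$ is the prescribed Euclidean dihedral angle between $F_0$ and $F_i$ and $T_i\subset F_i$ is the region cut off on $F_i$. Direct method in $BV$ gives a minimiser $\Sigma$; capillary regularity theory (De Giorgi for the interior, Taylor / De Philippis--Maggi at the free boundary, together with the strict inequality $\theta_i\le\pi/2$ entering as a barrier condition) yields smoothness of $\Sigma$ up to the edges. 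The mean convexity of $F_i$, combined with the dihedral angle hypothesis $\theta^g_i\le\theta_i$, acts as a one-sided barrier that prevents $\Sigma$ from collapsing onto a lower-dimensional stratum, so $\Sigma$ is a genuine capillary minimal surface meeting each $F_i$ at contact angle $\theta_i$.

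The next step is to exploit stability. The second variation of $\mathcal{A}$ along a normal deformation $\phi$ gives
\be
0\;\le\;\int_{\Sigma}\bigl(|\nabla^\Sigma\phi|^2-(|A|^2+\Ric(\nu,\nu))\phi^2\bigr)\,dA\;-\;\sum_i\int_{\partial_i\Sigma}\bigl(II^{F_i}(\nu,\nu)+\cot(\theta_i)\,|A|\bigr)\phi^2\,ds.
\ee
Rewriting $|A|^2+\Ric(\nu,\nu)=\tfrac12(R_g-R^\Sigma+|A|^2)+K^\Sigma$ and using Gauss--Bonnet on the surface with corners $\Sigma$, together with the geodesic-curvature identity at the free boundary (which absorbs the $\cot(\theta_i)$ terms into the boundary turning angles), the hypotheses $R_g\ge 0$, $H_{F_i}\ge 0$, $\theta^g_i\le\theta_i$ force
\be
2\pi\chi(\Sigma)\;\ge\;\sum_{\text{corners}}\bigl(\pi-\theta_i\bigr),
\ee
which is exactly the flat Euclidean identity for the corresponding planar region in $P$. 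This rules out any strict improvement and so produces the non-existence obstruction that contradicts the assumption.

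The rigidity portion is where I expect the main technical effort. In the case of equality, every inequality above must be saturated pointwise: $R_g\equiv 0$ along $\Sigma$, $|A|\equiv 0$, $\nabla^\Sigma H\equiv 0$, $H_{F_i}\equiv 0$ along $\partial_i\Sigma$, and $\theta^g_i\equiv\theta_i$. Varying the boundary data (translating the prescribed face $F_0$ or perturbing $\mathcal{A}$ by a small linear height function) and invoking the implicit function theorem for the capillary Jacobi operator, one obtains a one-parameter family of capillary minimisers $\{\Sigma_t\}$ that foliates a neighbourhood; each leaf is totally geodesic with flat induced metric and meets the lateral faces at the Euclidean angle. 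Standard de Rham--type splitting then gives a local isometry with $(P,g_{Euc})$, and a continuation / connectedness argument extends it globally. The hardest step is undoubtedly the capillary regularity and foliation construction at the edges and vertices of $\partial M$, where the standard free-boundary theory must be upgraded to handle non-smooth container boundaries with the sharp angle condition $\theta_i\le\pi/2$; this is where the dimensional restriction $n\le 7$ (or the simplex restriction when $n=3$) enters.
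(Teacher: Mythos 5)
This theorem is quoted by the paper from \cite{Li0} and \cite{Li1}; the paper itself does not reprove it, so the relevant comparison is with the cited variational proof on the one hand and with the alternative technique this paper develops on the other. Your sketch is essentially the cited route of Li: minimise a capillary functional $\mathrm{Area}(\Sigma)-\sum_i\cos\theta_i\,\mathrm{Area}(T_i)$ among Caccioppoli sets, establish existence and regularity of a free-boundary/capillary minimiser (this is where $2\le n\le 7$ and the angle condition $\theta_i\le\pi/2$ enter), then combine stability, the Gauss equation and Gauss--Bonnet with corners to close an angle-deficit inequality, and finally analyse the equality case by constructing a foliation of capillary minimal leaves and splitting the metric. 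That is a genuinely different route from the machinery of the present paper, which (in the special case $k\equiv 0$, $n=3$, prism type) recovers the statement by solving a mixed Dirichlet--Neumann problem for a (spacetime) harmonic function (Lemma \ref{PDEcube2}), integrating the Stern-type identity over level sets (Lemma \ref{cubic integration formula}, Lemma \ref{PDEimplication}), and reading rigidity off the resulting smooth foliation (Corollary \ref{Euclideanprism}), following \cite{S} and \cite{CK}. The trade-off is clear: the variational approach reaches all dimensions $2\le n\le 7$ and handles simplices in $\R^3$, at the cost of capillary regularity theory at edges and vertices; the level-set approach is confined to dimension $3$ and prism-type (trivial) topology, but replaces geometric measure theory by elliptic PDE on a polyhedron and, crucially for this paper, extends to initial data sets satisfying the dominant energy condition.

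Two points in your outline would need repair even as a summary of the cited proof. First, the higher-dimensional case is not obtained by ``peeling off a flat factor'' of $P^n$: in \cite{Li1} the argument is an induction in which the capillary minimiser inside $M^n$ is itself a polyhedron-type manifold inheriting the angle and mean-convexity data through the contact angles, and the inductive hypothesis is applied to it; a naive product slicing does not use the hypotheses on $M$ at all. Second, since all hypotheses are weak inequalities there is nothing to contradict: the Gauss--Bonnet balance closes only up to equality, and the entire content of the theorem is the equality (rigidity) analysis; the contradiction formulation applies only to the strict-inequality variant (compare Corollary \ref{non-existence}). Relatedly, the boundary term you wrote in the second variation, $II^{F_i}(\nu,\nu)+\cot(\theta_i)|A|$, is not the correct capillary stability integrand (the correct term couples the second fundamental forms of $\Sigma$ and of the face through $1/\sin\theta_i$ and $\cot\theta_i$), though this is a fixable slip rather than a structural flaw in the plan.
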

There is also the following polyhedral comparison result for hyperbolic polyhedra.
\begin{thm}(\cite{Li2})\label{theo.parabolic.cube}
Let $2\le n\le 7$. In a coordinate system such that $g_\HH$ is expressed as $$(dx^1)^2+e^{2x^1}\sum_{j=2}^{n}(dx^j)^2,$$ assume $M^n$ is a Riemannian polyhedron modelled on $[0,1]\times P^{n-1}$, where $P^{n-1}$ is a polyhedron such that Theorem \ref{theo.dihedra.rigidity} holds.  Denote the face $\partial M\cap \{x^1=1\}$ by~$F_T$ and the face $\partial M\cap \{x^1=0\}$ by~$F_B$. Assume~$g$ is a $C^{2,\alpha}$ Riemannian metric on~$M$ such that
\begin{enumerate}\itemsep=0pt
\item$R(g)\ge -n(n-1)$ in $M$;
\item $H(g)\ge n-1$ on~$F_T$, $H(g)\ge -(n-1)$ on~$F_B$, and~$H(g)\ge 0$ on~$\partial M\setminus (F_T\cup F_B)$;
\item The dihedral angles between adjacent faces of $(M,g)$ are everywhere less than or equal to the corresponding (constant) dihedral angles of $([0,1]\times P^{n-1},g_{\HH})$.
\end{enumerate}
Then $(M,g)$ is isometric to a parabolic prism in $\HH^n$.
\end{thm}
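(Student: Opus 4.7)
The plan is to derive Theorem~\ref{theo.parabolic.cube} from the main cubic initial data set dihedral rigidity theorem of this paper via the classical umbilic reduction $k=g$, which converts hyperbolic-type scalar curvature bounds into spacetime ones. Concretely, given $(M,g)$ satisfying (1)--(3), I declare $k := g$ and view $(M,g,k)$ as an initial data set, then verify that the hypotheses of the cubic IDS theorem on $(M,g,k)$ are implied by (1)--(3).

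Since $\mathrm{tr}_g k = n$ and $|k|_g^2 = n$, the constraint quantities are
\begin{equation*}
2\mu = R(g) + (\mathrm{tr}_g k)^2 - |k|_g^2 = R(g) + n(n-1), \qquad J = \mathrm{div}_g\!\bigl(k - (\mathrm{tr}_g k) g\bigr) = 0,
\end{equation*}
so hypothesis~(1) is precisely the dominant energy condition $\mu \ge |J|$. For any face $F \subset \partial M$ with outward unit normal $\nu$ one has $\mathrm{tr}_F k = \mathrm{tr}_g k - k(\nu,\nu) = n-1$, so the null expansions along $F$ are $\theta^\pm = H(g) \pm (n-1)$, and the three clauses of~(2) become $\theta^- \ge 0$ on $F_T$, $\theta^+ \ge 0$ on $F_B$, and $\theta^+ \ge n-1 > 0$ on the lateral faces $\partial M \setminus (F_T \cup F_B)$. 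Dihedral angles depend only on $g$, so (3) transfers verbatim, and the model $[0,1]\times P^{n-1}$ with $g_\HH$, regarded as an IDS with $k = g_\HH$, is exactly the cubic-type comparison space to which the main theorem applies.

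Applying the cubic IDS dihedral rigidity theorem therefore forces $(M,g,k)$ to be isometric as an initial data set to this hyperboloidal cube; combined with the umbilicity $k=g$, which pins the embedding to a hyperboloidal slice of Minkowski spacetime, this identifies $(M,g)$ isometrically with a parabolic prism in $\HH^n$. The main obstacle I anticipate is verifying that the cubic IDS theorem is formulated with boundary hypotheses flexible enough to accommodate this asymmetric combination of $\theta^\pm$ signs on top, bottom, and lateral faces --- in particular the lateral faces are only ``one-sided'' ($\theta^+>0$ with no lower bound on $\theta^-$), so the spacetime harmonic function method (and the relevant boundary terms in its Bochner formula) must be arranged to exploit only the signs that are actually available on each face. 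Once that compatibility is confirmed, the reduction from the hyperbolic polyhedron to the cubic IDS setting is purely algebraic and carries the rigidity statement along with it.
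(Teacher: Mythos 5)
Your reduction $k:=g$ is indeed the paper's starting point for the hyperbolic case, but the way you then invoke ``the cubic IDS theorem'' as a black box fails on three concrete counts. First, the statement you are proving is for $2\le n\le 7$ and a general base $P^{n-1}$, while the cubic theorem of this paper (Theorem \ref{maindihedral}/Theorem \ref{cuberigidity}) is only for $n=3$ with rectangular base; the spacetime harmonic function level-set method is intrinsically $3$-dimensional (the paper says so explicitly in Section \ref{localized polyhedra SPMT}), so no algebraic repackaging with $k=g$ can carry it to $n\le 7$. The paper itself does not reprove Theorem \ref{theo.parabolic.cube} in full: it cites \cite{Li2}, and its own contribution is only the $3$-dimensional case with prism topology (Corollary \ref{parabolichyperbolic}). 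Second, even in dimension $3$ the hypotheses do not transfer as you claim: with $k=g$ one has $\pi=-2g$, so the boundary dominant energy condition required by Theorem \ref{maindihedral} reads $H\ge|\pi(\cdot,\nu)|=2$ on \emph{every} face, whereas hypothesis (2) only gives $H\ge-2$ on $F_B$ and $H\ge0$ on the lateral faces. You flag this asymmetry as ``the main obstacle,'' but resolving it is precisely the content of Lemma \ref{PDEimplication}, which is proved with the one-sided conditions $H\ge-\mathrm{tr}_Tk$ on $T$, $H\ge\mathrm{tr}_Bk$ on $B$, $H\ge|\pi^T(\cdot,\nu)|$ on $F$ (and $\pi^T(\cdot,\nu)=0$ when $k=g$); it is this lemma, not the cubic theorem, that applies.

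Third, the conclusion of the cubic theorem is the wrong one: its rigidity step uses the full boundary DEC together with the cube's symmetry (re-solving the PDE with the roles of the faces permuted) to force $\mu=|J|=0$, $\Pi=k|_{T(\p M)}=0$ and an embedding into \emph{Minkowski} space with flat rectangular boundary --- all incompatible with $k=g$ --- so it cannot ``force $(M,g,g)$ to be a hyperboloidal cube.'' The paper's actual argument after Lemma \ref{PDEimplication} is different: each level set is a free boundary totally spacetime geodesic MOTS with $h_{\S_t}=-g_{\S_t}$ (a horosphere), and the $k=g$-specific computation $\Na_X|\Na u|=-g(N,X)=0$ for $X\in T\S_t$ shows $|\Na u|$ is constant on each level set, so the metric splits as $g=ds^2+f(s)\,\delta_{ij}dx^idx^j$; the mean curvature identity $-2=\p_sf/f$ then gives $f=e^{-2s+C}$, identifying $(M,g)$ with a parabolic prism. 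Your final sentence about umbilicity ``pinning the embedding to a hyperboloidal slice'' has no counterpart in the proof and, as written, rests on a theorem whose hypotheses and conclusion both fail here.
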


From the perspective of initial data sets, the aforementioned results give a comparison of a given polyhedral initial data set to standard ones $(\R^n,g_{Euc},0)$ and $(\HH^n,g_{\HH},g_{\HH})$.  Meanwhile, the lower bounds on scalar curvature and mean curvature correspond to energy conditions.  A natural question arises as to whether there is a corresponding version of these dihedral rigidity results for general initial data sets.

Stern (\cite{S}) proposed a novel harmonic function approach to study scalar curvature and the topology of 3 dimensional manifolds.  Subsequently the method was extended to give new proofs of several positive mass theorems 
using ``spacetime" harmonic functions (\cite{BHKKZ}).  Recently, Chai and Kim (\cite{CK}) adopted the harmonic function approach to prove dihedral rigidity for cubes. This involved prescribing suitable boundary conditions for the harmonic functions. Following their ideas, and motivated by the Hamiltonian formulation of the Einstein equations, we have obtained results on dihedral rigidity for general initial data sets satisfying natural energy conditions.

\begin{thm}\label{maindihedral}
Let $(M^3,g,k)$ be an initial data set of cube type which simultaneously satisfies: 
\begin{enumerate}
\item the dominant energy condition, 
\item the boundary dominant energy condition, 
\item everywhere the dihedral angle between two faces of $M$ is less than or 
equal to $\pi/2$. 
\end{enumerate}
Then,  $(M,g,k)$ can be isometrically embedded into Minkowski space with boundary isometric to the boundary of a Euclidean rectangular prism.  
\end{thm}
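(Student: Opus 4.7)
The plan is to adapt the spacetime harmonic function method of \cite{HKK} and \cite{BHKKZ}, combined with the cubical setup of Chai--Kim \cite{CK}, to the initial data set case. The central object will be a spacetime harmonic function
\begin{equation*}
u : M^3 \to [0,1], \qquad \Delta_g u + (\tr_g k)\,|\nabla u|_g = 0,
\end{equation*}
subject to $u=0$ on a chosen face $F_B$, $u=1$ on the opposite face $F_T$, and, on the four remaining side faces, a Neumann-type boundary condition twisted by $k$, designed so that each level set $\Sigma_t := \{u=t\}$ meets the side faces in a way compatible with the Stern-type pointwise identity. To solve this mixed boundary value problem on a Lipschitz polyhedral domain one approximates $M$ by smooth domains whose boundary angles still do not exceed $\pi/2$, solves a sequence of regularised problems via standard elliptic theory, and passes to the limit with Schauder estimates away from the edges.

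Next I would apply the spacetime Bochner/Stern identity to $u$. After dividing by $|\nabla u|$, combining with $\tr_g k$ and $R(g)$, and integrating on a regular level set $\Sigma_t$, one obtains a bound of the shape
\begin{equation*}
\int_{\Sigma_t} K_{\Sigma_t}\, dA \;\geq\; \int_{\Sigma_t} \Big[\,\mu + J\big(\tfrac{\nabla u}{|\nabla u|}\big) + \tfrac{1}{2}\big|\nabla^2 u/|\nabla u| + k\big|_{T\Sigma_t}^2\,\Big]\, dA \;+\; \int_{\partial \Sigma_t}\!\!(\cdots)\, d\ell,
\end{equation*}
where $K_{\Sigma_t}$ is the Gaussian curvature of $\Sigma_t$. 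Generically $\Sigma_t$ is a topological disk whose boundary is formed of four smooth arcs lying in the four side faces, meeting at four corners along the vertical edges of the cube. The Gauss--Bonnet theorem with corners then yields
\begin{equation*}
\int_{\Sigma_t} K_{\Sigma_t}\, dA + \int_{\partial \Sigma_t} \kappa_g\, d\ell = 2\pi - \sum_{i=1}^{4}(\pi - \alpha_i),
\end{equation*}
and the dihedral hypothesis $\alpha_i \leq \pi/2$ forces the right-hand side to be $\leq 0$. Combining with the Bochner inequality, the interior DEC $\mu + J(\tfrac{\nabla u}{|\nabla u|}) \geq 0$, and the boundary DEC (used to dominate $\int_{\partial \Sigma_t} \kappa_g$ by a favourable combination of $H_{\partial M}$ and $k$-traces), one concludes that every inequality collapses to an equality for every $t \in [0,1]$.

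From the equality case, standard rigidity arguments give that $|\nabla u|$ is constant, $M$ is foliated by flat totally geodesic level sets, $\nabla^2 u + |\nabla u|\,k \equiv 0$, $\mu = 0 = J$, and every dihedral angle equals exactly $\pi/2$. These identities identify $(M,g)$ with a flat rectangular prism and force $k$ to arise as the second fundamental form of an embedding of $M$ into Minkowski $\mathbb{R}^{3,1}$, produced via an explicit height function built from $u$ along the foliation; the boundary identification with the boundary of a Euclidean rectangular prism is then immediate.

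The main obstacle I anticipate is twofold. First, existence and regularity for the spacetime harmonic BVP with $k$-twisted Neumann data on a cube: the twist introduces genuine coupling at the vertical edges, where the data on two adjacent side faces must be consistent in a way not automatic from the equation; this is absent in the pure Riemannian setting of \cite{CK} and will require either a careful choice of the side-face condition (e.g.\ $\partial_\nu u = -k(\nu,\nu)|\nabla u|$) together with an edge compatibility check, or an approximation scheme whose limit still produces the correct corner contribution in Gauss--Bonnet. Second, arranging that the boundary terms from integration by parts in the Bochner identity combine with $\int_{\partial \Sigma_t}\kappa_g$ under the boundary DEC to produce an integrand of the correct sign; this is where the precise formulation of the boundary dominant energy condition in terms of $H_{\partial M}$ and $\tr_{\partial M} k$ must be used, and is the key analogue of the mechanism that makes the Chai--Kim boundary condition effective in the Riemannian case.
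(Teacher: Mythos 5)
Your overall skeleton (a spacetime harmonic function with Dirichlet data on two opposite faces and a Neumann-type condition on the sides, the Stern/Bochner integral identity on level sets, Gauss--Bonnet with corners, and the dihedral hypothesis forcing equality) is the same as the paper's, but two of your key steps do not work as described. First, the side-face boundary condition: you propose a $k$-twisted condition such as $\partial_\nu u=-k(\nu,\nu)|\nabla u|$ and flag the edge compatibility as an open obstacle. The twist is not only unnecessary, it would break the mechanism. The proof uses the homogeneous condition $\partial_\nu u=0$, which makes every level set $\Sigma_t$ a free boundary surface meeting the side faces orthogonally, so that (i) the corner angles of $\partial\Sigma_t$ equal the dihedral angles of $M$ along the vertical edges (this is what feeds Gauss--Bonnet), and (ii) the boundary term from integrating $\Delta|\nabla u|$ is $\partial_\nu|\nabla u|=-|\nabla u|\,\Pi(N,N)$ on the sides and $-K\nu(u)-H|\nabla u|$ on top and bottom; combined with $k(\nabla u,\nu)$ and the coarea formula these reassemble into exactly $-H|\nabla u|+\pi(\nabla u,\nu)$ plus the geodesic curvature of $\partial\Sigma_t$, which is where the boundary dominant energy condition enters. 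The genuine analytic difficulty is not a compatibility condition for twisted data but regularity of the mixed problem at edges and vertices (reflection/regularisation, a blow-up argument along horizontal edges using \cite{AK80}, and $C^{1,\alpha}$ control near the vertical edges), needed even to justify the divergence theorem for $\Delta|\nabla u|$ on the polyhedron.

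Second, your rigidity step both overshoots and omits the mechanisms that actually yield the conclusion. Equality gives $\nabla\nabla u+|\nabla u|k=0$, $\mu=|J|$, and that the leaves are totally \emph{spacetime} geodesic free boundary MOTS; it does not give $|\nabla u|$ constant, nor totally geodesic leaves, nor flatness of $(M,g)$ (the theorem only asserts an isometric embedding into Minkowski space, with flat boundary, and indeed $k$ need not vanish). To reach the stated conclusion one needs: (a) Kato's inequality plus an ODE argument to show $\nabla u$ never vanishes, so the level sets really foliate $M$, and the stability theory for free boundary MOTS of \cite{ALY2} to conclude $R_{\Sigma_t}=0$ and $\kappa_{\partial\Sigma_t}=0$; (b) the cube's symmetry: repeating the construction for the three pairs of opposite faces and with reversed orientations gives $\Pi=k|_{T\partial M}=0$ on all six faces (hence $\partial M$ isometric to the boundary of a Euclidean rectangular prism, and $H=\tr_{\partial M}k=|\pi^T(\cdot,\nu)|=0$ there), while comparing the unit gradients of two such functions -- which can nowhere coincide, by the estimate $|\nabla(|X-Y|^2)|\le 2|k|\,|X-Y|^2$ -- upgrades $\mu=|J|$ to $\mu=|J|=0$; and (c) the embedding is not produced by a height function along the foliation: one glues $(M,g,k)$ to $(\mathbb{R}^3\setminus\tilde P,g_{Euc},0)$ across the now-isometric boundaries to obtain an asymptotically flat initial data set with corners having $E=|P|=0$, and then invokes the rigidity (equality) case of a spacetime positive mass theorem with corners (\cite{T}, \cite{Shibuya}) to embed the glued data, in particular $(M,g,k)$, into Minkowski space. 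Without (b) and (c) the argument from a single spacetime harmonic function cannot reach the theorem's conclusion.
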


As a corollary we obtain the following result. 
\begin{cor}\label{non-existence}(cf. \cite{Gro14})
Let $(M^3,g,k)$ be an initial data set of cube type.  Then $(M,g,k)$ cannot simultaneously satisfy: 
\begin{enumerate}
\item the dominant energy condition, 
\item the boundary dominant energy condition, 
\item all dihedral angles of $M$ are acute. 
\end{enumerate}
\end{cor}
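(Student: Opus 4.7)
The plan is to deduce the corollary as a direct strict-inequality consequence of Theorem \ref{maindihedral} via a short argument by contradiction. I would suppose, toward a contradiction, that there exists an initial data set $(M^3,g,k)$ of cube type simultaneously satisfying the dominant energy condition, the boundary dominant energy condition, and having every dihedral angle strictly less than $\pi/2$. Since the strict bound is a strengthening of the weak bound, all three hypotheses of Theorem \ref{maindihedral} are met verbatim.

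Invoking Theorem \ref{maindihedral}, the triple $(M,g,k)$ would then be isometrically embeddable into Minkowski space in such a way that $\partial M$ is mapped to the boundary of a Euclidean rectangular prism. However, any Euclidean rectangular prism has all of its dihedral angles equal to $\pi/2$. Since the dihedral angle between two adjacent faces meeting along a common edge is determined by the angle between the outward conormals with respect to $g$, it is an intrinsic invariant of the boundary geometry and must be preserved by any boundary isometry. Hence every dihedral angle of $(M,g)$ would be exactly $\pi/2$, contradicting the standing hypothesis that they are all strictly acute.

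The main obstacle is essentially cosmetic: one only needs to confirm that the identification between $\partial M$ and the boundary of the rectangular prism carries dihedral angles to dihedral angles, which is immediate from the definition of the dihedral angle and the fact that isometries preserve inner products of tangent vectors. No further analytic machinery is required beyond what Theorem \ref{maindihedral} already supplies; the corollary is of \emph{strict-to-rigid} type, and its proof is essentially a one-line deduction from the main rigidity theorem.
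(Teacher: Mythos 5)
There is a genuine gap at the bridging step of your contradiction. You claim that the dihedral angle between two adjacent faces ``is an intrinsic invariant of the boundary geometry and must be preserved by any boundary isometry.'' This is false: the dihedral angle along an edge is the angle, measured with the ambient metric $g$ on $M$, between the two faces (equivalently between their conormals, which are vectors in $TM$, not in $T\partial M$), and it is not determined by the induced metric on $\partial M$. A Euclidean wedge of \emph{any} opening angle $\theta$ has an intrinsically flat boundary (two flat half-planes glued along a line), so knowing that $\partial M$ is intrinsically isometric to the boundary of a Euclidean rectangular prism carries no information about the dihedral angles of $(M,g)$ along its edges. Consequently, the stated conclusion of Theorem \ref{maindihedral} --- isometric embeddability into Minkowski space with $\partial M$ \emph{intrinsically} isometric to the boundary of a box --- does not by itself force every dihedral angle of $(M,g)$ to equal $\pi/2$, and your contradiction does not follow from the statement of the theorem alone.

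The paper obtains the corollary not from the embedding conclusion but from the quantitative angle information produced inside the rigidity argument. Concretely, the integral formula of Lemma \ref{cubic integration formula}, combined with the dominant energy condition and the boundary dominant energy condition (which imply hypotheses (2) and (3) of Lemma \ref{PDEimplication}, since $|\pi(\cdot,\nu)|\ge|\pi(\nu,\nu)|=|\mathrm{tr}_{T}k|$ and $|\pi(\cdot,\nu)|\ge|\pi^{T}(\cdot,\nu)|$), yields part (1) of Lemma \ref{PDEimplication}: for a cube, $\sum_{j=1}^{4}\theta_j\ge 2\pi$ for the suprema of the dihedral angles along the four vertical edges, so these cannot all be strictly less than $\pi/2$; note this only needs the horizontal angles (between $T$ or $B$ and $F$) to be $\le\pi/2$, which acuteness supplies. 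Alternatively, part (2)(b) of the same lemma shows that under the weak bound the horizontal angles must be exactly $\pi/2$, again contradicting strict acuteness. Your proposal can be repaired by replacing the ``boundary isometry preserves dihedral angles'' step with a citation of this angle inequality (or of the exact-$\pi/2$ conclusion in Lemma \ref{PDEimplication}/Theorem \ref{cuberigidity}), rather than arguing from the statement of Theorem \ref{maindihedral} alone.
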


It is shown in \cite{Gro14} 
Section 4.9 that there exists a mean convex cubical domain with negative scalar curvature and strictly acute dihedral angles.  Hence,  Corollary \ref{non-existence} can be seen as a precise local characterization of the dominant energy condition.  

We now outline the proof.  First, we consider a solution $u$ to the following mixed boundary value problem.  
\begin{lma}\label{PDEcube2}
Given an initial data set $(M^3,g,k)$ of type $P$, where all dihedral angles are everywhere smaller than $\pi$,  then there exists a non-negative spacetime harmonic function $u\in C^{0,\alpha}(M) \cap C^{1,\alpha}_{loc}(M\setminus (\bar T \cup \bar B)) \cap C^{2,\alpha}_{loc}(M\setminus \bar \mE)\cap W^{3,p}_{loc}(\mathring M)$ such that 
\begin{enumerate}
\item $G_0(u):=\Lp u + K|\Na u|=0$ in $\mathring M$,
\item $u=0$ on $B$ and $u=1$ on $T$,
\item $\p_{\nu} u =0$ on $F$,
\end{enumerate}
where $K:=tr_gk$,  $\nu$ denotes the outward unit normal of $\p M$; $T, B, F$ and $\mE$ denote the top, the bottom, the side faces and the edges of $M$ respectively. 
\end{lma}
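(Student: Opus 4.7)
The plan is a regularization-and-fixed-point scheme along the lines of \cite{BHKKZ} and \cite{CK}, adapted to the mixed Dirichlet--Neumann polyhedral setting. The main obstacle will be the $C^{1,\alpha}$ boundary regularity on the Lipschitz polyhedron $M$, where the hypothesis that all dihedral angles are $<\pi$ is exactly what determines the best available H\"older exponent through the theory of mixed BVPs on polyhedral domains.

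First I would regularize the degenerate nonlinearity by setting $|\nabla u|_\epsilon := \sqrt{|\nabla u|^2 + \epsilon^2}$, so that the approximate PDE
\be
\Delta u + K |\nabla u|_\epsilon = 0
\ee
becomes uniformly elliptic and quasilinear. To produce a solution I would run a Schauder fixed point argument: given $v \in C^{1,\alpha}(M)$ with $0 \le v \le 1$, let $u = u[v]$ solve the linear mixed BVP
\be
\Delta u + K \tfrac{\nabla v}{|\nabla v|_\epsilon} \cdot \nabla u + K \tfrac{\epsilon^2}{|\nabla v|_\epsilon} = 0, \qquad u|_B = 0, \; u|_T = 1, \; \partial_\nu u|_F = 0.
\ee
The standard theory of mixed BVPs on polyhedra with all angles $<\pi$ gives $u \in C^{1,\alpha}(M)$ for some $\alpha > 0$ dictated by the worst dihedral angle, and the maximum principle yields $0 \le u \le 1$. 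Since the drift $K \nabla v / |\nabla v|_\epsilon$ is uniformly bounded by $\|K\|_\infty$, the $C^{1,\alpha}$-norm of $u[v]$ is controlled independently of $v$, so $v \mapsto u[v]$ is a compact self-map of a ball in $C^{1,\alpha'}(M)$ for any $\alpha' < \alpha$; Schauder's theorem then produces a fixed point $u_\epsilon$.

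Passing to the limit $\epsilon \to 0$ by Arzel\`a--Ascoli using the uniform $C^{1,\alpha}$ bounds then yields $u$ solving $\Delta u + K|\nabla u| = 0$ with the required boundary data. The regularity hierarchy follows stratum by stratum: $C^{2,\alpha}_{loc}$ up to the relative interior of each face by Schauder for pure Dirichlet or pure Neumann problems, $C^{1,\alpha}_{loc}$ across the open Neumann--Neumann edges via reflection (permitted precisely because the dihedral angle is $<\pi$), no better than $C^{0,\alpha}$ at the right-angle Dirichlet--Neumann edges $\bar T \cap F$ and $\bar B \cap F$ (the known sharp regularity at such mixed corners), and $W^{3,p}_{loc}$ in the interior by differentiating $\Delta u = -K|\nabla u|$ and using that $|\nabla u| \in W^{1,p}_{loc}$ as the Lipschitz image of $\nabla u \in W^{1,p}_{loc}$. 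The only place the dihedral hypothesis enters essentially is in securing the uniform $C^{1,\alpha}$ estimate on $M$; everything else is a routine assembly of fixed point, Schauder, and mixed BVP regularity.
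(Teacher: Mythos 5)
There is a genuine gap, and it sits exactly where your argument puts all its weight: the claimed uniform global $C^{1,\alpha}(M)$ estimate for the linear mixed problem. On a wedge where a Dirichlet face ($T$ or $B$) meets the Neumann face $F$ at interior angle $\omega$, the model solution behaves like $r^{\pi/(2\omega)}$; since the lemma only assumes all dihedral angles are $<\pi$ (it does \emph{not} assume the Dirichlet--Neumann edges are right angles -- that hypothesis appears only in the later rigidity statements), $\omega$ may lie in $(\pi/2,\pi)$ and the solution is then not even Lipschitz up to that edge. This is precisely why the lemma asserts only $C^{0,\alpha}(M)$ globally and $C^{1,\alpha}_{loc}$ away from $\bar T\cup\bar B$. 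Your own closing remark (``no better than $C^{0,\alpha}$ at the Dirichlet--Neumann edges'') contradicts the uniform $C^{1,\alpha}(M)$ bound on which both the compact self-map of a ball in $C^{1,\alpha'}(M)$ and the Arzel\`a--Ascoli passage $\epsilon\to 0$ rest, so the fixed-point scheme as written does not close. Moreover, the phrase ``standard theory of mixed BVPs on polyhedra'' conceals the actual technical content: proving solvability and regularity of the linearized mixed problem on the Lipschitz polyhedron is the heart of the matter, not a citation.

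For comparison, the paper's proof keeps the same regularization $\sqrt{\delta^2+|\nabla u|^2}$ (with a $-\delta K$ correction so the structure conditions of Gilbarg--Trudinger Chapter 8 hold) but works in weak function spaces: invertibility of the linear operator is proved by hand, via even reflection across the Neumann faces (reducing the right-angle cube to a Dirichlet problem on $T^2\times[0,1]$) and, for general prisms, local bi-Lipschitz straightening plus reflection, which yields only bounded measurable coefficients and forces a Lax--Milgram/Fredholm $W^{1,2}$ formulation. The $\delta$-independent a priori bounds are taken only in $C^{0,\alpha}(M)$ (De Giorgi--Nash--Moser type global H\"older estimates) and $W^{1,2}$; existence for each $\delta$ comes from the implicit function theorem plus a continuity/scaling argument; and the limit $\delta\to 0$ is taken in $C^{0,\beta}(M)$ with \emph{locally} uniform higher-order estimates away from the bad strata -- in particular, Lieberman's corner estimate gives $C^{1,\alpha}_{loc}$ at the vertical (Neumann--Neumann) edges, which is the one place the hypothesis ``all dihedral angles $<\pi$'' enters, while $W^{3,p}_{loc}$ in the interior follows from Kato's inequality and $L^p$ theory, as you correctly indicate. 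To repair your argument you would have to run the fixed point (or an implicit-function/continuity scheme) in a topology no stronger than $C^{0,\alpha}(M)\cap W^{1,2}$, and recover gradient control only locally away from $\bar T\cup\bar B$, which is essentially the paper's route.
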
 


The proof of Lemma \ref{PDEcube2} is based on a regularization and application of the implicit function theorem.  Then, under the assumptions of Theorem \ref{maindihedral}, we can see that $M$ is smoothly foliated by level sets of $u$.  In particular,  we show $M$ is foliated by stable free boundary MOTS.  We then apply the results of \cite{ALY2} Section 5 to study each level set using certain integral formulae for spacetime harmonic functions (\cite{HKK},\cite{HMT},\cite{CK},\cite{T}). Then, the flow generated by $\frac{\Na u}{|\Na u|^2}$ on $M$ is studied.  Finally, we can conclude the proof using the geometric
assumptions on $M$.  

As a corollary, we obtain an alternative proof of dihedral rigidity of parabolic prisms in hyperbolic space in the $3$ dimensional case. We note that the proof of Theorem \ref{theo.parabolic.cube} (Theorem 2.4 in \cite{Li2}) (which allows more general topology and works in dimension up to $7$) shows that $M$ is densely foliated by free boundary horospheres which minimize a certain functional and deduces the isometry by studying the Riemann curvature tensor.  With our trivial topological assumption on $M$, we can show that $M$ is smoothly foliated by free boundary horospheres and we can conclude the isometry by writing the metric in a split form and studying its properties.  

\

In terms of relating energy and geometry,  Theorem \ref{maindihedral} is for compact polyhedra while for asymptotically flat initial data sets, we have the spacetime positive mass theorem.  By recent works of Miao (\cite{M2}) and Miao and Piubello (\cite{MP}), which link dihedral angles to ADM energy, we have the following result which connects the local and the global pictures.  

\begin{thm}
Let $(M^n, g, k)$ be an asymptotically flat initial data set satisfying the dominant energy condition.  Let $\{P_k\}$ denote a sequence of Euclidean polyhedra satisfying
conditions in \cite{MP} Theorem 1.1.  Let $\vec{a}=a^i {\p_i}$, where $\sum_{i=1}^n (a^i)^2=1$, then 
\be
\begin{split}
\lim_{k\to\infty}\left( -\ \int_{\mF(\p P_k)} H  \,d \sigma+\ \int_{\mF(\p P_k)}\pi(\vec{a},\nu) \,d \sigma+\ \int_{\mE(\p P_k)} ( \alpha - \bar \alpha) \, d \mu \right) \ge 0, 
\end{split} 
\ee 
where $\mF(\p P_k)$ and $\mE(\p P_k)$ denote the faces and edges of $\p P_k$ respectively, $\nu$ is the outward unit normal with respect to $g$ on corresponding faces, $\alpha$ and $ \bar \alpha$ respectively denote the dihedral angles with respect to $g$ and $g_{Euc}$. 
\end{thm}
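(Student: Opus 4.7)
The plan is to reduce the inequality to the spacetime positive mass theorem. Specifically, I will identify the left--hand limit with $c_n(E+\vec{a}\cdot P)$, where $c_n>0$ is a dimensional constant and $(E,P)$ is the ADM energy--momentum vector of $(M,g,k)$; the conclusion then follows from $E\ge|P|$ (the spacetime PMT under the dominant energy condition) combined with Cauchy--Schwarz. The purely geometric half is supplied directly by Theorem~1.1 of \cite{MP}: under the stated hypotheses on $\{P_k\}$,
$$\lim_{k\to\infty}\lf(-\int_{\mF(\p P_k)}H\,d\sigma+\int_{\mE(\p P_k)}(\alpha-\bar\alpha)\,d\mu\ri)=c_n E.$$

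It therefore remains to identify $\lim_{k\to\infty}\int_{\mF(\p P_k)}\pi(\vec{a},\nu)\,d\sigma$ with $c_n\,\vec{a}\cdot P$. Set $\pi_{ij}:=k_{ij}-(\tr_g k)g_{ij}$; the ADM linear momentum is
$$\vec{a}\cdot P=c_n\lim_{R\to\infty}\int_{S_R}\pi(\vec{a},\nu)\,d\sigma.$$
To swap $S_R$ with $\p P_k$, I apply the divergence theorem on the region between them inside the asymptotic end. Because $\vec{a}$ has constant coefficients in the asymptotic coordinate chart, $\mathrm{div}(\pi(\vec{a},\cdot))=a^i\nabla^j\pi_{ij}$ modulo Christoffel corrections of faster decay; the momentum constraint $\nabla^j\pi_{ij}=J_i$ together with the asymptotic decay of the momentum density $J$ forces this interior contribution to vanish in the limit. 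Crucially, $\pi(\vec{a},\nu)$ is integrated only over the smooth faces $\mF(\p P_k)$, so no $(\alpha-\bar\alpha)$-type edge correction analogous to the energy case arises. This yields the desired identification.

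Combining both identifications, the left--hand side of the inequality equals $c_n(E+\vec{a}\cdot P)$ in the limit. Since $|\vec{a}|=1$, Cauchy--Schwarz gives $\vec{a}\cdot P\ge-|P|$, and the spacetime positive mass theorem provides $E\ge|P|$, so
$$E+\vec{a}\cdot P\ge E-|P|\ge 0,$$
proving the inequality.

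The main obstacle is the momentum identification: making rigorous the polyhedral flux computation for $\pi(\vec{a},\cdot)$. Concretely, one must combine the decay of $\pi$ and $J$ with the precise exhaustion conditions on $\{P_k\}$ imposed in \cite{MP}, and check carefully that the Lipschitz polyhedral boundary introduces no edge or corner contributions. Morally this is simpler than the energy computation of \cite{MP}, since $\pi(\vec{a},\nu)$ depends only on $\pi$ and $\nu$ and not on the second fundamental form of the face, but the technical verification in the AF setting is where the real work lies.
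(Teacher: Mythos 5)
Your proposal is correct and follows essentially the same route as the paper: identify the $H$ and dihedral-angle terms with $c(n)E$ via Theorem 1.1 of \cite{MP}, identify the flux term with $c(n)\langle\vec{a},P\rangle$, and conclude from $E\ge|P|$ together with $\langle\vec{a},P\rangle\ge-|P|$. The only difference is that the momentum identification, which you sketch by a divergence-theorem/constraint-equation argument and flag as the technical crux, is handled in the paper simply by invoking the well-definedness of the ADM energy--momentum on general exhaustions (Proposition 4.1 of \cite{Bartnik}).
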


Moreover, similar to an observation in \cite{Li1}, with Lohkamp's construction of $(\mu-|J|_g)>0$-islands (\cite{Lohkamp}), we can see dihedral rigidity implies the spacetime positive mass theorem. 

\

Finally, we show the dihedral rigidity of Riemannian cubes with charge by considering charged harmonic functions (\cite{BHKKZ}) and the properties of their electric fields. 
\begin{thm}\label{chargedEuclideanprism}
Let $(M^3,g,\mE)$ be a charged initial data set of type $P$, where $P_0$ is a rectangle, which simultaneously satisfies: 
\begin{enumerate}
\item the charged dominant energy condition, 
\item the charged boundary dominant energy condition,  
\item everywhere the dihedral angles between two faces of $M$ are less than or equal to $\pi/2$, 
\end{enumerate}
where $H$ is computed with respect to $\nu$, the unit outward normal of $M$.  Then,  $(M,g)$ is conformally equivalent to a Euclidean rectangular prism. Furthermore, $(M,g,\mE)$ can be isometrically embedded into the time slice of a Majumdar-Papapetrou spacetime. 
\end{thm}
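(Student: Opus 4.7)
The plan is to adapt the strategy of Theorem \ref{maindihedral} to the charged setting by replacing spacetime harmonic functions with the charged harmonic functions of \cite{BHKKZ}. First, I would establish an existence result analogous to Lemma \ref{PDEcube2}: a function $u$ on $M$, with the same regularity class, solving the charged elliptic equation $\Lp u = 2|\mE|_g |\Na u|$ (the natural Bochner-friendly variant suggested by the charged Jang calculation in \cite{BHKKZ}), subject to the mixed boundary conditions $u=0$ on the bottom face $B$, $u=1$ on the top face $T$, and the free boundary condition $\p_\nu u=0$ on the side faces $F$. The existence and regularity follow from the same regularization and implicit function theorem scheme used to prove Lemma \ref{PDEcube2}, with only the lower order nonlinearity replaced.

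Under the geometric hypotheses, $|\Na u|$ is positive throughout $M$, so the level sets $\Sigma_t=\{u=t\}$ provide a smooth foliation by free boundary surfaces meeting the side faces orthogonally. I would then apply the charged analog of the integral identity of \cite{HKK} obtained in \cite{BHKKZ}, integrated along the foliation. The resulting monotonicity formula for an appropriately weighted version of $\int_{\Sigma_t} (\cdots)\, d\sigma$ has an integrand which, after invoking the charged dominant energy condition, is a sum of nonnegative terms (a squared traceless second fundamental form, a squared tangential part of $\mE$, and a $\mu-|J|_g-|\mE|_g^2$ contribution). The boundary terms are controlled by the charged boundary dominant energy condition on $F$ together with the dihedral angle assumption, and an application of Gauss-Bonnet on the rectangular $\Sigma_t$ (using that the corners have angles at most $\pi/2$) forces the monotonicity integrand and the boundary deficits to vanish identically.

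The saturation equations then give: each level set is totally umbilical and a flat rectangle with right-angle corners meeting $\p M$ orthogonally, the electric field $\mE$ is aligned with $\Na u$, and a specific algebraic relation holds between $|\mE|_g$ and $|\Na u|$ that matches the constraint equations for a time slice of a Majumdar-Papapetrou spacetime. Running the flow generated by $\Na u/|\Na u|^2$, the metric acquires a split form $g=\phi^2 du^2 + g_t$ with $g_t$ evolving by a totally umbilical second fundamental form proportional to $|\mE|_g$; solving the resulting ODE for the conformal factor and introducing the potential $V$ such that $\mE=V^{-1}\Na V$, I would identify $(M,g)$ with a domain $(V^{-2}g_{Euc})$ over a Euclidean rectangular prism, where $V$ is harmonic on that prism with the boundary conditions inherited from $\p M$. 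This yields simultaneously the conformal equivalence to a Euclidean rectangular prism and the explicit isometric embedding into the Majumdar-Papapetrou spacetime.

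The main obstacle I anticipate is the final identification step. The pointwise saturation equations determine $g$, $k$-like data, and $\mE$ in terms of $u$ and $|\mE|_g$ along each fiber, but recognising these as a genuine Majumdar-Papapetrou time slice requires showing that the potential $V$ extends globally and is harmonic on the full prism (not merely a local ODE solution), and that $\mE$ is globally of the form $V^{-1}\Na V$. This involves carefully propagating the rigidity across the level sets and verifying that the free boundary Neumann condition for $u$ is compatible with the Dirichlet-Neumann split required of $V$ on the prism; the dihedral angle saturation at the edges is the key ingredient making this propagation work.
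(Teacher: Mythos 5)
There are genuine gaps, and the approach diverges from the paper's in ways that matter. First, the PDE you propose, $\Lp u = 2|\mE|_g|\Na u|$, is not the charged harmonic equation of \cite{BHKKZ}, which is $\Lp u - \la \mE,\Na u\ra = 0$ (the trace of the charged Hessian $\hat\Na\hat\Na u = \Na\Na u + \mE_i u_j + \mE_j u_i - \la\mE,\Na u\ra g_{ij}$). The integral identity you want to invoke (equation (8.7) of \cite{BHKKZ}), which produces exactly the bulk term $(R-2|\mE|^2)|\Na u|$ and the boundary term $2|\Na u|\la\mE,\nu\ra$ matched to the charged energy conditions, is derived for charged harmonic functions in this sense; for your equation there is no corresponding Bochner-type identity with favorable signs, and one cannot substitute $|\mE|$ for a trace of some $k$ since the spacetime-harmonic machinery needs the full tensor $k$, not just its trace. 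Second, your claimed saturation consequence that $\mE$ is aligned with $\Na u$ is false already in the model: in a Majumdar--Papapetrou time slice $g=e^{2h}g_{Euc}$ the potential $h$ is an arbitrary harmonic function, with no alignment between $\Na h$ and the coordinate-type function $u$, so the split-form ODE argument built on that alignment cannot go through. Relatedly, in the charged rigidity case the vanishing of the charged Hessian does not make the leaves totally geodesic or give $|\Na u|$ constant on leaves, so the foliation/flow strategy that worked for Theorem \ref{maindihedral} does not directly produce the prism structure here; indeed the paper explicitly avoids a foliation argument in this section.

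The obstacle you flag at the end --- globally producing a harmonic potential and recognizing the MP structure --- is precisely where the paper's proof supplies the missing ideas, and they are different from what you sketch. After the integral inequality plus Gauss--Bonnet (applied with charged harmonic functions solved for all three orientations of the cube, as in Theorem \ref{cuberigidity}) forces $\hat\Na\hat\Na u=0$, $R=2|\mE|^2$ in $M$, $H=2\la\mE,\nu\ra$ on $\p M$ and all dihedral angles equal to $\pi/2$, one obtains the pointwise identity $R_{ij}-|\mE|^2g_{ij}+\mE_i\mE_j+\Na_i\mE_j=0$. This shows $\Na\mE$ is symmetric, so $\mE$ is closed; since $H^1(M)=0$ and $\mE$ is divergence free, $\mE=\Na h$ with $h$ harmonic globally. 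The key final step is then a conformal change $\ti g=e^{-2h}g$, which has $\ti R_{ij}=0$, $\ti H=0$, and unchanged right dihedral angles, so the already-established Riemannian ($k\equiv 0$) rigidity result, Corollary \ref{Euclideanprism}, identifies $(M,\ti g)$ with a Euclidean rectangular prism, giving both the conformal equivalence and the MP embedding $g=e^{2h}g_{Euc}$, $\mE=\Na h$. Without the symmetry-of-$\Na\mE$ argument and the reduction to Corollary \ref{Euclideanprism}, your proposal does not close.
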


\

This text is organized as follows. In Section \ref{Preliminaries}, some definitions about initial data sets and Hamiltonian formulation are reviewed.  Integral formulae of spacetime harmonic functions on prisms are proved in Section \ref{Integral Formulae}. Dihedral rigidity for cubes in general initial data sets and prisms in hyperbolic spaces are proved in Section \ref{Dihedral Rigidity}.  The relation among polyhedra, dihedral rigidity and the spacetime positive mass theorem is discussed in Section \ref{localized polyhedra SPMT}. In Section \ref{Charged Riemannian cubes}, dihedral rigidity for charged Riemannian cubes is discussed. The proof of Lemma \ref{PDEcube2} will be given in Appendix \ref{Existence of spacetime harmonic functions 2} after a special case is discussed in Appendix \ref{Existence of spacetime harmonic functions}.  

\

\textbf{Acknowledgements.} 
The author would like to thank Xiaoxiang Chai for explaining \cite{CK} in detail. The author is grateful to Chao Li for patiently answering his questions. The author would also like to thank Demetre Kazaras for explaining \cite{HKK} clearly and Sven Hirsch for suggesting the problem of charged polyhedra and helpful comments on an earlier version of this manuscript. 

The author thanks Pak-Yeung Chan, Man-Chun Lee and Long-Sin Li for very helpful discussions.  The author would like to thank Prof. Pengzi Miao for kindly answering his questions and his expertise in hyperbolic spaces. Furthermore, the author wants to express gratitude to Prof. Martin Man-Chun Li, Prof. Connor Mooney, Prof. Richard Schoen and Prof. Luen-Fai Tam for their insight into the problem.  

\section{Preliminaries}\label{Preliminaries}
\begin{defn}(cf. \cite{Li0} Definition 1.1, \cite{Li1} Definition 1.4, 1.5 and \cite{Li2} Definition 2.1, 2.2) 
Let $P_0\subset \R^2$ be a convex Euclidean polygon and $P:=[0,1] \times P_0$.  An initial data set $(M^3,g,k)$ with non-empty boundary, where $g$ is a $C^{2,\alpha}(M)$ metric and $k$ is a $C^{1,\alpha}(M)$ symmetric (0,2)-tensor, is said to be of type $P$ if $M$ admits a Lipschitz diffeomorphism $\Psi: M\to P$ such that $\Psi^{-1}$ is smooth when restricted to the interior, the faces and the edges of $P$. Furthermore, 

\begin{enumerate}
\item $(P, g_{Euc})$  is called a Euclidean prism.  
\item Under the coordinate for $(\HH^3,g_{\HH})$ in Theorem \ref{theo.parabolic.cube}, $(P, g_{\HH})$ is called a parabolic prism.  
\end{enumerate}
\end{defn}

\begin{defn}\label{DEC}
Under constraint equations, we can define the mass density $\mu$ and the current density $J$ by
$$\mu=\frac{1}{2}(R_g+(tr_g k)^2-|k|_g^2)$$
and $$J=div_g \pi,$$ 
where $\pi=k-(tr_g k)g$ is the conjugate momentum tensor.  
An initial data set $(M,g,k)$ is said to satisfy the dominant energy condition if 
$$\mu\geq|J|_g.$$ 
\end{defn}

\begin{defn}\label{BDEC}(cf. \cite {ADLM} Definition 2.3)
An initial data set $(M,g,k)$ is said to satisfy the boundary dominant energy condition if 
$$H\geq|\pi(\cdot,\nu)|$$ 
on $\p M$, where the mean curvature $H$ is computed with respect to the unit outward normal $\nu$. 
\end{defn} 

\begin{defn}
A hypersurface $S\subset M$ is called a marginally outer trapped surface ($MOTS$) if on $S$, the outer null expansion 
$$\theta_+:=H+tr_{S}k=0;$$ 
a marginally inner trapped surface ($MITS$) if on $S$, the inner null expansion 
$$\theta_-:=H-tr_{S}k=0.$$ 
\end{defn}

\begin{defn} Let $n\geq 3$, an initial data set $(M^n, g, k)$ is called asymptotically flat if there exists a compact set $\mathcal{C}\subset M$ such that  $M\setminus \mathcal{C}=\coprod_{i=1}^k N_i$, where each end $N_i=\R^n\setminus B_{r_i}$ by a coordinate diffeomorphism under which
$$
g_{ij} = \delta_{ij} + O^2(|x|^{-q}),
$$
and 
$$
k_{ij}=O^1(|x|^{-q-1}),
$$
where $q>\frac{n-2}{2}$, $ \mu, J \in L^1(M)$ and for a function $f$ on $M$, $f = O^{m}(|x|^{-p})$ means $\sum_{|l|=0}^m||x|^{p+|l|}\partial^l f|$ is bounded near the infinity.

\

For each end, the ADM energy-momentum vector $(E,P)$ and the ADM mass $\mathfrak{m}$ \cite{ADM} are given by 
$$
E = \frac{1}{2c(n)} \lim_{r \to \infty} \int_{|x| = r}  ( g_{ij,i} - g _{ii, j} ) \nu^j,
$$ 
$$P_i:=\frac{1}{c(n)} \lim_{r \to \infty} \int_{|x| = r}  \pi_{ij} \nu^j, \,\ \,\ \,\ i=1,...,n,$$ 
and 
$$\mathfrak{m}=\sqrt{E^2-|P|^2},$$
where the outward unit normal $\nu$ and surface integral are with respect to the Euclidean metric and $c(n)=(n-1)|\mathbb{S}^{n-1}|$. 
\end{defn}

\begin{thm}\label{SPMT}(Spacetime positive mass theorem) 
Let $n\geq 3$ and let $(M^n,g,k)$ be an asymptotically flat initial data set that satisfies the dominant energy condition. Then 
$$E\geq|P|.$$
\end{thm}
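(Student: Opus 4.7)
The plan is to use the spacetime harmonic function method of Hirsch--Kazaras--Khuri--Zhang \cite{HKK,BHKKZ}, which is well suited to the three--dimensional case and matches the techniques used elsewhere in this paper; the case $n>3$ we would invoke from the classical Schoen--Yau/Witten results.

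First, fix an asymptotic end and a unit vector $\vec a=a^i\partial_i$ in the coordinates at infinity. Exhaust $M$ by compact regions $\Omega_L$, for instance coordinate balls of radius $L$ in the chart on that end, with the other ends capped off. On each $\Omega_L$ solve the spacetime harmonic equation
\be
\Delta u + K|\nabla u|=0, \qquad u\big|_{\partial\Omega_L}=a^i x^i,
\ee
with $K=\tr_g k$, and establish decay of $u-a^ix^i$ at infinity so that the solutions have a controlled limit $u_\infty$ on $M$.

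Second, apply the integral identity of \cite{HKK} on each $\Omega_L$. For a spacetime harmonic $u$ with $|\nabla u|>0$ one has, after integrating by parts using the Bochner formula and Gauss--Bonnet on the level sets $\Sigma_t=\{u=t\}$,
\be
\int_{\partial\Omega_L}\!\!\bigl(\partial_\nu|\nabla u|+ K\,\partial_\nu u\bigr)d\sigma
\;\ge\;\int_{\Omega_L}\!\!|\nabla u|\bigl(\mu-|J|_g\bigr)dV
+\tfrac12\!\int_{\Omega_L}\!\!\frac{|\overline{\nabla}^2 u|^2}{|\nabla u|}dV
+\!\int_{\mathbb R}\!2\pi\chi(\Sigma_t)\,dt,
\ee
where $\overline\nabla^2 u$ is the modified Hessian $\nabla^2 u+|\nabla u|\,k$. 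Under the dominant energy condition the first bulk term is non--negative; the Hessian term is manifestly non--negative; and since each $\Sigma_t$ for generic $t$ is a smooth surface whose components arise from a harmonic function on an asymptotically flat manifold, one controls $\chi(\Sigma_t)$ from below to discard the Euler characteristic contribution (exactly as in \cite{HKK}).

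Third, carry out the asymptotic computation of the boundary integral. With the boundary data $u=a^ix^i$, a direct expansion of $g_{ij}=\delta_{ij}+O^2(|x|^{-q})$ and $k_{ij}=O^1(|x|^{-q-1})$ on large coordinate spheres yields
\be
\lim_{L\to\infty}\int_{\partial\Omega_L}\!\bigl(\partial_\nu|\nabla u|+K\partial_\nu u\bigr)d\sigma \;=\;8\pi\bigl(E - a^iP_i\bigr),
\ee
so combining with the previous step gives $E-a\cdot P\ge 0$. Since $\vec a$ was an arbitrary unit vector, maximising over $\vec a$ yields $E\ge|P|$.

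The main obstacle is analytic rather than algebraic: one must produce a spacetime harmonic function on the non--compact manifold with the required asymptotics $u\sim a^ix^i$, prove sharp decay of $u-a^ix^i$ so that the boundary integral actually picks up $E-a\cdot P$ in the limit, and justify the level--set integral identity in the presence of a possibly non--empty critical set of $u$. These are precisely the delicate steps completed in \cite{HKK}, and once they are in place the three main terms on the right--hand side of the identity have fixed sign and the argument closes.
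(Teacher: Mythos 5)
The paper does not actually prove Theorem \ref{SPMT}: it is stated as a known black box, with the proof delegated to the citations \cite{EHLS} (the MOTS/stability proof for $3\le n<8$) and \cite{W},\cite{PT} (Witten's spinorial proof), so any genuine proof sketch is necessarily a different route from the paper. Your route --- the spacetime harmonic function method of \cite{HKK},\cite{BHKKZ} for $n=3$, falling back on the classical results for $n>3$ --- is a legitimate and well-matched alternative, and it is closer in spirit to the techniques this paper actually uses (it is also related to, but distinct from, the paper's Section \ref{localized polyhedra SPMT}, where the $3$-dimensional spacetime PMT is instead deduced from cubic dihedral rigidity plus Lohkamp's island construction). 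What your sketch buys is a proof whose ingredients (the integral identity, level-set Gauss--Bonnet, DEC sign) are exactly the ones exploited elsewhere in the paper; what the cited proofs buy is coverage of higher dimensions, which your method cannot reach --- note that for $n>3$ the correct references are precisely \cite{EHLS} (no spin assumption, $n<8$) and Witten/Parker--Taubes (spin), not the time-symmetric Schoen--Yau results. Two technical points in your outline should be adjusted to match \cite{HKK}: first, one does not simply cap off the other ends --- one passes to the (generalized) exterior region whose boundary consists of MOTS/MITS, because the signs of the inner boundary terms and the vanishing of $H_2(M_{ext},\partial M_{ext})$ are what control the topology of the level sets; second, the Euler characteristic term must be bounded \emph{above}, not below: by Gauss--Bonnet the relevant quantity is $2\pi\chi(\Sigma_t)$ minus the geodesic curvature of $\partial\Sigma_t$ on the large coordinate sphere, which contributes roughly $2\pi$ per end of $\Sigma_t$, and it is the combination of $\chi(\Sigma_t)\le 1$ per component with this boundary contribution that makes the term harmless. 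With those corrections your plan is the HKK proof, and the remaining analytic steps (existence and asymptotics of $u\sim a^ix^i$, the boundary limit producing $E-\langle \vec a,P\rangle$ up to the normalizing constant, and the treatment of the critical set) are indeed the content of \cite{HKK}.
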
 We refer readers to \cite{EHLS} and \cite{W} (\cite{PT}) for its proof. 

\begin{defn}(\cite{HKK} Section 3)
The spacetime Hessian tensor is defined by 
\be 
\o \Na\o \Na u=\Na \Na u +|\Na u|k. 
\ee 

A function $u$ on $M$ is called spacetime harmonic if 
\be 
\o \Lp u:=tr_g \o\Na \o\Na u=\Lp u+(tr_g k)|\Na u|=0.
\ee
\end{defn}

\subsection{Hamiltonian formulation (Hamilton-Jacobi analysis)}
Let $(\Omega^n,g,k)$ be a compact initial data set with boundary $\S$.  A spacetime $(N^{n+1},\bar g)$ with boundary $\bar \S$ can be constructed by infinitesimally deforming the initial data set $(\Omega,g,k,\S)$ in a transversal, timelike direction $\p_t= V \vec{n} + W^i\p_i$ which satisfies $\bar\Na_{\p_t}\p_t=1$, where $V$ is the lapse function, $\vec{n}$ is the timelike unit normal of $\Omega$ in $N$ and $W$
is the shift vector. Further assume that $\Omega$ meets
$\bar \S$ orthogonally. The purely gravitational contribution $\mathcal{H}_{grav}$ to the total
Hamiltonian at the slice $\Omega$ is given by (\cite{ADM},\cite{RT},\cite{BY},\cite{HH})
\be \label{Ham}
c(n) \mathcal{H}_{grav}(V,W) = \int_{\Omega} (\mu V  + \la J,W \ra) - \int_{\S}(HV-\pi(\nu,W)),
\ee
where $H$ is the mean curvature of $\S$ with respect to the outward normal of $\Omega$ and $\pi$ is the conjugate momentum tensor.  From this, we can expect that the contribution to the boundary geometry is from the mean curvature $H$ and the 1-form $\pi(\nu,\cdot)$ An interesting difference between \eqref{Ham} and \eqref{CUBE} is that they are related to a timelike vector field and a null vector field respectively.  

\section{Integral Formulae}\label{Integral Formulae}
The following integral formula links the interior energy condition and the boundary behaviour of an initial data set. 
\begin{lma}\label{IFC}(cf. \cite{HKK} Proposition 3.2) 
Let $(M^3,g,k)$ be an initial data set of type $P$, where all dihedral angles are everywhere smaller than $\pi$. Further assume that the dihedral angles between $T$ and $F$ and those of $B$ and $F$ are everywhere less than or equal to $\pi/2$.  Then,  for a spacetime harmonic function $u$ in Lemma \ref{PDEcube2}, 
\be\label{base}
\begin{split}
&\ \int_{M} \frac12  \frac{|\o \Na \o \Na u|^2}{| \nabla u |} + \mu|\Na u| + \la J, \Na u\ra  \, d V\\
\le & \ \int_{\p_{\neq0} M} \p_\nu | \nabla u | \, d \sigma +\int_{\p M} k(\Na u, \nu) d \sigma+ \frac{1}{2}\int_{0}^{1} \int_{\S_t} R_{\S_t} dA  d t, 
\end{split} 
\ee
where $\p_{\neq0}M=\{x\in \p M\,\ |\,\ |\Na u| \neq 0\}$, $\S_t=\{u=t\}$ and $\nu$ is the outward unit normal on $\p M$.  
\end{lma}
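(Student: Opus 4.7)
The plan is to adapt the Bochner-based integral identity of Hirsch-Kazaras-Khuri (\cite{HKK}, Proposition 3.2) to the prism setting, where one must carefully incorporate boundary and edge contributions arising from the faces $T$, $B$, and $F$. The pointwise starting point is the standard Bochner-Lichnerowicz identity, which, after dividing by $|\nabla u|$ and moving one term to the left, reads
\begin{equation*}
|\nabla u|\Delta|\nabla u| = |\nabla^2 u|^2 - |\nabla|\nabla u||^2 + \mathrm{Ric}(\nabla u,\nabla u) + \langle \nabla\Delta u,\nabla u\rangle
\end{equation*}
at points where $|\nabla u|\neq 0$. Using the spacetime harmonic equation $\Delta u = -K|\nabla u|$, the last term contributes $-|\nabla u|\langle \nabla K,\nabla u\rangle - K\langle \nabla|\nabla u|,\nabla u\rangle$; I would move the latter to the left-hand side so that it becomes the tangential derivative term to be controlled after integration.

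Next, to introduce the spacetime Hessian $\overline\nabla\overline\nabla u = \nabla^2 u + |\nabla u|k$, I expand $|\overline\nabla\overline\nabla u|^2 = |\nabla^2 u|^2 + 2|\nabla u|\langle\nabla^2 u,k\rangle + |\nabla u|^2|k|^2$ and apply the refined Kato inequality $|\nabla^2 u|^2 - |\nabla|\nabla u||^2 \ge |\nabla u|^2|A_{\Sigma_t}|^2 + |\nabla^T|\nabla u||^2$, coming from decomposing $\nabla^2 u$ into tangential and normal parts with respect to the regular level set $\Sigma_t = \{u=t\}$; this is what produces the coefficient $\tfrac12$. I then invoke the Gauss equation $\mathrm{Ric}(n,n) = \tfrac12(R_g - R_{\Sigma_t} + H^2 - |A_{\Sigma_t}|^2)$, use the decomposition $R_g = 2\mu - K^2 + |k|^2$ from Definition \ref{DEC} to bring $\mu$ into the expression, and replace $\langle\nabla K,\nabla u\rangle$ by $-\langle J,\nabla u\rangle + (\mathrm{div}\,k)(\nabla u)$ via the contracted second Bianchi identity $J_i = \nabla^j k_{ij} - \nabla_i K$. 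The identity $H = K + n(|\nabla u|)/|\nabla u|$, obtained by tracing the Hessian decomposition on $\Sigma_t$, is used to absorb the ambient mean curvature terms cleanly into derivatives of $|\nabla u|$.

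For the integration, I would integrate this pointwise identity over $M$ and apply the divergence theorem to $\int_M \Delta|\nabla u|\,dV$, which produces $\int_{\partial M}\partial_\nu|\nabla u|\,d\sigma$. The term $(\mathrm{div}\,k)(\nabla u)$ combines with the cross term $\langle\nabla^2 u,k\rangle$ through the identity $\mathrm{div}(k(\nabla u,\cdot)) = (\mathrm{div}\,k)(\nabla u) + \langle \nabla^2 u,k\rangle$, yielding exactly the boundary contribution $\int_{\partial M} k(\nabla u,\nu)\,d\sigma$ after another divergence theorem. The remaining curvature term involving $R_{\Sigma_t}|\nabla u|$ is converted to $\tfrac12\int_0^1\int_{\Sigma_t} R_{\Sigma_t}\,dA\,dt$ via the coarea formula, since the smooth range of $u$ fills $M$ generically. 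All terms then assemble into the stated inequality.

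The main obstacle is making this argument rigorous in view of the low regularity of $u$ near the critical set $\{\nabla u = 0\}$ and near the edges $\bar{\mathcal E}$, since Lemma \ref{PDEcube2} only provides $C^{1,\alpha}$ regularity on $M\setminus(\bar T\cup\bar B)$ and $C^{0,\alpha}$ at the corners. I would regularize by replacing $|\nabla u|$ with $\phi_\varepsilon = (|\nabla u|^2+\varepsilon)^{1/2}$, carry out the integration by parts on an exhaustion of $M\setminus\bar{\mathcal E}$, and pass $\varepsilon\to 0$ using dominated convergence, with the restriction to $\partial_{\neq 0}M$ in the boundary integral emerging naturally in the limit. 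The hypothesis that the dihedral angles between $T$ and $F$ and between $B$ and $F$ are at most $\pi/2$ is precisely what is needed to control the sign of the edge contributions that survive as $\varepsilon\to 0$: along these edges $\nabla u$ meets the boundary of the level set in a configuration where the acute dihedral constraint forces the limiting line integrals to contribute non-positively, absorbing into the integrated inequality rather than obstructing it.
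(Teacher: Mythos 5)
Your interior computation (Bochner identity, spacetime harmonic equation, expansion of $|\overline\nabla\overline\nabla u|^2$, Gauss equation, coarea formula) is essentially the standard Hirsch--Kazaras--Khuri derivation, which the paper simply cites; that part is not where the content of this lemma lies. The actual difficulty, and the substance of the paper's proof, is justifying the global integration by parts $\int_{\partial M}\partial_\nu|\nabla u|\,d\sigma=\int_M\Delta|\nabla u|\,dV$ on a polyhedral domain where $u$ is only $C^{0,\alpha}$ at the horizontal edges and vertices and $C^{1,\alpha}$ along the vertical edges. Your proposal handles this with a single sentence asserting that ``the acute dihedral constraint forces the limiting line integrals to contribute non-positively,'' and this is a genuine gap: you give no identification of what those edge terms are, no computation of their sign, and in fact no sign mechanism exists. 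On the rounded-off edge regions of an exhaustion $M_r$, the integrand $\partial_{\nu_r}|\nabla u|$ is only controlled by $|\nabla\nabla u|$ and has no definite sign, so an inequality cannot be salvaged by discarding a favorable term.

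What the paper actually proves is that the edge and vertex contributions \emph{vanish} in the limit, and the hypothesis that the dihedral angles along $T\cap F$ and $B\cap F$ are at most $\pi/2$ enters through regularity, not sign: a blow-up argument (Proposition \ref{EH}) shows $r|\nabla\nabla u|\to 0$ uniformly near the horizontal edges, where the case of angle strictly less than $\pi/2$ is handled by the $C^{1,\alpha}$ regularity of \cite{AK80} plus scaled Schauder estimates, and the case of angle exactly $\pi/2$ is handled by showing the blow-up limit is linear via a boundary Harnack argument. Near the vertical edges and vertices one needs the scaling estimates $|\nabla\nabla u|\le Cr^{\alpha-1}$ and $Cr^{\alpha-2}$ (which also give integrability of $|\nabla u|$ on $\partial M$, needed even to define the boundary integral), and the final passage to the limit uses the pointwise bound $\Delta|\nabla u|\ge -C(\|g\|_{C^2}+\|k\|_{C^1})|\nabla u|$ to make $(\Delta|\nabla u|)_-$ integrable so that monotone convergence applies. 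None of these ingredients appear in your proposal; without them the exhaustion-plus-dominated-convergence step you describe cannot be carried out, so the proof as written does not close.
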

\begin{proof}
We here assume that $|\Na u|\neq 0$ for the simplicity of presentation.  For the full generality, one should first consider $\sqrt{|\Na u|^2+\delta^2}$ for $\delta>0$ and then take limit as $\d\to 0$ (see \cite{S},\cite{BS},\cite{HKK},\cite{HMT} $Remark$ 3.3).  

\

It suffices to verify the divergence theorem such that the following holds since the remaining would be the same as in the references aforementioned (application of Bochner formula, Gauss equation and coarea formula). 
\be\label{divergence}
\begin{split}
\int_{\p M} \p_{\nu} |\Na u| = \int_{M} \Lp |\Na u|. 
\end{split}
\ee

\

Let $\{M_r\}_{r>0}$ be an exhaustion of $M$ with vertices and edges of $M$ being smoothed out, where $r$ is the parameter of radius of spherical cap around the vertices and rounded-off cylinders along the edges. The functions are regular enough on $M_r$ so that the divergence theorem can be applied. 
\be\label{exhaustion}
\begin{split}
\int_{\p M_r} \p_{\nu_r} |\Na u| = \int_{M_r} \Lp |\Na u|. 
\end{split}
\ee

{\bf 1.} \underline{L.H.S. of \eqref{divergence}.} To show that $\int_{\p M} \p_{\nu} |\Na u|$ is well-defined, we consider the following. 
\begin{prop}\label{BoundaryTerms}(cf.  \cite{HMT} Proposition 2.2,  \cite{CK},\cite{T})
Let $\S$ be a face of a type $P$ initial data set $(M^3,g,k)$,  for $|\Na u|>0$,
\begin{enumerate}
\item
if $u=constant$ on $\Sigma$, 
\be
\begin{split}
 \p_\nu | \nabla u  |= -K\nu(u)- H |\Na u|; 
\end{split}
\ee
\item
if $\p_{\nu}u=0$ on $\S$, 
\be
\begin{split}
 \p_\nu | \nabla u |
 =& -|\Na u| \Pi (\frac{\Na u}{|\Na u|}, \frac{\Na u}{|\Na u|}),
 \end{split}
\ee
where $\Pi$ is the second fundamental form with respect to the outward normal $\nu$.  
\end{enumerate}
\end{prop}

\begin{proof}
Let $\eta=u|_{\S}$, 
\be
\begin{split}
\p_\nu|\Na u|=&\frac{\Na \Na u(\Na u, \nu)}{|\Na u|}\\
=&\frac{\nu (u)}{|\Na u|}\Na \Na u (\nu, \nu)+\frac{1}{|\Na u|}\Na \Na u (\Na_\S \eta, \nu)\\
\end{split}
\ee

\vh

Using $ \Delta_Mu = -K |\Na u| $, we have
\be
\nabla \Na u ( \nu, \nu) = \Lp_M u- H \nu(u)  - \Delta_{\S} \eta= -K|\Na u|- H \nu(u) - \Delta_{\S} \eta. 
\ee
We also have, 
\be
\begin{split}
\Na \Na u (\Na_\S \eta, \nu)=&(\Na_\S \eta)(\nu(u))-(\Na_{\Na_\S \eta} \nu)(u)\\
=&(\Na_\S \eta)(\nu(u))-\la \Na_{\Na_\S \eta} \nu, \Na_\S \eta \ra+\nu(u) \la \Na_{\Na_\S \eta} \nu, \nu \ra\\
=&(\Na_\S \eta)(\nu(u))-\Pi (\Na_\S \eta, \Na_\S \eta),\\
\end{split}
\ee

\vh

Thus, we have, 
\be\label{MM}
\begin{split}
 \p_\nu | \nabla u  |= & \ -\frac{\Pi (\Na_\S \eta, \Na_\S \eta)}{|\Na u|}    +  \frac{ (\Na_\S \eta) (\nu (u))}{|\Na u|} \\
 & -K\nu(u)- H \frac{|\nu (u)|^2}{|\Na u|} - \frac{\nu(u)}{|\Na u|}\Delta_{_\S} \eta . 
\end{split}
\ee
Hence,  if $u=constant$ on $\Sigma$, 
\be\label{MM2}
\begin{split}
 \p_\nu | \nabla u  |= -K\nu(u)- H |\Na u|. 
\end{split}
\ee
If $\p_{\nu}u=0$ on $\S$, 
\be\label{MM3}
\begin{split}
\p_\nu | \nabla u  |= & \ -\frac{\Pi (\Na_\S \eta, \Na_\S \eta)}{|\Na u|}\\
=& -|\Na u| \Pi (\frac{\Na u}{|\Na u|}, \frac{\Na u}{|\Na u|}). 
\end{split}
\ee
\end{proof}

From Proposition \ref{BoundaryTerms} above, we have on $\p M$, 

\be
\begin{split}
\int_{\p M} \p_{\nu} |\Na u| = \int_{F} -|\Na u| \Pi (\frac{\Na u}{|\Na u|}, \frac{\Na u}{|\Na u|})+\int_{T\cup B} -K\nu(u)- H |\Na u|.  
\end{split}
\ee

In particular,  on $\p M$, 
\be
\begin{split}
|\p_{\nu} |\Na u| |\leq C(||g||_{C^1}+||k||_{C^{0}})|\Na u|.  
\end{split}
\ee

Therefore, for the well-definedness of $\int_{\p M} \p_{\nu} |\Na u|$, it suffices to check if $|\Na u|$ is integrable on $\p M$.  Let $p \in \bar E$, w.l.o.g., identified as $0$ in a local coordinate chart.  From the fact that $u\in C^{0,\alpha}(M)$, apply $W^{2,p}$ estimate followed by Sobolev embedding onto $w_r(x)$, where $r>0$ fixed, in a (conic) annulus $A(1)$ around $p$, we have 
\be 
|\Na u|_{C^{0}(A(r))}\leq |\Na \Na u|_{L^p(A(r))}\leq C r^{\alpha-1}. 
\ee 
$|\Na u|$ is therefore integrable on $\p M$ and also on $M$.  

\

{\bf 2.} \underline{$\int_{\p M_r} \p_{\nu_r} |\Na u| \to \int_{\p M} \p_{\nu} |\Na u|.$} First, let's consider the convergence along the horizontal edges by a blow-up argument.  
\begin{prop}\label{EH}
Let $W$ be a compact neighbourhood along the interior of an horizontal edge $E_H$ and $r(p)=dist (p, E_H)$.  We have $r|\Na \Na u|\xrightarrow[]{} 0$ uniformly in $W$ as $r\to 0$.  
\end{prop}
\begin{proof}
Assume on the contrary that $r|\Na \Na u|$ does not converge to 0 uniformly in $W$ as $r\to 0$. Hence, there exists $\eps_0>0$ and a sequence $\{p_i\}$ in $W$ with 
\be\label{contradiction}
L_i |\Na \Na u|(p_i)\geq \eps_0,
\ee
where $L_i=r(p_i)$ and $L_i \to 0$. 

Denote the point on $E_H$ closest to $p_i$ by $q_i$. 
Let $p$ denote a subsequential limit of $p_i$ and hence also the limit of $q_i$. W.l.o.g., we still denote the subsequence by $\{p_i\}$.  

Then, on a ball (intersecting with a wedge) denoted by $B_1$, define a sequence of functions $u_i$ by scaling around each $q_i$, that is,
\be \label{scaling}
u_i(x):= \frac{u(q_i+L_i x)}{L_i}.
\ee 

Check that for each $i$, 
\begin{enumerate}
\item $u_i(0)=0$, 
\item $\p u_i(x)=(\p u) (q_i+L_i x)$, and 
\item$\p\p u_i(x)=L_i \p\p u(q_i+L_i x)$. 
\end{enumerate} Thus,  
\be \label{scaledPDE}
\Lp_i u_i (x)=L_i \Lp_i u(q_i+L_i x)= - L_i K(q_i+L_i x) |\nabla_i u (q_i+L_i x)| 
\ee since $u$ is spacetime harmonic, 
where $\Na_i$ and $\Lp_i$ respectively mean the connection and the Laplace operator with respect to the pull-back metric $g_i$ from $\phi_i: B_1\to (\mathcal{N}_{L_i}(q_i)\subset M, g)$.  Note that, $g_i\to g(p)$.   

\

For regularity of $u$ on $W$, one reflects the domain along the corresponding side face, then by \cite{Trudinger} and \cite{LiWang}, we know that $u$ is uniformly Lipschitz on $W$.  

Then for $u_i$, since $u$ is uniformly Lipschitz, $u_i \to v$ in some $C^{0,\alpha}$ norm, while $v$ itself is still a Lipschitz function. Moreover, R.H.S of \eqref{scaledPDE} $\to 0$ as $L_i \to 0$.  
Furthermore, $u_i \to v$ in $C^{2}_{loc}$ away from the edge since $u$ itself is $C^{2,\alpha}_{loc}$ away from the edge. 

Therefore, we have $v$ satisfying 
$\Lp_{g_{Euc}} v = 0$ and mixed boundary condition on a model wedge with angle $\theta$.  
Furthermore, from \eqref{contradiction} there exists a point $y$ with distance 1 away from $p$ such that, 
\be\label{contradiction2}
|\p \p v(y)| \geq \eps_0. 
\ee

\

There are 2 cases.  First, if $\theta$ is less than $\pi/2$, then in $M$, $p$ lies on a segment where the dihedral angle is less than $\pi/2$.  Hence, there exists an open neighbourhood $U$ of $p$ which 
\begin{enumerate}
\item sits along a segment where the dihedral angle is less than $\pi/2$, 
\item contains a compact set $V$ containing $p_i$ for all large $i$, and hence $p$. 
\end{enumerate}

By \cite{AK80}, we know that $u\in C^{1,\alpha}(V)$, then by Schauder estimate on $w_r(x)$ in an annulus $A(1)$ around $p$, we get $r|\Na \Na u|\leq C(V)r^{1+(1+\alpha)-2}\leq C(V)r^{\alpha}$ 
$\to 0$ as $r\to 0$.  This contradicts \eqref{contradiction}. 

While for the second case,  if $\theta=\pi/2$, then $\p \p v$ should vanish as $v$ should be linear by boundary Harnack inequality.  This contradicts \eqref{contradiction2}. 
\end{proof}


What follows is based on a remark in the proof of Theorem 1.4 in \cite{Li0}. Meanwhile,  for the vertical edges of $F$ along which $u$ is $C^{1,\alpha}_{loc}$, when Schauder estimate is applied on $w_r(x):=u(rx)-u(0)$ in $A(r) \subset \ti \O$, a compact neghbourhood along the segment,  we have $|\Na \Na u|_{C^{0}(A(r))}\leq  C(\ti \O) r^{\alpha-1}$.  From this, the integrability of Gauss curvature and geodesic curvature for each level set also follows.  With Gauss-Bonnet Theorem, it will be useful to show Lemma \ref{cubic integration formula} which connects energy conditions with dihedral angles. 

\

We then consider for $p$ being a vertex, by Schauder estimate on $A(r)$ around $p$, we have 
\be 
|\Na \Na u|_{C^{0}(A(r))}\leq C r^{\alpha-2}. 
\ee 

Note that as $\p M_r$ approaches the vertices, the difference in the area is of order $r^2$.  To sum up, as $r\to 0$,
\be \label{boundary integral convergence}
\begin{split}
\int_{\p M_r} \p_{\nu_r} |\Na u| \to \int_{\p M} \p_{\nu} |\Na u|. 
\end{split}
\ee

{\bf 3.} \underline{R.H.S. of \eqref{divergence}.}
\be
\begin{split}
&\Lp |\Na u|\\
=&\frac{1}{|\Na u|}({|\Na \Na u|^2}+\la \Na u, \Na \Lp u \ra -|\Na|\Na u||^2+|\Na u|^2Ric(\nl,\nl)). 
\end{split}
\ee
As $\Lp u= -K|\Na u|$, first note that by Lemma 3.1 in \cite{HKK}, we have
\be
\begin{split}
\Lp|\Na u| \geq &-C(||g||_{C^2}+||k||_{C^1})|\Na u|. 
\end{split}
\ee 
In particular,  
\be
\begin{split}
(\Lp|\Na u|)_- \leq C(||g||_{C^2}+||k||_{C^1})|\Na u|, 
\end{split}
\ee 
i.e.  $(\Lp|\Na u|)_-$ is integrable on $M$.  

\

{\bf 4.} \underline{Conclusion.}
By \eqref{exhaustion} and integrability of $(\Lp|\Na u|)_-$,  we get 
\be
\begin{split}
\int_{M_r} (\Lp|\Na u|)_+= \int_{\p M_r} \p_{\nu_r} |\Na u| + \int_{M_r} (\Lp|\Na u|)_-. 
\end{split}
\ee
We can thus by \eqref{boundary integral convergence} and monotone convergence theorem conclude that 
\be
\begin{split}
\int_{\p M} \p_{\nu} |\Na u| = \int_{M} \Lp |\Na u|. 
\end{split}
\ee

\end{proof}

Then by Lemma \ref{PDEcube2}, Lemma \ref{IFC} and Proposition \ref{BoundaryTerms}, we can conclude the following which links energy conditions to dihedral angles.  
\begin{lma}\label{cubic integration formula} Let $(M^3,g,k)$ be an initial data set of type $P$ where the dihedral angles are everywhere less than $\pi$. Further assume that the dihedral angles between $T$ and $F$ and those of $B$ and $F$ are everywhere less than or equal to $\pi/2$.  Let $u$ be a spacetime harmonic function in Lemma \ref{PDEcube2},  we have 
\be\label{CUBE} 
\begin{split}
&\ \int_{M} \frac12  \frac{|\o \Na \o \Na u|^2}{| \nabla u |} + \mu|\Na u| + \la J, \Na u\ra  \, d V\\
&+ \ \int_{\p M} H |\Na u|-\pi(\Na u,\nu) \,d \sigma\\
\leq& \int_{0}^{1} \int_{\S_t} \frac{1}{2}R_{\S_t} dA\,  d t+ \ \int_{0}^1 \int_{\p\S_t}\kappa \, d\tau \,dt \\
= & \int_{0}^{1} \left(  2\pi\chi(\S_t) - \sum_{j=1}^q (\pi-\alpha_j) \right)\,d t.\\
\end{split} 
\ee
where $\S_t=\{u^{-1}(t)\}$, $q$ denotes the number of sides of $P_0$, $\alpha_j$ is the dihedral angle between the edges of the level sets and $H$ is the mean curvature computed with respect to $\nu$, the outward unit normal of $M$. 
\end{lma}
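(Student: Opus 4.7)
The plan is to start from the base inequality in Lemma \ref{IFC} and convert every boundary term on $\partial M$ into either (a) an expression that cancels against $H|\nabla u| - \pi(\nabla u,\nu)$ on the horizontal faces $T,B$, or (b) the geodesic curvature integrand on the vertical face $F$. After this rewriting, a coarea decomposition plus Gauss--Bonnet on each level set $\Sigma_t$ will yield the stated equality on the right side.

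Concretely, rewrite the inequality of Lemma \ref{IFC} by adding $\int_{\partial M}(H|\nabla u| - \pi(\nabla u,\nu))\,d\sigma$ to both sides. On the right, using $\pi = k - (\mathrm{tr}_g k)g$, the boundary integrand becomes
\be
\partial_\nu|\nabla u| + H|\nabla u| + K\langle\nabla u,\nu\rangle,
\ee
since $k(\nabla u,\nu) - \pi(\nabla u,\nu) = K\langle\nabla u,\nu\rangle$. On $T$ and $B$ the function $u$ is locally constant, so $\nabla u = \nu(u)\,\nu$ and Proposition \ref{BoundaryTerms} gives $\partial_\nu|\nabla u| = -K\nu(u) - H|\nabla u|$; hence the integrand above vanishes identically on $T\cup B$. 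On $F$ one has $\partial_\nu u = 0$, so $\langle\nabla u,\nu\rangle = 0$, and Proposition \ref{BoundaryTerms} reduces the integrand to $H|\nabla u| - |\nabla u|\,\Pi\!\left(\tfrac{\nabla u}{|\nabla u|},\tfrac{\nabla u}{|\nabla u|}\right)$. Splitting $\Pi$ in the orthonormal frame $\{\tfrac{\nabla u}{|\nabla u|},\tau\}$ on $F$, where $\tau$ is the unit tangent to $\partial\Sigma_t$, gives $H = \Pi(\tau,\tau) + \Pi(\tfrac{\nabla u}{|\nabla u|},\tfrac{\nabla u}{|\nabla u|})$, so the integrand collapses to $|\nabla u|\,\Pi(\tau,\tau) = |\nabla u|\,\kappa$, where $\kappa$ is the geodesic curvature of $\partial\Sigma_t$ as a curve in $\Sigma_t$ (the free boundary condition $\partial_\nu u = 0$ makes $\nu$ the conormal of $\partial\Sigma_t\subset\Sigma_t$, so $\kappa = \langle \nabla_\tau\tau,\nu\rangle = \Pi(\tau,\tau)$).

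By the coarea formula, $\int_F |\nabla u|\,\kappa\,d\sigma = \int_0^1\!\int_{\partial\Sigma_t}\kappa\,d\tau\,dt$, which combined with the $\frac12\int_0^1\!\int_{\Sigma_t}R_{\Sigma_t}$ term from Lemma \ref{IFC} produces the first displayed equality in \eqref{CUBE}. For the second equality, observe that by the type $P$ assumption with all dihedral angles less than $\pi$ and by the free boundary foliation picture coming from Lemma \ref{PDEcube2}, each $\Sigma_t$ is a topological disk with polygonal boundary having $q$ vertices, where $q$ is the number of sides of $P_0$, and its interior angles $\alpha_j$ coincide with the dihedral angles between the vertical faces of $M$ along the corresponding vertical edges. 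Since $\tfrac12 R_{\Sigma_t} = K_{\Sigma_t}$ on a surface, the Gauss--Bonnet formula for a polygon
\be
\int_{\Sigma_t} K_{\Sigma_t}\,dA + \int_{\partial\Sigma_t}\kappa\,d\tau + \sum_{j=1}^q (\pi - \alpha_j) = 2\pi\chi(\Sigma_t)
\ee
gives the claimed second equality after integrating in $t$.

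I expect the mild obstacle to lie not in the algebraic manipulations above but in justifying them under the limited regularity of $u$: one must ensure that the coarea splitting, the polygonal structure of $\Sigma_t$, and Gauss--Bonnet (with its corner correction) are applicable for a.e.\ $t$. The Schauder/blow-up analysis already carried out in the proof of Lemma \ref{IFC} (Proposition \ref{EH} plus the remark on vertical edges) provides exactly the integrability of $|\nabla\nabla u|$ near the edges needed for $\int K_{\Sigma_t}$ and $\int \kappa$ to be finite, and the identification of corner angles with the ambient dihedral angles follows from $\partial_\nu u = 0$ on each face of $F$, which forces $\Sigma_t$ to meet $F$ orthogonally and thus preserves angles along the vertical edges.
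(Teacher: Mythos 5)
Your proposal is correct and follows essentially the same route as the paper: rewrite the boundary terms of Lemma \ref{IFC} via Proposition \ref{BoundaryTerms} and $\pi=k-Kg$, so that they cancel against $H|\nabla u|-\pi(\nabla u,\nu)$ on $T\cup B$ and reduce to $|\nabla u|\,\Pi(\tau,\tau)=|\nabla u|\,\kappa$ on $F$ (using the orthogonal intersection forced by $\partial_\nu u=0$), then apply the coarea formula and Gauss--Bonnet with corner angles equal to the dihedral angles along the vertical edges. The regularity/Sard issues you flag are exactly the ones the paper addresses through the maximum principle (level sets reach the side faces) and the edge estimates from the proof of Lemma \ref{IFC}.
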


\begin{proof}
First, we know that each regular level set of $u$ must reach the side faces by maximum principle. Moreover, since $u\in W^{3,p}_{loc}(\mathring{M})$,  $C^{2,\alpha}_{loc}$ on each face and $C^{1,\alpha}_{loc}$ around the vertical edges, Sard's theorem is applicable (\cite{dP}). Together with the topology of a prism, each component of a level set of regular values is homeomorphic to $P_0$.  Furthermore, by homogeneous Neumann condition, the dihedral angle of the boundary of the level sets is the same as the dihedral angle between corresponding side faces.  

\

We are going to show that the boundary terms of \eqref{base} actually reveal the boundary energy condition. For $T$ and $B$, on which $|\Na u|$ is nowhere vanishing by maximum principle and $u$ is a constant,  by Proposition \ref{BoundaryTerms}, we have 
\be
\begin{split}
&\ \int_{T\cup B} \p_{\nu} |\Na u| +k(\Na u, \nu) \,d \sigma\\
= & \ \int_{T\cup B} -H |\Na u| -Kg(\Na u, \nu)+k(\Na u, \nu) \,d \sigma\\
= &  \ \int_{T\cup B} -H |\Na u| +\pi(\Na u, \nu) \,d \sigma.\\
\end{split} 
\ee
Then,  on $F$, $\p_{\nu}u=0$. From Proposition \ref{BoundaryTerms} and coarea formula, we have
\be
\begin{split}
&\ \int_{F} \p_{\nu} |\Na u| +k(\Na u, \nu) \,d \sigma\\
=&\ \int_{0}^1 \int_{\p{\S_t}} -\Pi\left( \frac{\Na u}{|\Na u|},\frac{\Na u}{|\Na u|} \right) +k\left(\frac{\Na u}{|\Na u|} ,\nu\right) \,d \tau \,dt\\
= & \ \int_{F} -H |\Na u|+k(\Na^F u, \nu) \,d \sigma+ \ \int_{0}^1 \int_{\p\S_t}\kappa \, d\tau \,dt \\
= &  \ \int_{F} -H |\Na u|+\pi(\Na u, \nu) \,d \sigma+ \ \int_{0}^1 \int_{\p\S_t}\kappa \, d\tau \,dt .\\
\end{split} 
\ee
The proof is then concluded by Lemma \ref{IFC} and Gauss-Bonnet theorem.  
\end{proof}

\section{Dihedral Rigidity}\label{Dihedral Rigidity}
In this section, we are going to use Lemma \ref{cubic integration formula} to show the relation between energy conditions and the geometry and dihedral rigidity of a polyhedral initial data set. 

\subsection{General spacetime case}
First, we consider the case of a type $P$ initial data set in general. 
\begin{lma}\label{PDEimplication}
Let $(M^3,g,k)$ be an initial data set of type $P$, where $P_0$ is a convex $q$-gon, which simultaneously satisfies: 
\begin{enumerate}
\item the dominant energy condition, 
\item $H\geq -tr_{T}k$ on $T$, $H\geq tr_{B}k$ on $B$, 
\item $H\geq |\pi^T(\cdot,\nu)|$ on $F$, where the superscript $^{T}$ means the projection onto the tangent bundle of the corresponding domain and $H$ denotes the mean curvature computed with respect to $\nu$, the outward unit normal of $M$,
\end{enumerate}
where $T, B,$ and $F$ denote the top, the bottom and the side faces of $M$ respectively.  

\

Assume that the dihedral angles are everywhere less than $\pi$; moreover,  the dihedral angles between $T$ and $F$ and those between $B$ and $F$ are everywhere less than or equal to $\pi/2$.  
Let $u$ be a spacetime harmonic function in Lemma \ref{PDEcube2} and $\S_t=\{u^{-1}(t)\}$, where $t\in[0,1]$. Then,  
\begin{enumerate}
\item Let $\{E_j\}_{j=1}^q$ denote the vertical edges of $F$ and $\theta_j:=\sup_{E_j} \alpha_j$, where $\alpha_j$ denotes the dihedral angle between $F$ on $E_j$. Then, $$\sum_{j=1}^q \theta_j \geq (q-2)\pi.$$ 
In particular, the dihedral angles of $M$ cannot be everywhere less than those of $P$.  

\item If the dihedral angles of $M$ are further assumed to be everywhere less than or equal to those of $P$, then
\begin{enumerate}
\item $\mu-|J|=0$ on $M$.  
\item The dihedral angles between $T$ and $F$ and those between $B$ and $F$ are everywhere $\pi/2$.  
\item $M$ is smoothly foliated by $\S_t, t\in[0,1]$. On each $\S_t$, the following properties are satisfied.  
\begin{enumerate}
\item $h_{\Sigma_t}+k_{\Sigma_t}=0$, where $h_{\S_t}$ denotes the second fundamental from of $\S_t$ with respect to $\frac{\Na u}{|\Na u|}$. In particular, $\S_t$ is a free boundary (stable) totally spacetime geodesic MOTS.  
\item $\mu+\la J, \frac{\Na u}{|\Na u|} \ra=0$ and hence $\frac{\Na u}{|\Na u|}=-\frac{J}{|J|}$.  
\item  The dihedral angles of $\S_t$ are all equal to those of $P_0$, $R_{\S_t}=0$ and $\kappa_{\p \S_t}=0$. Therefore, each level set is isometric to $P_0$ up to scaling.  
\end{enumerate}
\item On $\p M$, 
\begin{enumerate}
\item $H= -tr_{T}k$ on $T$, $H= tr_{B}k$ on $B$, 
\item $H= |\pi^T(\cdot,\nu)|$ on $F$. 
\end{enumerate}
\end{enumerate} 
\end{enumerate}
\end{lma}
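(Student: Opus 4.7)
The plan is to feed the integrated inequality of Lemma \ref{cubic integration formula} with the convex-polygon identity $\sum_{j=1}^{q}(\pi-\bar\alpha_j)=2\pi$, where $\bar\alpha_j$ are the interior angles of $P_0$. First I would verify that every contribution to the left-hand side of \eqref{CUBE} is non-negative. On $T$, the maximum principle gives $u<1$ in the interior, so $\nabla u=|\nabla u|\nu$ on $T$; using $\pi=k-(\operatorname{tr}_g k)g$ and $\operatorname{tr}_g k=\operatorname{tr}_T k+k(\nu,\nu)$ one computes $H|\nabla u|-\pi(\nabla u,\nu)=(H+\operatorname{tr}_T k)|\nabla u|\ge 0$. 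Analogously on $B$, using $\nabla u=-|\nabla u|\nu$, one obtains $(H-\operatorname{tr}_B k)|\nabla u|\ge 0$. On $F$, the Neumann condition $\partial_\nu u=0$ makes $\nabla u$ tangent, so $\pi(\nabla u,\nu)=k(\nabla u,\nu)$ and Cauchy-Schwarz bounds the integrand below by $(H-|\pi^T(\cdot,\nu)|)|\nabla u|\ge 0$. The interior integrand dominates $(\mu-|J|)|\nabla u|\ge 0$ by the dominant energy condition.

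For conclusion (1), the prism topology makes each regular level set $\Sigma_t$ a topological disk, so $\chi(\Sigma_t)=1$, and the right side of \eqref{CUBE} reduces to $\int_0^1\!\bigl(2\pi-\sum_{j}(\pi-\alpha_j(t))\bigr)\,dt$. Combined with non-negativity of the left side this rearranges to $\sum_j\int_0^1\alpha_j(t)\,dt\ge(q-2)\pi$; bounding each integral by $\theta_j=\sup_{E_j}\alpha_j$ finishes (1). Since $\sum_j\bar\alpha_j=(q-2)\pi$, no everywhere-strict improvement is possible.

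For conclusion (2), the assumption $\alpha_j\le\bar\alpha_j$ pointwise forces $\sum_j(\pi-\alpha_j(t))\ge 2\pi$ on each $\Sigma_t$, so the right side of \eqref{CUBE} is non-positive; squeezing against the non-negative left side turns every inequality into an equality. From the vanishing of $|\bar\nabla\bar\nabla u|^2/|\nabla u|$ I obtain $\bar\nabla\bar\nabla u\equiv 0$. Restricting this to $T\Sigma_t$ yields $h_{\Sigma_t}+k_{\Sigma_t}\equiv 0$, hence each $\Sigma_t$ is a free-boundary totally spacetime geodesic MOTS, the orthogonality to $F$ being exactly $\partial_\nu u=0$. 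The remaining interior equality $(\mu-|J|)|\nabla u|+\langle J,\nabla u\rangle+|J||\nabla u|\equiv 0$ gives $\mu=|J|$ together with $J=-|J|\,\nabla u/|\nabla u|$, and the boundary-integrand equalities promote to the pointwise identities $H=-\operatorname{tr}_T k$ on $T$, $H=\operatorname{tr}_B k$ on $B$, $H=|\pi^T(\cdot,\nu)|$ on $F$, and $\alpha_j\equiv\bar\alpha_j$.

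To upgrade to the pointwise conditions $R_{\Sigma_t}\equiv 0$ and $\kappa_{\partial\Sigma_t}\equiv 0$, I would revisit the Bochner-Gauss derivation behind Lemma \ref{IFC}: the scalar curvature $R_{\Sigma_t}$ enters via the Gauss equation as a non-negative contribution to the integrand, so the integral equality promotes to pointwise vanishing on each regular level set, and a parallel analysis at the $F$-boundary yields $\kappa_{\partial\Sigma_t}\equiv 0$. Combined with $\chi=1$ and $\alpha_j=\bar\alpha_j$, each $\Sigma_t$ is isometric to a scaled copy of $P_0$. The smoothness of the foliation and the interior positivity $|\nabla u|>0$ follow from $\bar\nabla\bar\nabla u\equiv 0$ via the ODE for $|\nabla u|$ along integral curves of $\nabla u/|\nabla u|^2$, in the spirit of \cite{ALY2} Section 5. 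Finally, since $T$ and $B$ are the $t\to 1,0$ limiting leaves of this foliation and every interior leaf meets $F$ orthogonally by the Neumann condition, the limiting leaves inherit orthogonality to $F$, pinning the $T$-$F$ and $B$-$F$ dihedral angles at $\pi/2$. The principal difficulty I anticipate is precisely this upgrade from integrated to pointwise rigidity of the level sets, together with ensuring the foliation extends regularly up to $T$ and $B$ so the corner rigidity can be read off; the boundary sign computations are routine but must be handled carefully near the edges where the regularity of $u$ drops.
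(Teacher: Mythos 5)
Your setup matches the paper's: the sign checks on $T$, $B$, $F$, the disk topology and Gauss--Bonnet bookkeeping for part (1), and the equality case yielding $\overline\nabla\,\overline\nabla u\equiv 0$, $\mu=|J|$, $J=-|J|\frac{\nabla u}{|\nabla u|}$, the pointwise boundary equalities and $\alpha_j\equiv\bar\alpha_j$ are all as in the paper, as is the Kato-plus-ODE argument for $|\nabla u|>0$. But there is a genuine gap at exactly the step you flag as the principal difficulty: the pointwise statements $R_{\Sigma_t}\equiv 0$ and $\kappa_{\partial\Sigma_t}\equiv 0$ do \emph{not} follow by ``revisiting the Bochner--Gauss derivation'' on the grounds that $R_{\Sigma_t}$ ``enters as a non-negative contribution.'' In \eqref{base} and \eqref{CUBE} the level-set curvature is a retained term on the majorizing side with no sign hypothesis, and the passage from $\int\frac12 R_{\Sigma_t}+\int\kappa$ to $2\pi\chi(\Sigma_t)-\sum_j(\pi-\alpha_j)$ is the Gauss--Bonnet \emph{identity}; hence the equality case pins only the quantities that were discarded with a sign (the Hessian term, $\mu-|J|$, the boundary defects, the angle defects) and gives no pointwise control on $R_{\Sigma_t}$ or $\kappa$, which could a priori change sign on a leaf. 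The paper closes this with additional machinery: it verifies that each $\Sigma_t$ is a \emph{stable} free boundary MOTS (the flow generated by $\nabla u/|\nabla u|^2$ supplies the variation, and the Robin condition $\partial_\nu\varphi=\Pi(N,N)\varphi$ is checked via $\partial_\nu(1/|\nabla u|)=\Pi(N,N)/|\nabla u|$), invokes the stability form $G(w,w)\ge 0$ of \cite{ALY2}, shows $G(1,1)=0$ from the slice-wise equality, and only then gets $Q\equiv 0$, hence $R_{\Sigma_t}\equiv 0$ pointwise; next $\kappa_{\partial\Sigma_t}=H-\Pi(N,N)\ge 0$ pointwise follows from $H=|\pi^T(\nu,\cdot)|$ together with $\Pi(N,N)=\pi(N,\nu)$ (a consequence of $\overline\nabla\,\overline\nabla u=0$), and Gauss--Bonnet with $R_{\Sigma_t}=0$ finally forces $\kappa\equiv 0$. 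Without an argument of this kind your claimed isometry of the leaves with $P_0$ is unsupported.

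A secondary, fixable point: your assertion that the $T$--$F$ and $B$--$F$ dihedral angles are pinned to $\pi/2$ because the ``limiting leaves inherit orthogonality'' is heuristic exactly where the regularity of $u$ degenerates. The paper instead argues by contradiction: if the angle were $<\pi/2$ at a point $p$ of a horizontal edge, then by \cite{AK80} $u$ is $C^{1,\alpha}$ near $p$ and the mixed Dirichlet--Neumann conditions at an acute corner force $\nabla u(p)=0$, contradicting the established lower bound $|\nabla u|\ge c>0$. You should replace the limiting-leaf heuristic by this (or an equivalent) regularity argument.
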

\begin{proof} 
First, by maximum principle,  $\p_{\nu}u< 0$ on $B$ while $\p_{\nu}u> 0$ on $T$. By Lemma \ref{cubic integration formula}, we have
\be\label{comparison}
\begin{split}
&\ \int_{M} \frac12  \frac{|\o \Na \o \Na u|^2}{| \nabla u |} + (\mu-|J|)|\Na u|  \,dV \\
&+\ \int_{T} (H+tr_Tk) |\Na u| \,d \sigma+\ \int_{B} (H-tr_Bk) |\Na u| \,d \sigma\\
&+\ \int_{F} (H -|\pi^{T}(\cdot,\nu)|) |\Na u|\ \,d \sigma\\
\leq &\ \int_{0}^1 \int_{\S_t} \frac12  \frac{|\o \Na \o \Na u|^2}{| \nabla u |^2} + \mu + \la J, \frac{\Na u}{|\Na u|}\ra  \, d A \, dt\\
&+\ \int_{T} (H+tr_Tk) |\Na u| \,d \sigma+\ \int_{B} (H-tr_Bk) |\Na u| \,d \sigma\\
&+\ \int_{F} (H |\Na u|-\pi(\Na u,\nu)  \,d \sigma\\
\leq& \int_{0}^{1} \int_{\S_t} \frac{1}{2}R_{\S_t} dA\,  d t+ \ \int_{0}^1 \int_{\p\S_t}\kappa \, d\tau \,dt \\
= &\int_{0}^{1} \left(  2\pi\chi(\S_t) - \sum_{j=1}^q (\pi-\alpha_j) \right)\,d t.\\
\end{split} 
\ee 
By the dominant energy condition and the assumptions on $\p M$, we know that 
\be \label{angle}
\begin{split}\sum_{j=1}^q \theta_j \geq (q-2)\pi.
\end{split} 
\ee
Hence, the dihedral angles of $M$ cannot be everywhere less than those of $P$.  

\

If it is further assumed that the dihedral angles of $M$ are everywhere less than or equal to those of $P$, then the dihedral angles of $\S_t$ are all equal to those of $P_0$ by \eqref{angle}.
Then on $\p M$, we have 
\be \label{BDEC1}
\begin{split}
H= - tr_{T}k \text{ on } T, H= tr_{B}k \text{ on } B \text{ and } H= |\pi^T(\cdot,\nu)| \text{ on } F. 
\end{split} 
\ee

Moreover, on $M$, 
\be \label{DEC0}
\mu-|J|=0, 
\ee 
and
\be \label{spacetimeHessian0}
\bar \Na \bar \Na u = \Na \Na u + |\Na u|k =0. 
\ee

Note that $u$ is not a constant function, hence  $\Na u$ is non-vanishing somewhere.  Furthermore, by Kato's inequailty, on $M$,
\be
\begin{split}
|\Na |\Na u||\leq|\Na \Na u|\leq|k||\Na u|.
\end{split} 
\ee
Then by ODE technique, we know that $\Na u$ is nowhere vanishing and hence each $t\in[0,1]$ is a regular value.  By compactness of $M$, there exists $c\in \R$ such that
\be \label{nonvanishing gradeint}
|\Na u|\geq c >0.
\ee 
Hence, $M$ is a smooth foliation of regular level sets.  Moreover, assume on the contrary that there exists $p$ on the horizontal edges where $T$ or $B$ meet $F$ such that the dihedral angle is less than $\pi/2$, then by \cite{AK80} Proposition (Satz) 3.1, $u$ is $C^{1,\alpha}$ around $p$. Since the dihedral angle is less than $\pi/2$, $\Na u(p)=0$. A contradiction arises.  

\

Then we recall that on each level set $\S_t$,  for $X, Y\in T\S_t$, by \eqref{spacetimeHessian0} we have
\be \label{spacetimegeodesic}
h(X,Y)=\frac{\Na \Na u}{|\Na u|}(X,Y)=-k(X,Y), 
\ee 
where $h$ denotes the second fundamental form for $\S_t$ with respect to $\frac{\Na u}{|\Na u|}$.  
From \eqref{spacetimegeodesic}, $M$ is a smooth foliation of totally spacetime geodesic MOTS. 

\subsection{Stability of free boundary MOTS} \label{StableFBMOTS}
To further study the geometry of each level set and $M$, we have to verify and use the fact that $\S_t$ is a stable free boundary MOTS. 
\begin{prop} (\cite{EHLS} Proposition 2)
For a 2-sided MOTS $\Sigma$, let $\phi\in C^{\infty}(\Sigma)$ and $N$ be a continuous unit normal vector field on $\Sigma$, we have
\be
\begin{split}
&\d_{\phi N}(H+tr_{\S}k)\\
=&-\Lp_{\S} \phi + 2\la W_{\Sigma},\Na^{\S} \phi \ra\\
&+(div_{\S} W_{\Sigma}-|W_{\Sigma}|^2+\frac{1}{2}R_{\Sigma}-\mu-J(N)-\frac{1}{2}|k_{\S}+h_{\Sigma}|^2 )\phi, 
\end{split}
\ee
where $h_{\S}$ and $H$ respectively denote the second fundamental form and mean curvature of $\Sigma$ with respect to $N$ and $W_{\S}\in T\S$ is dual to $k(N,\cdot)|_{T\Sigma}$.  
\end{prop}

A definition of stable capillary MOTS is proposed in \cite{ALY2}, and here we state the free boundary case only. 
\begin{defn}(\cite{ALY2} Definition 5.1)\label{Defcap}
A free boundary MOTS $\Sigma\subset M$ is stable with respect to the variation vector field $X=\varphi N,$ where $N$ is a continuous unit normal vector field on $\Sigma$, if and only if there exists a non-negative function $\varphi\in C^{2}(\Sigma)$, $\varphi\not\equiv 0$ satisfying Robin boundary condition $\frac{\partial \varphi}{\partial \nu}=\Pi(N,N)\varphi$ such that $\delta_X (H+tr_{\S}k)\geq 0$. Moreover, it is called \emph{strictly stably outermost} with respect to the direction $X$ if, moreover, $\delta_X(H+tr_{\S}k)\neq 0$ somewhere on $\Sigma$.
\end{defn}

On $\S_t$, let $N:=\frac{\Na u}{|\Na u|}$. By \eqref{nonvanishing gradeint}, $\frac{1}{|\Na u|}$ is well defined on $M$.  Then, we can consider the flow generated by $\frac{\Na u}{|\Na u|^2}$. For $M$ being foliated by level sets of $u$ and each $\S_t$ is a free boundary MOTS, we have 
\be
\begin{split}
&\d_{\frac{1}{|\Na u|} N}(H_{\S_t}+tr_{\S_t}k)=0.
\end{split}
\ee
While on $\p{\S_t}$, 
\be 
\begin{split}
\p_{\nu}( \frac{1}{|\Na u|})
=&-\frac{\Na_{\nu} \la \Na u,  \Na u \ra}{2|\Na u|^3}\\
=&\frac{-\la \Na_{\Na u} \Na u,  \nu \ra}{|\Na u|^3}\\
=&\frac{\Pi(N,N)}{|\Na u|}.
\end{split} 
\ee 
Therefore, we can conclude that $\S_t$ is a stable free boundary MOTS. 

\

Following \cite{ALY2} Lemma 5.4 (equation (5.16) in \cite{ALY2}, equation (2.9) in \cite{GS}), the stability of $\S_t$ yields to a positive-semidefinite bilinear form $G$ by integrating $\frac{w^2}{|\Na u|}\d_{|\Na u|N}(H+tr_{\S_t}k)$ over $\S_t$, 
\begin{equation}
G(w,w):=\int_{\Sigma_t} \left(|\nabla^{\Sigma_t} w|^2 +Qw^2\right)dA -\int_{\partial \Sigma_t} \left(\Pi(N,N)-\langle W_{\S_t} ,\nu\rangle\right) w^2 d\tau \geq 0
\end{equation} for all $w\in C^{\infty}(\Sigma_t)$, where $Q:=\frac{1}{2}R_{\Sigma}-\mu-J(N)-\frac{1}{2}|k_{\S}+h_{\Sigma}|^2$. Moreover, if $G(1,1)=0$, then 
\be \label{QW0}
Q=0\,\,\,\text{,}\,\,\,W_{\S_t}=\nabla^{\S_t}\log\frac{1}{|\Na u|} \,\,\,\text{and}\,\,\,\Pi(N,N)=\la W_{\S_t},\nu \ra.  
\ee 
 
Now,  by \eqref{comparison}, \eqref{spacetimeHessian0} and \eqref{spacetimegeodesic}, we get that
\be
\begin{split}
0=&\ \int_{\S_t} \frac{|\o \Na \o \Na u|^2}{|\Na u|^2}-\ \frac{1}{2}  R_{\S_t}+\mu + \la J, N \ra  \, d A+ \ \int_{\p \S_t} \Pi(N,N) -\pi(\nu, N) \,d \tau\\
=&\ \int_{\S_t}-\ \frac{1}{2}  R_{\S_t}+\mu + \la J, N \ra +\frac{1}{2}|k_{\S_t}+h_{\S_t}|^2 \, d A+ \ \int_{\p \S_t} \Pi(N,N)-\la W_{\S_t},\nu \ra \,d \tau\\
=&-G(1,1).
\end{split} 
\ee

Then, by \eqref{QW0}, we have 
\be\label{scalarcurvature0}
R_{\S_t}=0.  
\ee 

Let $\tau_t'$ denote the unit tangent vector of $\p\S_t$. From \eqref{BDEC1}, we can see that on $F$

\be \label{Fmean}
H=|\pi^T(\nu,\cdot)|=\sqrt{|\pi(\nu,N)|^2+|\pi(\nu,\tau_t')|^2}, 
\ee
hence, 
\be \label{Fmeang0}
H\geq |\pi(\nu,N)|\geq 0. 
\ee Furthermore, by \eqref{spacetimeHessian0}, on $F$, 
\be \label{F2ndff}
\begin{split}
\Pi(N,N)
=&\la \Na_{\frac{\Na u}{|\Na u|}} \nu, \frac{\Na u}{|\Na u|} \ra\\
=&\frac{-\Na \Na u (\Na u,  \nu)}{|\Na u|^2}\\
=&k(N, \nu)\\
=&\pi(N,\nu).
\end{split}
\ee 
As a result, 
\be \label{geodesiccurvaturenn}
\begin{split}
\kappa_{\p\S_t}=\Pi(\tau_t',\tau_t')=H-\Pi(N,N)\geq 0. 
\end{split}
\ee  

By \eqref{scalarcurvature0} and Gauss-Bonnet Theorem, we can conclude that
\be
\begin{split}
\kappa_{\p\S_t}=0.
\end{split}
\ee 
Therefore,  $\S_t$ is isometric to $P_0$ up to scaling. Then $g$ can be expressed as $\frac{1}{|\Na u|^2}dt^2+f(t)g_{Euc}$ for some function $f$ which depends on $t$ only. This split form will be useful in Corollary \ref{parabolichyperbolic} for showing dihedral rigidity of parabolic prisms. 
\end{proof}

For $P$ being a rectangular prism (cube), we can further deduce its properties from its symmetry. 
\begin{thm}\label{cuberigidity}
Let $(M^3,g,k)$ be a type $P$ initial data set, where $P_0$ is a rectangle, which simultaneously satisfies: 
\begin{enumerate}
\item the dominant energy condition, 
\item the boundary dominant energy condition, 
\item everywhere the dihedral angle between two faces of $M$ is less than or 
equal to $\pi/2$. 
\end{enumerate}
Let  $\S_t=\{u^{-1}(t)\}$, where $t\in[0,1]$ and $u$ is a spacetime harmonic function solving the mixed boundary problem in Lemma \ref{PDEcube2}.  Then, 
\begin{enumerate}
\item
$M$ is smoothly foliated by $\S_t, t\in[0,1]$. On each $\S_t$, the following properties are satisfied.  
\begin{enumerate}
\item $h_{\Sigma_t}+k_{\Sigma_t}=0$, where $h_{\S_t}$ denotes the second fundamental from of $\S_t$ with respect to $\frac{\Na u}{|\Na u|}$. In particular, 
$\S_t$ is a free boundary (stable) totally spacetime geodesic MOTS.  
\item $\mu+\la J, \frac{\Na u}{|\Na u|} \ra=0$ and hence $\frac{\Na u}{|\Na u|}=-\frac{J}{|J|}$.  
\item The 4 dihedral angles of the edges are all equal to $\pi/2$, $\kappa_{\p \S_t}=0$ and $R_{\S_t}=0$. Hence, each level set is isometric to a Euclidean rectangle.  
\end{enumerate}
\item $\mu=|J|=0$ on $M$.  
\item On $\p M$, 
\begin{enumerate}
\item $R_{\p M}=0$, $\Pi=k|_{T(\p M)}=0$. where $\Pi$ is the second fundamental form of $\p M$ with respect to the outward normal $\nu$.  Consequently, $H_{\p M}=tr_{\p M} k = |\pi^T(\nu,\cdot)|=0,$ where the superscript $^{T}$ means the projection onto the tangent bundle of the corresponding domain. In particular, $\p M$ is isometric to the boundary of a Euclidean rectangular prism.  
\item $(\Na u) |_{\p M}$ is a parallel vector field,  i.e.  $\Na_X \Na u\equiv 0$ for $X\in T(\p M)$.
\end{enumerate}
\item $(M,g,k)$ can be isometrically embedded into Minkowski space.  
\end{enumerate} 
\end{thm}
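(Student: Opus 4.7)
The plan is to combine Lemma \ref{PDEimplication} in the rectangular-prism case with a three-fold reuse of Lemma \ref{PDEcube2}, constructing one spacetime harmonic function for each pair of opposite faces of $P$. Applied to the given $u$, Lemma \ref{PDEimplication} has $q=4$; the bound $\sum_{j=1}^{4}\theta_j \geq (q-2)\pi = 2\pi$ together with the assumption that every dihedral angle of $M$ is at most $\pi/2$ forces each dihedral angle to equal $\pi/2$, so that in particular every dihedral angle of $M$ is at most the corresponding one of $P$. Hence the full conclusion (2) of Lemma \ref{PDEimplication} applies, delivering statement (1) of the theorem at once, along with the interior identities $\o\Na\o\Na u = 0$, $\mu - |J| = 0$, and $J = -|J|\,\Na u/|\Na u|$ wherever $|J| > 0$.

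To upgrade $\mu - |J| = 0$ to $\mu = |J| = 0$ and to obtain (3) and (4), I would exploit the symmetry of $P$. Label its three pairs of opposite faces $(T^{(a)}, B^{(a)})$, $a = 1, 2, 3$, and for each $a$ apply Lemma \ref{PDEcube2} with that pair playing the role of Dirichlet faces. Each resulting $u^{(a)}$ satisfies the full conclusion of Lemma \ref{PDEimplication}: a smooth foliation by Euclidean-rectangle MOTS, the Hessian equation $\Na\Na u^{(a)} = -|\Na u^{(a)}|\,k$, and the directional equality $J = -|J|\,N^{(a)}$ with $N^{(a)} := \Na u^{(a)}/|\Na u^{(a)}|$. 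The three unit vector fields $N^{(a)}$ are orthogonal on $\p M$ by the incidence of Dirichlet and Neumann faces: on $T^{(b)}$, $N^{(b)}$ is its outward unit normal while $N^{(a)}$ ($a \neq b$) is tangent to it. This boundary orthogonality propagates to the interior via a computation using the three Hessian equations, which together act as parallel transport for the frame $\{N^{(a)}\}$ in the modified connection $\Na + k(\cdot,\cdot)\nu$. Since three pointwise orthogonal unit vectors cannot all coincide with $-J/|J|$, one concludes $|J| \equiv 0$, and then $\mu \equiv 0$.

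For (3), the orthonormal frame $\{N^{(a)}\}$ and the three Hessian equations determine $k$ by its values on this frame: for any $a$, on the face $T^{(a)}$ the vectors $N^{(b)}, N^{(c)}$ (with $\{a,b,c\} = \{1,2,3\}$) span the tangent space. The conditions $h_{\S_t^{(a)}} + k_{\S_t^{(a)}} = 0$, which give $k$ on directions tangent to $\S_t^{(a)}$, combined with the boundary equality $H = |\pi^{T}(\nu,\cdot)|$ from Lemma \ref{PDEimplication}(2)(d) and $J = 0$, then force $k|_{T(\p M)} \equiv 0$. This yields $H = tr_{\p M} k = |\pi^{T}(\nu,\cdot)| = 0$ on every face; the vanishing of $\Pi$ and $R_{\p M}$ follows by combining $h_{\S_t^{(a)}} = -k_{\S_t^{(a)}} = 0$ across the three foliations with $R_{\S_t^{(a)}} = 0$ and $\kappa_{\p\S_t^{(a)}} = 0$. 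Parallelism of $\Na u^{(a)}|_{\p M}$ is then read off directly from $\Na\Na u^{(a)} = -|\Na u^{(a)}|\,k$ restricted to $\p M$.

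Finally for (4), the three orthogonal flat foliations by congruent Euclidean rectangles display $(M, g)$ as isometric to a Euclidean rectangular prism. To realise $\Phi: M \to \R^{3,1}$, I would define three spatial coordinates $X^{(a)}$ proportional to $u^{(a)}$, rescaled to match the aspect ratio of $P$, and construct the time function $\tau$ as a potential for the 1-form $\omega := \sum_a k(N^{(a)},\cdot)\,du^{(a)}/|\Na u^{(a)}|$. Closedness of $\omega$, which is what allows a single-valued $\tau$, follows from the Codazzi identity $div_g\,\pi = J = 0$ together with $\o\Na\o\Na u^{(a)} = 0$. Then $\Phi = (X^{(1)}, X^{(2)}, X^{(3)}, \tau)$ realises $(M, g, k)$ as a spacelike hypersurface in Minkowski space with induced second fundamental form $k$. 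The hardest step is this last one: assembling the time function and verifying the Gauss-Codazzi matching demands that the three spacetime-harmonic inputs cohere into a single closed 1-form, and it is the rectangular symmetry of $P$, reflected in the existence of the three orthogonal parallel frames $\{N^{(a)}\}$, that makes this coherence possible.
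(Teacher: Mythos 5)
Your opening step (forcing all dihedral angles to be $\pi/2$ via Lemma \ref{PDEimplication} with $q=4$ and then invoking its conclusion (2)) matches the paper, but the later steps have genuine gaps. First, the mechanism you propose for $\mu=|J|=0$ does not work as stated. The identity $\Na N=-k+k(N,\cdot)\otimes N$ for $N=\Na u/|\Na u|$ is not parallel transport in any fixed connection and does not preserve inner products between distinct fields: for two such unit fields $X,Y$ one computes $\Na\la X,Y\ra=-(1-\la X,Y\ra)\,k(X+Y,\cdot)$, which vanishes when $X=Y$ but not when $X\perp Y$, so boundary orthogonality of your frame $\{N^{(a)}\}$ does \emph{not} propagate into the interior (and even on $\p M$ you only get $N^{(b)}\perp N^{(a)}$ on the faces where $N^{(b)}$ is the normal, not pairwise orthogonality of the triple everywhere). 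What the identity does give is exactly the paper's Lemma \ref{l-independent-1}: $|\Na(|X-Y|^2)|\le 2|k||X-Y|^2$, so two unit gradient fields coincide either everywhere or nowhere. The paper's argument is: wherever $|J|>0$, the equality $\mu+\la J,N\ra=0$ together with $\mu=|J|$ forces $N=-J/|J|$ for \emph{every} such harmonic function; two functions adapted to different face-pairs have unit gradients that differ on $\p M$ (one is normal, the other tangent to a given face), hence differ everywhere, a contradiction; thus $|J|\equiv 0$ and $\mu\equiv 0$. Your conclusion is correct but needs this coincidence/Gronwall argument, which you never invoke.

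Second, your derivation of $\Pi=k|_{T(\p M)}=0$ is not substantiated. With only one harmonic function per pair of opposite faces, \eqref{spacetimegeodesic} gives $\Pi=-k|$ on the face where $N$ is the outward normal and $\Pi=+k|$ on the opposite face; the relations $H=|\pi^T(\cdot,\nu)|$ on side faces and $J=0$ do not by themselves force $k|_{T(\p M)}=0$. The paper's key trick, which you are missing, is to solve the problem again with the roles of $T$ and $B$ \emph{reversed} on the same pair of faces, so that on each face one obtains both $\Pi=-k|$ and $\Pi=+k|$, hence $\Pi=k|_{T(\p M)}=0$; repeating over the three pairs covers all six faces, after which the boundary dominant energy condition gives $H=tr_{\p M}k=|\pi^T(\cdot,\nu)|=0$ and the stability identity gives $R_{\p M}=0$. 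Finally, your step (4) starts from the claim that the three foliations make $(M,g)$ isometric to a Euclidean rectangular prism; this is false in general (a spacelike graph over $\ti P$ in Minkowski space, flat near $\p\ti P$, satisfies all the hypotheses with non-flat $g$ — only $\p M$ and the level sets are rigidified), and your construction of the time function (closedness of $\omega$, the Gauss--Codazzi verification) is asserted rather than proved. The paper instead glues $(M,g,k)$ to $(\R^3\setminus\ti P,g_{Euc},0)$ along the isometric boundary, where the corner data match because $\Pi=k|_{T(\p M)}=0$ and all angles are $\pi/2$, notes that the resulting asymptotically flat data has $E=|P|=0$, and invokes the rigidity case of a positive mass theorem with corners (\cite{T}, \cite{Shibuya}) to produce the isometric embedding into Minkowski space; some such rigidity input (or a complete proof of your graphical construction) is needed to close step (4).
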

\begin{proof} 
Based on Lemma \ref{PDEimplication}, we are going to study the geometry of $M$ further by its symmetry.  We get that the dihedral angles are everywhere $\pi/2$ as in the proof of Lemma \ref{PDEimplication}. Then, by Lemma \ref{PDEcube}, $u\in C^{2,\alpha}(M)\cap W^{3,p}_{loc}(\mathring M)$.  Furthermore, $u\in C^{3,\alpha}_{loc}(\mathring M)$ by \eqref{nonvanishing gradeint}. 

From \eqref{spacetimegeodesic}, on $T$ and $B$, we respectively have $\Pi=h=-k|_{T(\p M)}$ and $\Pi=-h=k|_{T(\p M)}$.  Therefore, when we reverse the identification of $T$ and $B$ and solve for another spacetime harmonic function,  we get that on both $T$ and $B$, 
\be \label{Flat faces}
\Pi=k|_{T(\p M)}=0. 
\ee

And since we can choose $T$, $B$ and $F$ freely,  we actually get that \eqref{Flat faces} holds on all 6 faces.  In particular, the geodesic curvature of all the edges of $M$ vanishes. Moreover, $R_{\p M}=0$ since each face of $\p M$ is a stable free boundary MOTS. Hence, we can further conclude that $\p M$ is isometric to the boundary of a Euclidean rectangular prism $\ti P=[0,a_1]\times[0,a_2]\times[0,a_3]$ for some $a_1, a_2, a_3>0$.  

\

Moreover, by the boundary dominant energy condition, we can conclude that on $\p M$, 
\be \label{BDEC0}
H_{\p M}=tr_{\p M}k=|\pi^T(\cdot,\nu)|=|k^T(\cdot,\nu)|=0. 
\ee 

Let $X, Y\in T(\p M)$, by \eqref{spacetimeHessian0}, \eqref{Flat faces} and \eqref{BDEC0}, we get that on $\p M$, 
\be
\begin{split}
\Na_X \Na_Y u
=&-|\Na u|k(X,Y)\\
=&0,
\end{split} 
\ee

\be
\begin{split}
\Na_X \Na_\nu u
=&-|\Na u|k(X,\nu)\\
=&0. 
\end{split} 
\ee

Hence, we can conclude that
\be\label{constant gradient}
\Na u \text{ is a parallel vector field on $\p M$.} 
\ee 

Moreover, if we solve for $u^i$, $1 \leq i\leq 3$,  with the corresponding choice of $B_i\subset\{x^i=0\}$ and $T_i\subset\{x^i=a_i\}$ with $u^i=a_i$ on $T_i$.  It is straight forward to check that $(u^1, u^2, u^3)$ is a coordinate system on a neighbourhood of $\p M$ with corresponding vector fields $\tilde \p_i:=\frac{\Na u^i}{|\Na u^i|^2}$.  

\

While on each $\S_t$, 
\be\label{DEC Level set 0}
\mu+\la J, \frac{\Na u}{|\Na u|} \ra=0. 
\ee
Since we can choose another orientation of cubes, then we have another spacetime harmonic function $w$ such that on $M$,
\be
\la J, \frac{\Na u}{|\Na u|} \ra=\la J, \frac{\Na w}{|\Na w|} \ra=-|J|,  
\ee
Note that $\frac{\Na u}{|\Na u|}$ and $\frac{\Na w}{|\Na w|}$ must be different somewhere and hence nowhere equal on $M$ by the following lemma and ODE technique.  
\begin{lma}\label{l-independent-1}(\cite{T} Lemma 8.1)
Let $X=\nabla u/|\nabla u|$  and let   $Y=\nabla \wt u/|\nabla \wt u|$ where $u$ and $\wt{u}$ are  spacetime harmonic functions, then
$$
|\nabla (|X-Y|^2)|\le 2|k||X-Y|^2.
$$
\end{lma}
\begin{proof}
\be 
\begin{split}
\Na X=&\Na (\frac{\Na u}{|\Na u|})\\
=&\frac{\Na \Na u}{|\Na u|}-\frac{1}{|\Na u|^2}\frac{\Na \Na u( \Na u, \cdot)}{|\Na u|}\Na u\\
=&-k+k(X,\cdot)X,
\end{split}
\ee
i.e. in local coordinates, $\Na_i X^j=-k_i^j+k_{mi}X^mX^j$.
Similarly, 
\be 
\begin{split}
\Na Y=-k+k(Y,\cdot)Y.
\end{split}
\ee
Hence, 
\be 
\begin{split}
\Na (|X-Y|^2)
=&-2\la \Na X, Y \ra - 2\la X, \Na Y \ra  \\
=&2(k(Y,\cdot)-k(X,\cdot)\la X, Y\ra+k(X,\cdot)-k(Y,\cdot)\la X, Y\ra)\\
=&2(1-\la X, Y \ra)k(X+Y,\cdot)\\
=&|X-Y|^2k(X+Y,\cdot).
\end{split}
\ee
And
\be 
\begin{split}
|\Na (|X-Y|^2)|
\leq &|X-Y|^2 |k|(|X|+|Y|)\\
\leq& 2|X-Y|^2||k|.
\end{split}
\ee
\end{proof}
As a result,  on $M$,
\be
\mu=|J|=0.
\ee

Consider the initial data set $(\R^3\setminus \ti P, g_{Euc}, 0)$, by \eqref{Flat faces}, we can identify $\p M$ and $\p \ti P$ to form an initial data set with corners $\p M$,   
\be
(M_1,g_1,k_1)=(M \cup (\R^3 \setminus \ti P), g \cup g_{Euc}, k\cup 0).  
\ee

Note that $\p M$ is isometric to $\p \ti P$ and the dihedral angle is everywhere $\pi/2$. Then, one can take Fermi coordinates or $\{\ti \p_i\}_{i=1}^3$ as aforementioned on $\p M$ so that under this chart, $g_1$ is Lipschitz and $k_1$ is $L^{\infty}$ on $M_1$ while smooth up to $\p M$ and $\p(\R^3\setminus \ti P)=\p \ti P$ respectively.  We see that $M_1$ is $\R^3$ topologically and satisfies $E=|P|=0$.  By \eqref{DEC0} and \eqref{BDEC0},  we can apply Corollary 1.1 in \cite{T} or Section VI of \cite{Shibuya}. Therefore, $(M_1,g_1,k_1)$, in particular $(M,g,k)$, can be isometrically embedded into Minkowski space (as a graph of a linear combination of spacetime harmonic functions).  
\end{proof}

\subsection{$k=g$ hyperbolic space}
For the special case $k=g$, we can conclude the dihedral rigidity for general prisms.  
\begin{defn}\label{parabolic rectangle}
Let $(\HH^3,g_{\HH})$ be the hyperbolic space with sectional curvature $-1$.  Fix the coordinate system $(x^1, x^2 ,x^3)$ such that $g_{\HH}$ takes the form
\be 
g_{\HH}=(dx^1)^2+e^{2x^1}\big( (dx^2)^2+(dx^3)^2\big).
\ee
\end{defn}
\begin{cor}(cf. \cite{Li2} Theorem 2.4)\label{parabolichyperbolic}
Let $(M^3,g,g)$ be an initial data set of type $P$ which simultaneously satisfies: 
\begin{enumerate}
\item the dominant energy condition, 
\item $H\geq \pi^{\perp}(\nu,\cdot)$ on $T$, 
\item $H\geq -\pi^{\perp}(\nu,\cdot)$ on $B$, 
\item $H\geq|\pi^{T}(\nu,\cdot)|$ on $F$,  
\item everywhere the dihedral angles between two faces of $M$ is less than or equal to those of $P$, 
\end{enumerate}
where $T$ and $B$ are identified with the face lying on $\{x^1=0\}$ and $\{x^1=1\}$ respectively\footnote{Note that our identification of ``top" and ``bottom" faces is the reverse of \cite{Li2}.}, $H$ is computed with respect to $\nu$, the unit outward normal of $M$. Then $(M,g,g)$ is isometric to a parabolic prism in $(\mathbb{H}^3,g_{\mathbb{H}})$.  
\end{cor}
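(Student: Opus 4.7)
The plan is to reduce the corollary to Lemma~\ref{PDEimplication} (with $k = g$) and then extract the hyperbolic warping from the umbilical structure of the level sets.

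First I would verify that for $k = g$ the hypotheses of Lemma~\ref{PDEimplication} are met. In dimension $3$ with $k = g$, $K = \tr_g g = 3$, $\pi = k - K g = -2g$ and $J = \operatorname{div}_g \pi = 0$, so the DEC reduces to $R_g \geq -6$. On $\p M$ the one-form $\pi(\nu, \cdot) = -2 \nu^{\flat}$ is purely normal, so $\pi^T(\cdot, \nu) = 0$ and $\pi^{\perp}(\nu, \cdot) = -2$. Conditions (2)--(4) of the corollary then read $H \geq -2 = -\tr_T k$ on $T$, $H \geq 2 = \tr_B k$ on $B$ and $H \geq 0 = |\pi^T(\cdot, \nu)|$ on $F$, which is precisely what the lemma requires. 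Moreover, the dihedral angle between a horospherical face $\{x^1 = \mathrm{const}\}$ and a side face $\{x^j = \mathrm{const}\}$ of a parabolic prism in $(\HH^3, g_{\HH})$ is $\pi/2$, so hypothesis (5) forces the $T$-$F$ and $B$-$F$ angles in $M$ to be $\leq \pi/2$.

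Applying Lemma~\ref{PDEimplication} yields a smooth foliation $\{\S_t\}_{t \in [0, 1]}$ of $M$ by free boundary stable MOTS with $h_{\S_t} = -k_{\S_t} = -g_{\S_t}$ (each $\S_t$ umbilical with mean curvature $-2$); each $\S_t$ is isometric to $P_0$ up to a $t$-dependent Euclidean dilation; and the metric admits the split form
\[
g = \frac{1}{|\Na u|^2}\, dt^2 + f(t)\, g_{Euc}.
\]
I would then read off the warping from the umbilical condition. A direct Christoffel computation in the $(t, x^2, x^3)$ coordinates gives
\[
h_{\S_t}(X, Y) = \frac{|\Na u|\, f'(t)}{2 f(t)}\, g_{\S_t}(X, Y), \qquad X, Y \in T\S_t,
\]
so $h_{\S_t} = -g_{\S_t}$ forces $|\Na u|$ to depend only on $t$ and yields $|\Na u|\, f'(t) = -2 f(t)$. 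Reparametrising by the arc-length $s$ along $\Na u/|\Na u|^2$ (so $ds = dt/|\Na u|$) converts this to $\tilde f'(s) = -2 \tilde f(s)$ with $\tilde f(s) := f(t(s))$, whence $\tilde f(s) = C\, e^{-2s}$ and $g = ds^2 + C\, e^{-2s}\, g_{Euc}$. Setting $x^1 := -s + c_0$ and absorbing the constant $C e^{2c_0}$ by a Euclidean rescaling of $P_0$, the metric takes the standard form $(dx^1)^2 + e^{2 x^1}\!\left((dx^2)^2 + (dx^3)^2\right)$ on $[0, L] \times \widetilde P_0$, exhibiting $(M, g, g)$ as a parabolic prism in $\HH^3$.

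The main technical point will be the orientation bookkeeping: under the footnoted convention, $T$ sits at $\{x^1 = 0\}$ and $B$ at $\{x^1 = 1\}$, so the flow of $\Na u / |\Na u|$ from $B$ to $T$ moves in the direction of \emph{decreasing} $x^1$; this needs to produce the minus sign in the ODE for $\tilde f$, so that after the reflection $s \mapsto -s + c_0$ one recovers the standard increasing hyperbolic warping $e^{+2 x^1}$ rather than $e^{-2 x^1}$.
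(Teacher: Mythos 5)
Your proposal is correct and follows essentially the same route as the paper: specialize to $k=g$, invoke Lemma \ref{PDEimplication} to get the foliation by umbilic free boundary horospheres ($h_{\S_t}=-g_{\S_t}$) with the split metric, derive the ODE $f'/f=-2$ in arc-length parametrization, and fix the orientation so the warping becomes $e^{2x^1}$. The only cosmetic difference is ordering: the paper first shows $|\Na u|$ is constant on each level set directly from $\bar\Na\bar\Na u=-\,|\Na u|\,g+\Na\Na u=0$ applied to tangential directions and then writes the warped-product form, whereas you extract the constancy of $|\Na u|$ from the split form plus umbilicity, which amounts to the same computation.
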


\begin{proof}
Let $u$ be the spacetime harmonic function in Lemma \ref{PDEcube2}.  By \eqref{comparison} and $k=g$, we have $H=|\pi^{T}(\nu,\cdot)|=0$ on $F$. Moreover, by \eqref{spacetimeHessian0},  on $F$,
\be\label{vanishing2ndff}
\begin{split}
\Pi(N,N)
=&\la \Na_{\frac{\Na u}{|\Na u|}} \nu, \frac{\Na u}{|\Na u|} \ra\\
=&\frac{-\Na \Na u (\Na u,  \nu)}{|\Na u|^2}\\
=&g(N, \nu)\\
=&0,
\end{split}
\ee
where $\Pi$ denotes the second fundamental form of $\p M$ with respect to the outward normal $\nu$. 
On the other hand, on each $\S_t$,  let $X\in T{\S_t}$
\be
\begin{split}
&\Na_{X}|\Na u|\\
=&\frac{\Na \Na u (\Na u,  X)}{|\Na u|^2}\\
=&-g(N,X)\\
=&0.\\
\end{split}
\ee

From the proof of Lemma \ref{PDEimplication}, we can first conclude the following. Let  $\S_t=\{u^{-1}(t)\}$, then, 
\begin{enumerate}
\item
$M$ is smoothly foliated by $\S_t, t\in[0,1]$. On each $\S_t$, the following properties are satisfied.  
\begin{enumerate}

\item $h_{\Sigma_t}+g_{\Sigma_t}=0$, where $h_{\S_t}$ denotes the second fundamental from of $\S_t$ with respect to $N=\frac{\Na u}{|\Na u|}$. In particular, 
$\S_t$ is a free boundary stable totally spacetime geodesic MOTS (horosphere).    
\item The dihedral angles of the edges are all equal to those of $P_0$, $\kappa_{\p \S_t}=0$ and $R_{\S_t}=0$,  in particular, each level set is isometric to $P_0$ up to scaling. 
\item $|\Na u||_{\S_t}$ is constant.  
\end{enumerate}
\item
$R^M=-6$ on $M$.  
\item The dihedral angles between $T$ and $F$ and those between $B$ and $F$ are everywhere $\pi/2$.  
\item On $\p M$, 
\begin{enumerate}
\item $H=-2$ on $T$ and $H=2$ on $B$.
\item $H= |\pi^T(\nu,\cdot)|=0$ on $F$.
\item $\Pi=0$ on $F$. 
\end{enumerate}
\end{enumerate} 
Similar to the proof of Theorem \ref{cuberigidity}, we are going to consider the flow generated by $\frac{\Na u}{|\Na u|^2}$.  Since $|\Na u|$ is constant on each level set $\S_t$,  we can make a change of coordinate to express $g$ in the following form on $M$,  
\be
g=ds^2+ f(s)\delta_{ij} dx^idx^j,  
\ee
where $f$ is a function depending on $s$ only. Then, we consider on each level set with respect to the $\p_s$ direction, 
\be
\begin{split}
-2=H(s)=\frac{1}{f(s)}{\delta}^{ij}\frac{1}{2}\p_s( f(s)\delta_{ij})=\frac{\p_s f(s)}{f(s)}. 
\end{split}
\ee
We have 
\be
g=ds^2+ e^{-2s+C}\delta_{ij} dx^i dx^j. 
\ee
Note that since $\p_s$ is pointing in the decreasing $x^1$ direction, after a change of direction,  we can see that $(M,g,g)$ is isometric to a parabolic prism in $(\mathbb{H}^3,g_{\mathbb{H}} )$.  
\end{proof}

\subsection{$k\equiv0$ Riemmanian/ time-symmetric case}Similarly, for $k\equiv0$, we can use the level set method to conclude the following, extending the result in \cite{CK}. 
\begin{cor}(cf. \cite{Li1} Theorem 1.6)\label{Euclideanprism}
Let $(M^3,g)$ be a manifold of type $P$ which simultaneously satisfies: 
\begin{enumerate}
\item $R\geq 0$ in $M$, 
\item $H\geq 0$ on $\p M$,   
\item everywhere the dihedral angles between two faces of $M$ are less than or equal to those of $P$, \end{enumerate}
where $H$ is computed with respect to the unit outward normal of $M$. Then $(M,g)$ is isometric to a Euclidean prism.  
\end{cor}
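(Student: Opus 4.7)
The plan is to specialize Lemma \ref{PDEimplication} to the time-symmetric setting and then extract a global Riemannian splitting from the vanishing Hessian of the harmonic function. With $k\equiv 0$ we have $\pi\equiv 0$, $K=\tr_g k=0$, $\mu=\tfrac{1}{2}R$, and $J\equiv 0$, so hypothesis (1) becomes $R\geq 0$ and hypothesis (2) becomes $H\geq 0$; hypothesis (3) automatically forces the $T$--$F$ and $B$--$F$ dihedral angles to be at most $\pi/2$, because those angles in the model prism $P=[0,1]\times P_0$ are exactly $\pi/2$. The ``spacetime'' harmonic function produced by Lemma \ref{PDEcube2} is therefore an ordinary harmonic function $u$ on $M$ with $u|_B=0$, $u|_T=1$, $\p_\nu u|_F=0$.

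Invoking Lemma \ref{PDEimplication} with $k\equiv 0$ gives all the expected rigidity consequences at once: $M$ is smoothly foliated by the regular level sets $\S_t=u^{-1}(t)$; the spacetime Hessian identity $\Na\Na u+|\Na u|k=0$ reduces to $\Na\Na u\equiv 0$; each $\S_t$ is totally geodesic (since $h_{\S_t}=-k_{\S_t}=0$), flat, has vanishing geodesic curvature on its boundary, and has vertex angles matching those of $P_0$; and on $\p M$ one has $R\equiv 0$, $H\equiv 0$, with all dihedral angles equal to the Euclidean ones.

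To finish, I would observe that $\Na\Na u\equiv 0$ implies that $|\Na u|$ is constant on $M$ (differentiate $|\Na u|^2$ and use the vanishing Hessian) and that $\Na u$ is a globally parallel vector field. Normalizing so that $|\Na u|\equiv 1$ and setting $\p_s:=\Na u$, the fact that each leaf $\S_t$ is totally geodesic means that the flow of $\p_s$ is a Riemannian isometry between leaves, so the metric splits globally as a Riemannian product $g=ds^2+g_{\S_0}$ on $[0,L]\times \S_0$ for some $L>0$. As $\S_0$ is a compact flat polygon with totally geodesic boundary and vertex angles equal to those of $P_0$, a direct Gauss--Bonnet / developing argument identifies $(\S_0,g|_{\S_0})$ with the Euclidean polygon $P_0$ up to a constant rescaling, so $(M,g)$ is isometric to a Euclidean prism. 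I expect the only substantive step to be Lemma \ref{PDEimplication} itself (which has already been established); once that is in hand, the time-symmetric case reduces to the splitting argument above.
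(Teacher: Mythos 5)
Your proposal is correct and follows essentially the same route the paper intends: the paper proves this corollary by specializing the level-set machinery (Lemma \ref{PDEcube2} and Lemma \ref{PDEimplication}, which already yields the split form $g=\frac{1}{|\Na u|^2}dt^2+f(t)g_{Euc}$) to $k\equiv 0$, exactly as you do. Your use of $\Na\Na u\equiv 0$ to get a parallel gradient and hence the product splitting is just the natural time-symmetric shortcut for the same argument carried out via the mean-curvature ODE in Corollary \ref{parabolichyperbolic}, so there is no substantive difference.
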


\section{Polyhedra and Spacetime Positive Mass Theorem}\label{localized polyhedra SPMT}
As discussed, the dihedral rigidity conjecture and the spacetime positive mass theorem can respectively be regarded as a characterisation of the energy conditions on a compact polyhedron and a non-compact asymptotically flat initial data set.  It is expected that a bridge linking these 2 concepts should exist.

\subsection{Dihedral angle deficit as a localisation of mass}
Indeed, a connection is established in \cite{M2} and \cite{MP} for the time-symmetric $(k\equiv0)$ case.  From this perspective, the cubic rigidity theorem (Theorem \ref{maindihedral}) can be seen as a localisation of the following proposition, which is a direct consequence of the spacetime positive mass theorem (\cite{EHLS},\cite{W},\cite{PT}) (below, $n$ denotes the dimension in which the spacetime positive mass theorem theorem holds), Theorem 1.1 in \cite{MP} and well-definedness of ADM energy-momentum vector by Proposition 4.1 in \cite{Bartnik}.  
\begin{thm}\label{Bridge}(cf. \cite{MP} Theorem 1.2)
Let $(M^n, g, k)$ be an asymptotically flat initial data set satisfying the dominant energy condition.  Let $\{P_k\}$ denote a sequence of Euclidean polyhedra satisfying
conditions in \cite{MP} Theorem 1.1.  Let $\vec{a}=a^i {\p_i}$, where $\sum_{i=1}^n (a^i)^2=1$, then
\be
\begin{split}
\lim_{k\to\infty}\left(-\ \int_{\mF(\p P_k)} H  \,d \sigma+\ \int_{\mF(\p P_k)}\pi(\vec{a},\nu) \,d \sigma+\ \int_{\mE(\p P_k)} ( \alpha - \bar \alpha) \, d \mu \right) \ge 0,
\end{split} 
\ee 
where $\mF(\p P_k)$ and $\mE(\p P_k)$ denote the faces and edges of $P_k$ respectively. 

\

In particular, for any fixed Euclidean polyhedron $P$,

\be
\begin{split}
\lim_{r\to\infty}\left(-\ \int_{\mF(r)} H  \,d \sigma+\ \int_{\mF(r)} \pi(\vec{a},\nu) \,d \sigma+\ \int_{\mE(r)}  ( \alpha - \bar \alpha) \, d \mu  \right) \ge 0,
\end{split} 
\ee 
where $\mF(r)$ and $\mE(r)$ are the faces and edges of the polyhedron $P(r)$ obtained by scaling
$P$ by a large constant factor $r$, $\nu$ is the outward unit normal w.r.t \,$g$ on corresponding faces and, $\alpha$ and $ \bar \alpha$ respectively denote the dihedral angle with respect to $g$ and $g_{Euc}$. 
\end{thm}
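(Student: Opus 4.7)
The plan is to reduce the inequality to the spacetime positive mass theorem by recognizing the three boundary integrals as polyhedral flux versions of the ADM energy--momentum. The quantity inside the limit splits naturally into a geometric piece (mean curvature plus dihedral angle deficit, computing $E$) and a kinematic piece (the $\pi$ flux, computing $\vec a\cdot P$).

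For the geometric piece, I would apply Theorem 1.1 of \cite{MP}. That result involves only the Riemannian datum $g$ (not $k$) and, under the stated hypotheses on $\{P_k\}$, yields
\bee
\lim_{k\to\infty}\left(-\int_{\mF(\p P_k)}H\,d\sigma+\int_{\mE(\p P_k)}(\alpha-\bar\alpha)\,d\mu\right)=2c(n)\,E,
\eee
where the factor $2c(n)$ matches the normalization $E=\frac{1}{2c(n)}\lim\int(g_{ij,i}-g_{ii,j})\nu^j$. The admissibility conditions on $\{P_k\}$ from \cite{MP} are precisely what makes this polyhedral substitution legitimate.

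For the kinematic piece, I would invoke Proposition 4.1 of \cite{Bartnik}, which establishes the well-definedness of the ADM momentum as a geometric invariant. The proof adapts to the polyhedral exhaustion: the edges $\mE(\p P_k)$ are $(n-2)$-dimensional inside $\mF(\p P_k)$ and so contribute no $\pi$-flux, while the identity $J=\operatorname{div}_g\pi\in L^1$ together with the decay $k_{ij}=O^1(|x|^{-q-1})$ for $q>\frac{n-2}{2}$ controls the error between the polyhedral flux and the spherical flux of comparable radius. This yields
\bee
\lim_{k\to\infty}\int_{\mF(\p P_k)}\pi(\vec a,\nu)\,d\sigma=c(n)\,\vec a\cdot P.
\eee

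Adding the two identifications, the full limit equals $c(n)(2E+\vec a\cdot P)$. Since $\sum(a^i)^2=1$ implies $\vec a\cdot P\ge-|P|$, and Theorem \ref{SPMT} (spacetime positive mass theorem) gives $E\ge|P|$, we obtain $2E+\vec a\cdot P\ge 2E-|P|\ge E\ge 0$, proving the first inequality. The second assertion, for a fixed Euclidean polyhedron $P$ and its dilations $P(r)$ as $r\to\infty$, is the special case $P_k:=P(k)$. The main obstacle is compatibility of the two limit statements along the same exhaustion: the $\{P_k\}$ admissible in \cite{MP} Theorem 1.1 satisfy rather specific hypotheses on face regularity and angle control, and one must verify that these same exhaustions support the momentum flux limit in \cite{Bartnik}. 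This reduces to checking that the outward normal $\nu$ is defined face-by-face (edges being of vanishing codimension-one measure), that the face areas and angle defects scale like $r^{n-1}$ and $r^{n-2}$ respectively, and that the $\pi$-flux through $\p P_k$ converges at the standard asymptotic-flatness rate.
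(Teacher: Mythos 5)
Your proposal follows essentially the same route as the paper: identify the mean-curvature plus dihedral-angle-deficit terms with the ADM energy via \cite{MP} Theorem 1.1, identify the $\pi$-flux over the faces with $c(n)\langle \vec{a},P\rangle$ using well-definedness of the ADM momentum (\cite{Bartnik}), and conclude from the spacetime positive mass theorem, with the scaled polyhedra $P(r)$ handled as a special case of the exhaustion. The one discrepancy is your constant in the first identification: the limit of $-\int H\,d\sigma+\int(\alpha-\bar\alpha)\,d\mu$ is $c(n)E$, not $2c(n)E$ (the polyhedral formula is the analogue of $c(n)E=\lim\int(H_0-H)$, the factor $2$ in the definition of $E$ being absorbed in passing from $(g_{ij,i}-g_{ii,j})\nu^j$ to mean curvature), but this misquotation is harmless here since with either constant the desired inequality follows from $E\ge |P|\ge -\langle\vec{a},P\rangle$.
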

\begin{proof}
By \cite{MP} Theorem 1.1, we know that as $k\to \infty$, 
$$c(n)E=-\ \int_{\mF(\p P_k)} H \,d \sigma+\ \int_{\mE(\p P_k)} ( \alpha - \bar \alpha) \, d \mu +o(1).$$
On the other hand, 
$$-c(n)|P |\leq c(n)\la \vec{a}, P \ra =\ \int_{\mF(\p P_k)}\pi(\vec{a},\nu) \,d \sigma+o(1).$$
Therefore, the result follows from the spacetime positive mass theorem.  
\end{proof}
\begin{rem}
As pointed out by \cite{MP} $Remark$ 1.1, $\{P_k\}$ need not be convex. 
\end{rem}

\subsection{Application to the spacetime positive mass theorem } (cf.\cite{Li1} Section 5)
In this section, we observe that if a general version of Lemma \ref{PDEimplication} holds, then we can prove the spacetime positive mass theorem (Theorem \ref{SPMT}) with Lohkamp's construction of $(\mu-|J|_g)>0$-island (Section 2 in \cite{Lohkamp}).  First, here is proposed a general version of Lemma \ref{PDEimplication}. 
\begin{conj}\label{generalPDEimplication} Let $n\geq 3$, $P^n$ be a Euclidean prism ($P_0\times[0,1]^{n-2}$) and $(\Omega^n,g,k)$ be an initial data set admitting a degree one map onto $P$.  
Further assume that $(\Omega,g,k)$ simultaneously satisfies: 
\begin{enumerate}
\item the dominant energy condition, 
\item the boundary dominant energy condition, 
\item the dihedral angles of $\O$ are everywhere less than or equal to those of $P$. 
\end{enumerate} 
Then, on $\O$, 
$$\mu-|J|_g=0.$$
\end{conj}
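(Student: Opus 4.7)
The plan is to mirror the $3$-dimensional argument of Lemma \ref{PDEimplication} and close the higher-dimensional case by induction on $n$, with $n=3$ (Lemma \ref{PDEimplication} itself) serving as the base and dimension reduction via spacetime harmonic level sets providing the inductive step. First I would establish a higher-dimensional analogue of Lemma \ref{PDEcube2}, producing a function $u\in C^{0,\alpha}(\O)\cap C^{1,\alpha}_{loc}(\O\setminus(\bar T\cup\bar B))\cap C^{2,\alpha}_{loc}(\O\setminus\bar \mE)\cap W^{3,p}_{loc}(\mathring \O)$ solving $\Lp u+K|\Na u|=0$ with $u=0$ on a distinguished bottom face $B$, $u=1$ on the opposite top face $T$, and $\p_{\nu}u=0$ on the remaining faces $F$. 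The regularization-plus-implicit-function-theorem scheme of Appendix \ref{Existence of spacetime harmonic functions 2} should go through provided the dihedral angles of $\O$ are strictly less than $\pi$, since the corner regularity at horizontal and vertical edges only uses dimension-independent boundary Harnack and Schauder estimates.

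Next I would repeat the Bochner-type manipulation of Lemmas \ref{IFC} and \ref{cubic integration formula}, treating each face via Proposition \ref{BoundaryTerms}, to obtain the inequality
\begin{equation*}
\int_{\O}\frac{|\o\Na\o\Na u|^2}{2|\Na u|}+(\mu-|J|)|\Na u|\,dV+\int_{\p\O}\bigl(H-|\pi^{T}(\cdot,\nu)|\bigr)|\Na u|\,d\sigma\le\int_0^1\left(\frac{1}{2}\int_{\S_t}R_{\S_t}\,dA+\int_{\p\S_t}\kappa\,d\tau\right)dt,
\end{equation*}
where each level set $\S_t=\{u=t\}$ is an $(n-1)$-dimensional polyhedron modelled on $P_0\times[0,1]^{n-3}$ whose lateral dihedral angles equal those of $\O$ by the Neumann condition on $F$. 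The left-hand side is nonnegative by DEC and BDEC, so the conjecture reduces to bounding the right-hand side by zero. In the base case $n=3$ this is Gauss-Bonnet combined with the dihedral-angle hypothesis, exactly as in Lemma \ref{cubic integration formula}. For $n\ge 4$ I would close the argument inductively: by the stability-form analysis of Subsection \ref{StableFBMOTS}, each $\S_t$ is a stable free-boundary MOTS, and positivity of $G(1,1)$ converts $\tfrac{1}{2}\int_{\S_t}R_{\S_t}$ into an upper bound involving $\mu_{\S_t}+J_{\S_t}(N)+\tfrac{1}{2}|h_{\S_t}+k_{\S_t}|^2$. Applying the inductive hypothesis to the induced data $(\S_t,g|_{\S_t},k|_{\S_t})$ would then force the right-hand side to vanish, and unpacking the resulting equalities (via a further spacetime harmonic function on $\S_t$ for the dimension drop) produces $\mu-|J|_g=0$ on $\{|\Na u|>0\}$, which extends to all of $\O$ by the regularization limit of Lemma \ref{IFC}.

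The main obstacle I anticipate is verifying that the induced data $(\S_t,g|_{\S_t},k|_{\S_t})$ genuinely fits the hypotheses of the conjecture one dimension down. This requires (i) extracting an induced dominant energy condition on $\S_t$ from the spacetime DEC on $\O$ together with the free-boundary MOTS relation $h_{\S_t}=-k|_{\S_t}$ and the Gauss-Codazzi equations; (ii) propagating the BDEC of $\O$ along $F$ down to a BDEC on $\p\S_t\subset F$ using the identity $\Pi(N,N)=\pi(N,\nu)$ from \eqref{F2ndff} and the Neumann condition; and (iii) confirming that the dihedral-angle condition descends to $\S_t$, which should follow because the foliation is perpendicular to $F$ and the lateral edges of $\S_t$ lie in $F$. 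A secondary, technical obstacle is the low regularity of $u$ at edges and vertices: Sard's theorem must be applied at $W^{3,p}_{loc}$ regularity, and the blow-up analysis of Proposition \ref{EH} (together with the exhaustion from the proof of Lemma \ref{IFC}) must be carried out at every corner stratum of the higher-dimensional prism, possibly requiring a thickened family of level sets in the spirit of \cite{EHLS} to handle the singular set $\{|\Na u|=0\}$. Once these inductive and regularity issues are resolved, the chain of inequalities is forced to be an equality and $\mu-|J|_g=0$ on $\O$ follows.
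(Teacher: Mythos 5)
There is a fundamental mismatch here: the statement you are proving is stated in the paper as a \emph{conjecture}, and the paper explicitly declines to prove it, remarking immediately after its statement that it ``cannot be shown by spacetime harmonic functions since the level set approach only works in 3 dimensional cases and requires vanishing second homology.'' Your proposal is built precisely on the method the paper rules out, and the two reasons the paper gives are exactly where your argument breaks. First, the dimension reduction does not iterate. The inequality you import from Lemmas \ref{IFC} and \ref{cubic integration formula} bounds the bulk term by $\int_0^1\bigl(\tfrac12\int_{\S_t}R_{\S_t}+\int_{\p\S_t}\kappa\bigr)dt$, and in $n=3$ this is killed by Gauss--Bonnet plus the angle hypothesis; for $n\ge4$ there is no such control, and your plan to recurse runs into the fact that the stability form $G$ on $\S_t$ controls the \emph{ambient} quantity $\mu+J(N)+\tfrac12|h_{\S_t}+k_{\S_t}|^2$ via the Gauss equation, not the constraint quantities $\mu_{\S_t}$, $J_{\S_t}$ of the induced pair $(g|_{\S_t},k|_{\S_t})$. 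The restriction $k|_{\S_t}$ is not the second fundamental form of $\S_t$ in any spacetime attached to the slice, so ``DEC for the induced data'' is not a consequence of the ambient DEC, and the boundary DEC on $\p\S_t$ (geodesic curvature versus an induced $\pi_{\S_t}(\cdot,\nu)$ term) is likewise not supplied by \eqref{F2ndff} and the Neumann condition. Your own listed obstacle (i)--(iii) is therefore not a technicality to be checked but the missing content of the proof; this is the same reason the Schoen--Yau style descent for initial data sets does not naively iterate and why \cite{EHLS} proves the spacetime PMT by a different MOTS argument rather than repeated level-set reduction.

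Second, the topological hypothesis of the conjecture is only that $\Omega$ admits a degree one map onto $P$ (this generality is what the Lohkamp-island application in Section \ref{localized polyhedra SPMT} requires, since the island region can have arbitrary topology). Then regular level sets of $u$ need not be connected, need not be homeomorphic to $P_0\times[0,1]^{n-3}$, and the step ``each component of a level set is homeomorphic to $P_0$'' from Lemma \ref{cubic integration formula} — which uses the prism topology of a type $P$ data set — is unavailable. Even your proposed base case is affected: Lemma \ref{PDEimplication} is proved for type $P$ data sets (Lipschitz diffeomorphic to the prism), not for degree-one maps, which is why the paper's $3$-dimensional PMT application does not invoke the conjecture directly but first passes to a generalized exterior region with $H_2(\hat M,\p\hat M,\mathbb{Z})=0$ following \cite{HKK}, and only then runs the level-set argument with the extra boundary components handled by Lemma \ref{ND}. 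So the proposal does not close the conjecture; to make progress one would need either a genuinely different mechanism in dimensions $n\ge4$ (e.g.\ a foliation by stable capillary/free boundary MOTS produced variationally as in \cite{ALY2},\cite{EGM}) or a topological reduction replacing the degree-one hypothesis, neither of which is sketched here.
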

This statement cannot be shown by spacetime harmonic functions since the level set approach only works in 3 dimensional cases and requires vanishing second homology.  Therefore, it would be of interest to seek a (dense) foliation of stable MOTS with boundary by other means under certain assumptions (e.g. \cite{ALY2},\cite{EGM}). 

\begin{defn}(\cite{Lohkamp} Definition 2.8) 
An asymptotically flat initial data set $(M^n, g, k)$ is called a $(\mu-|J|_g)>0$-island if there exists a non-empty open set $U\subset  M$ with compact closure such that 
\begin{enumerate}
\item $\mu-|J|_g>0$ on $U$,  and
\item $(M\setminus U,  g,  k)\equiv (\R^n\setminus B_r(0), g_{Euc},0)$. 
\end{enumerate}
\end{defn}

\begin{prop}
Conjecture \ref{generalPDEimplication} implies the spacetime positive mass theorem. 
\end{prop}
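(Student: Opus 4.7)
The plan is to argue by contradiction. Suppose the spacetime positive mass theorem fails, so there exists an asymptotically flat initial data set $(M^n,g,k)$ satisfying the dominant energy condition for which $E<|P|$ on some end. I will convert this strict inequality into strictly positive mass density supported on a compact region, inscribe that region in a rectangular prism whose collar is exactly Euclidean, and then apply Conjecture \ref{generalPDEimplication} to the prism to force the mass density to vanish, delivering a contradiction.

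The first step is to invoke Lohkamp's density-type construction (\cite{Lohkamp} Section 2): from any asymptotically flat DEC initial data set with $E<|P|$ one may produce, by a compactly supported deformation, an asymptotically flat $(\mu-|J|_g)>0$-island $(\wt M,\wt g,\wt k)$. By the very definition of an island there is a non-empty open $U$ with compact closure such that $\mu-|J|_{\wt g}>0$ on $U$, while $(\wt M\setminus U,\wt g,\wt k)\equiv(\R^n\setminus B_r(0),g_{Euc},0)$ identically. I will take this construction as a black box.

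Exploiting the flat collar, I then choose a large rectangular prism $\Omega=[-L,L]^n\subset\wt M$ with $\bar U\subset\mathring\Omega$ and $\partial\Omega\subset\wt M\setminus U$. Viewing $(\Omega,\wt g|_\Omega,\wt k|_\Omega)$ as an initial data set of prism type $P=[-L,L]^2\times[-L,L]^{n-2}$ via the identity (trivially a degree-one map), the three hypotheses of Conjecture \ref{generalPDEimplication} are immediate: DEC holds throughout $\Omega$; near $\partial\Omega$ the data is Euclidean, so $H\equiv 0$ and $k|_{\partial\Omega}\equiv 0$, which gives the boundary DEC $H\ge|\pi(\cdot,\nu)|$ in the form $0\ge 0$; and every dihedral angle of $\Omega$ equals $\pi/2$, matching the model prism exactly. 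Applying the conjecture yields $\mu-|J|_{\wt g}\equiv 0$ on $\Omega$, contradicting $\mu-|J|_{\wt g}>0$ on the non-empty set $U\subset\mathring\Omega$. Hence $E\ge|P|$.

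The main obstacle is not this reduction, which is soft once the flat collar is in hand, but rather Conjecture \ref{generalPDEimplication} itself. As the excerpt notes, the spacetime harmonic function argument that established Theorem \ref{maindihedral} in dimension three uses the level-set foliation and vanishing second homology in an essential way and does not extend. Producing the required rigidity in higher dimensions would likely demand a different mechanism, such as a dense foliation by stable free-boundary MOTS analogous to the constructions of \cite{ALY2} or \cite{EGM}; but once the conjecture is granted, the Lohkamp reduction above turns polyhedral rigidity into the spacetime positive mass theorem.
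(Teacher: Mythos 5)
Your argument is essentially the paper's own proof: assume the theorem fails, pass (via the Christodoulou--O'Murchadha boost reduction folded into Lohkamp's construction) to a $(\mu-|J|_g)>0$-island, enclose the island region $U$ by the boundary of a large Euclidean prism sitting in the exactly flat exterior, and apply Conjecture \ref{generalPDEimplication} to force $\mu-|J|_g\equiv 0$ there, contradicting positivity on $U$. One small correction: the map to the model prism cannot be ``the identity,'' since the region $\ti\Omega$ bounded by $\p P$ contains $U$ and need not be diffeomorphic to a prism; the paper instead takes the degree-one map sending $\ti\Omega\setminus\ti U$ to $P\setminus\{0\}$ and collapsing $\ti U$ to $\{0\}$, which is precisely why the conjecture is formulated for degree-one maps rather than for data of type $P$.
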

\begin{proof}
Let $(M^n,g,k)$ be an asymptotically flat initial data set satisfying the dominant energy condition.  Assume on the contrary that, w.l.o.g. by \\Christodoulou and O’Murchadha's boost argument (\cite{CO}), $E<0\leq |P|$.  From \cite{Lohkamp} Section 2, one can construct a $(\ti \mu-|\ti J|_{\ti g})>0$-island $(\ti M,\ti g,\ti k)$.  

Then, consider a large scaling of $P^n$ such that $\p P$ can be isometrically embedded into $(\ti M\setminus \ti U,  \ti g,  \ti k)$ and encloses $\ti U$.  Let $\ti \O$ denote the region in $\ti M$ bounded by $\p P$.  $(\ti \O, \ti g, \ti k)$ clearly satisfies the assumptions of Conjecture \ref{generalPDEimplication}, where a degree one map is taking $\ti \O \setminus \ti U$ to $P\setminus \{0 \}$ and $\ti U$ to $\{ 0\}$.  Therefore, particularly, $\ti\mu -|\ti J|_{\ti g}=0$ on $\ti U$. A contradiction arises.  
\end{proof}

\

In particular, for the 3 dimensional case, Theorem \ref{SPMT} can be proved by considering a large cube as follows.  Assume on the contrary that the spacetime positive mass theorem does not hold, as aforementioned, then there exists a $(\ti \mu-|\ti J|_{\ti g})>0$-island $(\ti M,\ti g, \ti k)$. Now, the construction of the generalised exterior region (\cite{HKK} Section 2) can be carried out on $(\ti M,\ti g, \ti k)$. In particular,  there exists $(\hat M, \hat g, \hat k)$ with boundary $\p \hat M$ composed of MOTS and MITS such that $H_2(\hat M, \p \hat M, \mathbb{Z})=0$. Moreover, there exists a non-empty open set $\hat U\subset \hat M$ with compact closure such that 
\begin{enumerate}
\item $\hat \mu-|\hat J|_{\hat g}\geq 0$ on $\hat U$, 
\item $(\hat M\setminus \hat U, \hat g, \hat k)=(\ti M\setminus \ti U,  \ti g,  \ti k) \equiv (\R^n\setminus B_r(0), g_{Euc},0)$.  
\end{enumerate} 
Then, let $P^3$ be a cube. As aforementioned, consider a large scaling of $P$ such that $\p P$ can be isometrically embedded into $(\hat M\setminus \hat U,  \hat g,  \hat k)$ and encloses $\hat U$.  Let $\hat \O$ denote the region in $\hat M$ bounded by $\p P$.  Hence, we have $H_2(\hat \Omega, \p \hat M, \mathbb{Z})=0$. 
The arguments in Appendix \ref{Existence of spacetime harmonic functions} can be modified correspondingly to show the following lemma.  
\begin{lma}\label{PDEcube3}
Let $\vec c \in \R^m$, where $m$ denotes the number of components of $\p \hat M$.  Then there exists a non-negative spacetime harmonic function $u_{\vec{c}} \in C^{2,\alpha}(\hat \O)\cap W^{3,p}_{loc}(int \, \hat \O)$ such that 
\begin{enumerate}
\item $\hat \Lp u_{\vec{c}} + \hat K|\hat \Na u_{\vec{c}}|=0$ in $int\,  \hat \O$,
\item $u_{\vec{c}}=c_i$ on $\p_i \hat M$ for $i=1,... m$, where $\p_i \hat M$ denotes the $i$-th component of $\p \hat M$, 
\item $u_{\vec{c}}=0$ on $B$ and $u_{\vec{c}}=1$ on $T$,
\item $\p_{\nu} u_{\vec{c}} =0$ on $F$, 
\end{enumerate}
where $\hat K:=tr_{\hat g}\hat k$, $T, B, F$ and $\mE$ denote the top, the bottom, the side faces and the edges of $\p \hat \O \cap (\hat M\setminus \hat U)$ respectively. 
\end{lma}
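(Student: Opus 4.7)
The plan is to adapt the regularization-plus-implicit-function-theorem argument from Appendix \ref{Existence of spacetime harmonic functions} (used for Lemma \ref{PDEcube2}) to this modified setting. Note that $\hat\Omega$ has boundary $\partial P \cup \partial \hat M$, where $\partial P = T \cup B \cup F$ carries the usual cubic edges and vertices, while each component of $\partial_i\hat M$ is already smooth (being a MOTS or MITS component of the generalized exterior region). Consequently the only geometric regularization needed is at the cubic edges and vertices.

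First, approximate $\hat\Omega$ by smooth subdomains $\hat\Omega_r$ obtained by rounding off the edges $\mathcal{E}$ and the vertices of $\partial P$, keeping the faces $T$, $B$, $F$, and the components $\partial_i\hat M$ identified. For $\varepsilon > 0$ small and $s \in [0,1]$, consider the regularized family
\[
F_{\varepsilon,s}(u) := \hat\Delta u + s\,\hat K\sqrt{|\hat\nabla u|^2 + \varepsilon^2},
\]
subject to the prescribed mixed boundary conditions ($u = 0$ on $B$, $u = 1$ on $T$, $u = c_i$ on $\partial_i \hat M$, $\partial_\nu u = 0$ on $F$). At $s = 0$ this reduces to a linear mixed Dirichlet--Neumann problem, which admits a classical solution by standard elliptic theory. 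Proceed by the continuity method. Openness follows from the implicit function theorem since the linearization is uniformly elliptic and its mixed boundary value problem is Fredholm of index zero with trivial kernel (the homogeneous problem forces constants, excluded by the nontrivial Dirichlet data on $T$ and $B$). Closedness reduces to $C^{2,\alpha}$ a priori estimates uniform in $s$: a $C^0$ bound from the maximum principle (solutions are pinched between $\min\{0,\min_i c_i\}$ and $\max\{1,\max_i c_i\}$), a $C^1$ bound from a Bernstein-type computation applied to $|\hat\nabla u|^2$ that exploits the Neumann condition on $F$, and higher regularity from Schauder theory up to the smooth portions of the boundary.

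After solving the regularized problems, send $\varepsilon \to 0$ and $r \to 0$. Regularity across the cubic edges and vertices is handled exactly as in Appendix \ref{Existence of spacetime harmonic functions 2} and Proposition \ref{EH}: a blow-up argument combined with $W^{2,p}$ estimates, Schauder theory, boundary Harnack inequalities, and the results of \cite{AK80} yields $C^{1,\alpha}_{loc}$ regularity along the vertical edges (where the dihedral angle is $\pi/2$), Lipschitz/$C^{0,\alpha}$ regularity along the horizontal edges and near the vertices, and interior regularity $W^{3,p}_{loc}$. Non-negativity of $u_{\vec c}$ is then immediate from the maximum principle, provided $c_i \geq 0$.

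The main obstacle is obtaining a priori $C^1$ estimates that are uniform in the smoothing parameter $r$ in the presence of the additional Dirichlet components $\partial_i \hat M$. Specifically, one must verify that the Bernstein-type gradient bound persists in neighbourhoods where the (rounded) cubic faces approach the smooth internal boundary components, and that the interplay between the Neumann condition on $F$ and the Dirichlet condition on $\partial_i\hat M$ does not degrade the estimates near the interface regions. The prescribed constants $c_i$ introduce no genuinely new analytic difficulty beyond careful bookkeeping in the continuity-method framework, since they simply shift the range of admissible boundary values entering the maximum principle and the Bernstein computation.
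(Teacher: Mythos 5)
Your overall template (regularize the operator, run a continuity/IFT argument, pass to the limit) matches the paper's, but the way you handle the corners of $\p P$ is where the proposal breaks down. The paper does \emph{not} smooth the cubic edges to produce the solution: it exploits the fact that $\p P$ sits inside the region where $(\hat g,\hat k)=(g_{Euc},0)$ and all dihedral angles are exactly $\pi/2$, so an even reflection across the side faces $F$ (exactly as in Appendix \ref{Existence of spacetime harmonic functions}, Lemma \ref{solvabliity}) converts the mixed Dirichlet--Neumann problem into a pure Dirichlet problem with regular coefficients on the reflected domain; the smooth internal components $\p_i\hat M$, carrying constant data $c_i$, are simply carried along (quadrupled) and cause no new difficulty, and the symmetry/uniqueness argument shows the Dirichlet solution restricts to a solution of the mixed problem, yielding the stated $C^{2,\alpha}(\hat\O)\cap W^{3,p}_{loc}$ regularity. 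Your alternative --- rounding off the edges and vertices of $\p P$ into smooth domains $\hat\O_r$ --- is not even well posed as written: on the newly created rounded strips joining a Dirichlet face ($T$ or $B$) to a Neumann face $F$ you never say which boundary condition is imposed, and any choice produces a Zaremba-type Dirichlet--Neumann junction on a \emph{smooth} boundary, for which solutions generically behave like $\mathrm{dist}^{1/2}$ near the junction. Consequently the linearized problem is not invertible between the H\"older spaces you need (your ``Fredholm of index zero'' claim is exactly what Lemma \ref{solvabliity} proves, and it proves it \emph{by} the reflection you discard), and uniform-in-$r$ $C^1$ or $C^{2,\alpha}$ estimates near those interfaces fail; your closing paragraph concedes precisely this point (``the main obstacle is obtaining a priori $C^1$ estimates that are uniform in the smoothing parameter $r$'') without resolving it, so the proposal does not actually establish the lemma.

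Two smaller points. First, the Bernstein-type gradient bound you invoke is not needed and is not what the paper uses: after reflection the regularized equation $G_\delta(u)=0$ gives $\Lp u$ bounded in terms of $|\Na u|$, and the $C^{2,\alpha}$ bound follows from Schauder estimates for the Dirichlet problem plus interpolation, uniformly in $\delta$, as in \eqref{regularisedschauderestimate}. Second, your remark that nonnegativity needs $c_i\ge 0$ is fair (the constants are ultimately chosen as in \cite{HKK} Section 5), but that is bookkeeping; the essential missing ingredient is the reduction of the mixed problem to a Dirichlet problem at the $\pi/2$ edges, which is what makes both the linear solvability and the uniform estimates available.
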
 
Furthermore, note that the arguments from \cite{HKK} Section 5 can be applied to $ \hat \O$ as we reduce the mixed boundary value problem into a Dirichlet problem in Appendix \ref{Existence of spacetime harmonic functions}. The following lemma thus  holds. 

\begin{lma}\label{ND}(\cite{HKK} Lemma 5.1)
Let $a_i \in\{-1,1\}$ for $i=1,2,...,m$. There exists a constant $\vec{c}$ such that for each $i$, there exists $y_i \in \p_i \hat M$ with $|\hat \Na u_{\vec{c}}(y_i)|=0$, and $(-1)^{a_i}(\p_\nu u_{\vec{c}})\geq 0$ on $\p_i \hat M$, where $\nu$ is the unit normal pointing out of $\hat M$.
\end{lma}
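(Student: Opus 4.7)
The plan is to follow the sliding/Perron-type argument of Lemma 5.1 in \cite{HKK}, adapted to the mixed Dirichlet--Neumann boundary condition on the cubic part $\p \hat \O \cap (\hat M \setminus \hat U)$. Observe first that since $u_{\vec c} \equiv c_i$ on each component $\p_i \hat M$, the gradient there is purely normal, so finding $y_i \in \p_i \hat M$ with $|\hat \Na u_{\vec c}(y_i)|=0$ is equivalent to finding a point where $\p_\nu u_{\vec c}$ vanishes. The goal therefore reduces to producing $\vec c \in \R^m$ such that, simultaneously for every $i$, one has $(-1)^{a_i}\p_\nu u_{\vec c} \geq 0$ on $\p_i \hat M$ while $\p_\nu u_{\vec c}$ vanishes at some point of $\p_i \hat M$.

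First, I would set up continuous dependence on the parameters: using the implicit function/Schauder machinery developed in Appendix \ref{Existence of spacetime harmonic functions} (as needed for Lemma \ref{PDEcube3}), the map $\vec c \mapsto u_{\vec c}$ is continuous into a function space strong enough that $\vec c \mapsto \p_\nu u_{\vec c} \in C^0(\p_i \hat M)$ is continuous for each $i$. Next, I would record the extremal behaviour as in \cite{HKK}: if $c_i$ is strictly larger than the maximum of $u_{\vec c}$ over $\p \hat \O \setminus \p_i \hat M$, then $u_{\vec c}$ attains its maximum only on $\p_i \hat M$, and the Hopf lemma applied to the quasilinear equation $\hat \Lp u + \hat K |\hat \Na u| = 0$ forces $\p_\nu u_{\vec c} > 0$ strictly on $\p_i \hat M$; symmetrically $\p_\nu u_{\vec c} < 0$ when $c_i$ is sufficiently small. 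Letting $c_i \to \pm \infty$ with the other entries fixed therefore realises both signs of the normal derivative on $\p_i \hat M$.

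The heart of the argument is the sliding step. Fix all $c_j$ for $j \neq i$ and define
\[
c_i^\star = \inf\{\,c_i \in \R : (-1)^{a_i}\p_\nu u_{\vec c} \geq 0 \text{ on } \p_i \hat M\,\}
\]
(or the analogous supremum, depending on the sign of $a_i$). By the extremal behaviour and continuity, $c_i^\star$ is finite, and at this critical value the sign condition still holds, yet it cannot hold strictly—otherwise, a small perturbation of $c_i$ past $c_i^\star$ would preserve it, contradicting minimality—so $\p_\nu u_{\vec c^\star}$ must vanish at some point of $\p_i \hat M$. To achieve this simultaneously for all $i$, I would either iterate the sliding coordinate by coordinate, arguing at each step that the previously achieved vanishing and sign conditions persist, or invoke a topological-degree argument applied to the continuous map $\vec c \mapsto \bigl(\min_{\p_i \hat M}(-1)^{a_i}\p_\nu u_{\vec c}\bigr)_{i=1}^m$ from a large cube in $\R^m$ into $\R^m$, using the extremal behaviour to control its degree on the boundary of that cube.

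The main obstacle will be the simultaneous persistence in the iterative sliding, because adjusting $c_i$ alters $u_{\vec c}$ globally and therefore changes $\p_\nu u$ on every other component of $\p \hat M$. In the Dirichlet-only setting of \cite{HKK} this is handled by a comparison principle applied to differences of solutions; here the quasilinearity of $\hat \Lp u + \hat K|\hat \Na u|=0$ forces a linearisation step — the difference of two solutions satisfies a linear elliptic equation with bounded coefficients to which one applies a mixed Neumann (on $F$) / Dirichlet (on $T \cup B \cup \p \hat M$) maximum principle. A subsidiary technical point is the reduced regularity of $u_{\vec c}$ along the edges $\mE$ and along the seam $\p_i \hat M \cap (T\cup B)$; this is circumvented by analysing the sign and vanishing of $\p_\nu u_{\vec c}$ only on the smooth relatively open portion of each $\p_i \hat M$ and appealing to continuity of $u_{\vec c}$ up to the edges to extend the conclusion.
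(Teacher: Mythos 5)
Your proposal is essentially the argument the paper relies on, but the paper itself does not reprove this lemma: it invokes \cite{HKK} Lemma 5.1 directly, remarking only that the mixed Dirichlet--Neumann problem has already been reduced to a Dirichlet problem by the reflection/doubling of the appendices, so that the Section 5 arguments of \cite{HKK} apply as they stand. You instead adapt the continuity/Hopf/sliding argument to the mixed problem directly, and that route is sound; the persistence issue you single out is resolved exactly as you indicate, since the difference $w$ of two solutions with different constants satisfies a linear equation $\Delta w + K\, b\cdot \Na w=0$ with $|b|\le 1$, which gives that $\p_\nu u_{\vec c}$ on $\p_i \hat M$ is monotone nondecreasing in $c_i$ and nonincreasing in $c_j$ for $j\ne i$; this monotonicity is what makes either the coordinate-by-coordinate sliding or a Poincar\'e--Miranda/degree argument for $\vec c\mapsto\bigl(\min_{\p_i \hat M}(\pm\,\p_\nu u_{\vec c})\bigr)_{i=1}^m$ close up, and since the homogeneous Neumann condition on $F$ prevents the maximum from being attained there, the extremal sign behaviour on the faces $c_i=\pm R$ is uniform in the remaining constants. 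One small correction: the components $\p_i \hat M$ are the closed MOTS/MITS surfaces enclosed by the large cube, hence disjoint from $T\cup B\cup F$ and from the edges $\mE$; there is no seam $\p_i \hat M\cap (T\cup B)$ to worry about, and the $C^{1}$ (indeed $C^{2,\alpha}$) control of $u_{\vec c}$ on $\p_i \hat M$, together with its continuity in $\vec c$, follows from Schauder estimates up to these smooth interior boundary components, far from the cubical corners where the regularity is reduced.
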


Then, as in the proof of Theorem \ref{cuberigidity}, we get, 
\be\label{comparisonwithboundary}
\begin{split}
0\leq 
&\ \int_{\hat \O} \frac12  \frac{|\hat {\o \Na} \hat{ \o \Na} u_{\vec{c}}|^2}{| \hat \nabla u_{\vec{c}} |} + (\hat \mu-|\hat J|_{\hat g})|\hat \Na u_{\vec{c}}|  \,dV +\ \int_{T\cup B\cup F}( \hat H -|\hat \pi(\cdot,\nu)|) |\hat \Na u_{\vec{c}}|\,d \sigma\\
\leq& \int_{0}^{1} \int_{\S_t} \frac{1}{2}R_{\S_t} dA\,  d t+ \ \int_{0}^1 \int_{\p\S_t}\kappa \, d\tau \,dt + \sum_{i=1}^m \int_{\p_i \hat M}  \hat H |\p_{\nu} u_{\vec{c}}|-tr_{\p_i \hat M}\hat k(\p_{\nu} u_{\vec{c}}) \, d\sigma\\
\leq & \sum_{i=1}^m \int_{\p_i \hat M}  \hat H |\p_{\nu} u_{\vec{c}}|-tr_{\p_i \hat M}\hat k(\p_{\nu} u_{\vec{c}}) \, d\sigma\\
= & 0. 
\end{split} 
\ee 
The last equality follows from Lemma \ref{ND} that we can choose $\vec{c}$ such that $u_{\vec{c}}$ on  
MOTS and MITS components of $\p \hat M$, we would have $\p_{\nu} u_{\vec{c}} \leq 0$ and $\p_{\nu} u_{\vec{c}} \geq 0$ respectively. Therefore, $\hat \mu-|\hat J|_{\hat g}=0$ in $\hat \O$, a contradiction arises.

\section{Charged Riemannian cubes}\label{Charged Riemannian cubes}
In this section, we study dihedral rigidity for charged Riemannian cubes which does not rely on its foliation by level sets but the properties of their augmented electric fields.

\begin{defn}
A Riemannian manifold $(M^3,g)$ augmented by a smooth divergence free electric field $\mE$ is called a charged initial data set. 
\end{defn}

\begin{defn}\label{chargedDEC}
A charged initial data set $(M,g,\mE)$ is said to satisfy the charged dominant energy condition if 
$$R\geq 2|\mE|^2.$$ 
\end{defn}

\begin{defn}\label{chargedBDEC}
A charged initial data set $(M,g,\mE)$ is said to satisfy the charged boundary dominant energy condition if on $\p M$, 
$$H\geq 2\la \mE,\nu \ra,$$ 
where $\nu$ is the unit outward normal. 
\end{defn} 

\begin{defn}(\cite{BHKKZ} Section 3)
The charged Hessian tensor is given by
\be
\hat{\nabla}_{i}\hat \Na_{j} u=\nabla_i\Na_{j}u+\mathcal{E}_i u_j +\mathcal{E}_j u_i -\langle\mathcal{E},\nabla u\rangle g_{ij}. 
\ee
A function $u$ on $M$ is called charged harmonic if
\be\label{fjguru}
\Delta u-\langle\mathcal{E},\nabla u\rangle=0.
\ee
\end{defn}

Then, we can conclude the following by considering a charged harmonic function with suitable boundary conditions prescribed. 
\begin{thm}\label{chargedEuclideanprism2} (Theorem \ref{chargedEuclideanprism})
Let $(M^3,g,\mE)$ be a charged initial data set of type $P$, where $P_0$ is a rectangle, which simultaneously satisfies: 
\begin{enumerate}
\item the charged dominant energy condition, 
\item the charged boundary dominant energy condition,   
\item everywhere the dihedral angles between two faces of $M$ are less than or equal to $\pi/2$, 
\end{enumerate}
where $H$ is computed with respect to $\nu$, the unit outward normal of $M$.  Then,  $(M,g)$ is conformally equivalent to a Euclidean rectangular prism. Furthermore, $(M,g,\mE)$ can be isometrically embedded into the time slice of a Majumdar-Papapetrou spacetime. 
\end{thm}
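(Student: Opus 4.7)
\textbf{Proof proposal for Theorem \ref{chargedEuclideanprism2}.} The plan is to mirror the strategy behind Theorem \ref{cuberigidity}, replacing the spacetime harmonic function $u$ with a charged harmonic function and the spacetime Hessian with the charged Hessian $\hat\nabla\hat\nabla u$. First I would establish the existence of a solution $u\in C^{0,\alpha}(M)\cap C^{1,\alpha}_{loc}(M\setminus(\bar T\cup\bar B))\cap C^{2,\alpha}_{loc}(M\setminus\bar\mE)\cap W^{3,p}_{loc}(\mathring M)$ to the mixed boundary value problem
\begin{equation*}
\Delta u -\langle \mathcal{E},\nabla u\rangle =0\ \text{in }\mathring M,\quad u=0\text{ on }B,\ u=1\text{ on }T,\quad \partial_\nu u=0\text{ on }F,
\end{equation*}
by the same regularization and implicit function theorem argument that proves Lemma \ref{PDEcube2}; the divergence-free hypothesis on $\mathcal{E}$ makes the operator in divergence form after a gauge change, so no new analytic difficulty arises.

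Next I would derive the charged analogue of Lemma \ref{cubic integration formula}. The Bochner formula for a charged harmonic function (as in \cite{BHKKZ} Section 3) yields
\begin{equation*}
\Delta|\nabla u|\ge \frac{|\hat\nabla\hat\nabla u|^2}{|\nabla u|}-|\nabla u|\bigl(R-2|\mathcal{E}|^2\bigr)/\text{(terms of Ric/Rm)}\ \text{modulo the level set terms},
\end{equation*}
and combining this with the coarea formula and Gauss--Bonnet on each level set $\Sigma_t$ (exactly as in Section \ref{Integral Formulae}, and using Proposition \ref{BoundaryTerms} to convert $\partial_\nu|\nabla u|$ into $H|\nabla u|$ and $\langle\mathcal{E},\nu\rangle|\nabla u|$ on $T\cup B$ and into $-|\nabla u|\Pi(N,N)$ on $F$) gives the inequality
\begin{equation*}
\int_M \frac{|\hat\nabla\hat\nabla u|^2}{2|\nabla u|}+\frac{1}{2}(R-2|\mathcal{E}|^2)|\nabla u|\,dV+\int_{\partial M}(H-2\langle\mathcal{E},\nu\rangle)|\nabla u|\,d\sigma \le \int_0^1\!\Big(2\pi\chi(\Sigma_t)-\sum_{j=1}^{4}(\pi-\alpha_j)\Big)\,dt.
\end{equation*}
Since each $\Sigma_t$ is a topological disk and the four dihedral angles are $\le\pi/2$, the right side is $\le 0$; the charged DEC and charged BDEC force the left side to be $\ge 0$, so equality holds throughout.

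From equality I would extract the rigidity package: $\hat\nabla\hat\nabla u\equiv 0$, $R=2|\mathcal{E}|^2$ in $M$, $H=2\langle\mathcal{E},\nu\rangle$ on $\partial M$, all dihedral angles of every $\Sigma_t$ are $\pi/2$, $R_{\Sigma_t}=0$, $\kappa_{\partial\Sigma_t}=0$, and $|\nabla u|$ is bounded below (by the Kato/ODE argument in Lemma \ref{PDEimplication}), so $M$ is smoothly foliated by flat rectangles. As in the proof of Theorem \ref{cuberigidity}, running the same argument after swapping the roles of the six faces yields that every face of $\partial M$ is totally geodesic (in fact $\Pi=0$ and $\partial M$ is isometric to the boundary of a Euclidean rectangular prism), and that three independent charged harmonic functions $(u^1,u^2,u^3)$ provide a global coordinate chart on $M$. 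The vanishing of the charged Hessian, written in these coordinates, algebraically forces $g=U^2g_{Euc}$ for a positive function $U$, and inverting the charged Hessian identity expresses $\mathcal{E}=-d\log U$; the constraint $R=2|\mathcal{E}|^2$ then reduces to $\Delta_{Euc} U=0$, which is precisely the Majumdar--Papapetrou condition. The isometric embedding into a Majumdar--Papapetrou spacetime follows by gluing to the $\mathbb{R}^3\setminus\tilde P$ exterior of a MP solution with matching boundary data and invoking the rigidity case of a charged positive mass theorem with corners (analogous to the use of \cite{T} and \cite{Shibuya} in Theorem \ref{cuberigidity}).

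The main obstacle I anticipate is the last paragraph: extracting the conformal factor $U$ and verifying that $\mathcal{E}=-d\log U$ with $\Delta_{Euc} U=0$ from the algebraic vanishing of $\hat\nabla\hat\nabla u$ alone. The key point, and where the global topology of the cube enters, is that the three independent charged harmonic coordinates produce three compatible gradient-type equations whose integrability condition is precisely that $\mathcal{E}$ is a gradient; showing this compatibility, and then that the MP exterior can be isometrically glued along $\partial M$ without curvature distributions outside the allowed Lipschitz class, will require care analogous to (but more delicate than) the $k=0$, $\mathcal{E}=0$ gluing used in Theorem \ref{cuberigidity}.
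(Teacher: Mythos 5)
Your first half (existence of the charged harmonic function with mixed boundary data, the charged integral inequality via \cite{BHKKZ} (8.7), Gauss--Bonnet on the level sets, and the equality conclusions $\hat\Na\hat\Na u=0$, $R=2|\mE|^2$, $H=2\la\mE,\nu\ra$, dihedral angles $\equiv\pi/2$) matches the paper. But the ``rigidity package'' you then transplant from Theorem \ref{cuberigidity} is genuinely false in the charged setting: equality does \emph{not} force $R_{\S_t}=0$, $\kappa_{\p\S_t}=0$, totally geodesic faces, or $\p M$ isometric to the boundary of a flat rectangular prism. The model case itself refutes this: take a coordinate box in an actual Majumdar--Papapetrou slice, $g=U^2g_{Euc}$ with $U>0$ Euclidean-harmonic and non-constant, $\mE=\nabla\log U$. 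This satisfies all three hypotheses with equality everywhere, yet its faces are umbilic with $H=2\la\mE,\nu\ra\neq 0$ (not minimal, let alone totally geodesic) and its level sets carry the non-flat metric $U^2\delta_2$. So the face-swapping argument that gave $\Pi=k|_{T(\p M)}=0$ in Theorem \ref{cuberigidity} does not carry over, and any proof that concluded $\p M$ is a flat box boundary would contradict the conformal (not isometric) rigidity the theorem actually asserts. Moreover, the step you yourself flag as the main obstacle --- that vanishing of the charged Hessians of three charged harmonic coordinates ``algebraically forces'' $g=U^2g_{Euc}$ with $\mE=-d\log U$ --- is exactly the missing content, and it is not supplied.

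The paper closes this gap by a different mechanism, which avoids level-set rigidity and any gluing altogether. From $\hat\Na\hat\Na u=0$, $R=2|\mE|^2$ and $H=2\la\mE,\nu\ra$ it invokes the equality analysis (8.19)--(8.22) of \cite{BHKKZ} to obtain the pointwise identity $R_{ij}-|\mE|^2g_{ij}+\mE_i\mE_j+\nabla_i\mE_j=0$. Symmetry of this identity forces $\nabla\mE$ to be symmetric, and since $H^1(M)=0$ and $\mE$ is divergence free, $\mE=\Na h$ for a $g$-harmonic function $h$. The conformal change $\ti g=e^{-2h}g$ then has $\ti R_{ij}=0$ and $\ti H=0$, while the $\pi/2$ dihedral angles are conformally invariant, so the already-proved uncharged rigidity (Corollary \ref{Euclideanprism}) applies to $(M,\ti g)$ and gives $\ti g=g_{Euc}$ on a rectangular prism; hence $g=e^{2h}g_{Euc}$ with $e^{h}$ Euclidean-harmonic and $\mE=\Na h$, which \emph{is} Majumdar--Papapetrou time-slice data --- no exterior gluing and no ``charged positive mass theorem with corners'' (which you would otherwise have to prove) is needed. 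If you want to salvage your route, you must replace the false flat-box claims by a proof of the conformal structure, and the paper's Ricci-identity-plus-conformal-change argument is precisely such a proof.
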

 
\begin{proof}
First, by Appendix \ref{Existence of spacetime harmonic functions 2}, we can conclude the following lemma. 
\begin{lma}\label{PDEcube3charged}
Given a charged initial data set $(M^3,g,\mE)$ of type $P$, where all dihedral angles are everywhere smaller than $\pi$,  then there exists a non-negative charged harmonic function $u\in C^{0,\alpha}(M) \cap C^{1,\alpha}_{loc}(M\setminus (\bar T \cup \bar B)) \cap C^{2,\alpha}_{loc}(M\setminus \bar E)\cap C^{3,\alpha}_{loc}(\mathring M)$ such that 
\begin{enumerate}
\item $\Delta u-\langle\mathcal{E},\nabla u\rangle=0$ in $\mathring M$,
\item $u=0$ on $B$ and $u=1$ on $T$,
\item $\p_{\nu} u =0$ on $F$,
\end{enumerate}
where $\nu$ denotes the outward unit normal of $\p M$; $T, B, F$ and $E$ denote the top, the bottom, the side faces and the edges of $M$ respectively. 
\end{lma} 

As in \eqref{MM}, using the fact that $u$ is charged harmonic, we get that on $\S:=\p M$, 
\be\label{charge}
\begin{split}
 \p_\nu | \nabla u  |= & \ -\frac{\Pi (\Na_\S \eta, \Na_\S \eta)}{|\Na u|}    +  \frac{ (\Na_\S \eta) (\nu (u))}{|\Na u|} \\
 & +\frac{\nu(u)}{|\Na u|}\la \mE,\Na u \ra- H \frac{|\nu (u)|^2}{|\Na u|} - \frac{\nu(u)}{|\Na u|}\Delta_{_\S} \eta ,
\end{split}
\ee
where $\eta=u|_{\S}$. 
Hence,  if $u=constant$ on $\Sigma$, 
\be\label{charge2}
\begin{split}
 \p_\nu | \nabla u  |=\frac{\nu(u)}{|\Na u|} \la \mE,\Na u \ra- H |\Na u|. 
\end{split}
\ee
And if $\p_{\nu}u=0$ on $\S$, 
\be\label{charge3}
\begin{split}
 \p_\nu | \nabla u  |=& -|\Na u| \Pi (\frac{\Na u}{|\Na u|}, \frac{\Na u}{|\Na u|}).\\
 \end{split}
\ee

From the proof of Lemma \ref{cubic integration formula}, by the following inequality (\cite{BHKKZ} equation (8.7)),   
\be \label{======}
\begin{split}
&\frac{1}{2}\int_{M}\left(\frac{|\hat\nabla \hat \Na u|^2}{|\nabla u|}+(R-2|\mathcal{E}|^2-R_{\S_t})|\nabla u|\right)dV\\
\leq &\int_{\p M}\left(\partial_\nu |\nabla u|-\Delta u\frac{\nu (u)}{|\nabla u|}+2|\nabla u|\langle \mathcal{E},\nu\rangle\right)dA, 
\end{split}
\ee
where $\S_t=\{u^{-1}(t)\}$, we obtain 
\be\label{CUBEcharge2} 
\begin{split}
&\ \int_{M} \frac12  \frac{|\hat \Na \hat \Na u|^2}{| \nabla u |} + (R-2|\mE|^2)|\Na u|  \, d V\\
&+ \ \int_{T\cup B} (H -2\la \mE,\nu \ra) |\Na u| +(\Lp u -\la \mE,\Na u \ra) \frac{\nu(u)}{|\Na u|} \,d \sigma + \ \int_{F} (H -2\la \mE,\nu \ra) |\Na u| \,d \sigma\\
\leq& \int_{0}^{1} \int_{\S_t} \frac{1}{2}R_{\S_t} dA\,  d t+ \ \int_{0}^1 \int_{\p\S_t}\kappa \, d\tau \,dt. 
\end{split} 
\ee

Let $\{E_j\}_{j=1}^4$ denote the vertical edges of $F$ and $\alpha_j$ denotes the dihedral angle between $F$ on $E_j$. By Gauss-Bonnet Theorem, 
\be
\begin{split}
&\ \int_{M} \frac12  \frac{|\hat \Na \hat \Na u|^2}{| \nabla u |} + (R-2|\mE|^2)|\Na u|  \, d V+ \ \int_{\p M} (H -2\la \mE,\nu \ra) |\Na u| \,d \sigma\\
\leq & \int_{0}^{1} \left(  2\pi\chi(\S_t) - \sum_{j=1}^4 (\pi-\alpha_j) \right)\,d t.
\end{split} 
\ee
Then, since $R\geq2|\mE|^2$ and $H\geq 2\la \mE,\nu \ra$, we have 
\be
\begin{split}
\sum_{j=1}^4 \theta_j \geq 2\pi,
\end{split}
\ee where $\theta_j:=\sup_{E_j} \alpha_j$. In particular, the dihedral angles of $M$ cannot be everywhere less than $\pi/2$.  As in the proof of Theorem \ref{cuberigidity}, we can by symmetry choose other orientations to solve for other charged harmonic functions.  From the proof of Lemma \ref{PDEimplication}, the dihedral angles of $P$ are indeed everywhere $\pi/2$.  Meanwhile, Lemma \ref{solvabliity} tells us that $u\in C^{2,\alpha}(M)$. Furthermore, $H=2\la \mE,\nu \ra$ on $\p M$ while $R=2|\mE|^2$ and $\hat \Na \hat \Na u=0$ in $M$. Then, we can tell that the following (\cite{BHKKZ} equation (8.19)) holds by (8.20)-(8.22) in \cite{BHKKZ}, 
\begin{equation}\label{riccie}
R_{ij}-|\mathcal{E}|^2 g_{ij}+\mathcal{E}_i \mathcal{E}_j
+\nabla_i \mathcal{E}_j=0.
\end{equation}

Follow further the arguments in \cite{BHKKZ} Section 8.3, since $\Na \mE$ is symmetric and $H^1(M)=0$, there exists a harmonic function $h$ such that $\mE=\Na h$. Let $\ti g:=f^{4}g$, where $f=e^{\frac{-h}{2}}$.  By considering the curvatures under conformal changes, $u$ being charged harmonic and \eqref{riccie},  we have
\be
\begin{split}
\ti R_{ij}=0 \text{\,\,\,and\,\,\,}  \ti H=0.  
\end{split}
\ee
Note that, the dihedral angles of $(M,\ti g=f^4 g)$ are still $\pi/2$ everywhere. Therefore, by Corollary \ref{Euclideanprism}, $(M, \ti g)$ is isometric to $(\ti P, g_{Euc})$ for some Euclidean rectangular prism $\ti P$.  In particular, $(M, g=e^{2h}g_{Euc},\mE=\Na h)$ can be isometrically embedded into the time slice of a Majumdar-Papapetrou spacetime.
\end{proof}

Similar to Theorem \ref{Bridge}, we have the following for asymptotically flat charged initial data sets when we consider the positive mass theorem with charge (\cite{GHHP},\cite{BHKKZ} and references therein). 
\begin{thm}
Let $(M^3, g, \mE)$ be an asymptotically flat charged initial data set satisfying the charged dominant energy condition.  Let $\{P_k\}$ denote a sequence of Euclidean polyhedra satisfying
conditions in \cite{MP} Theorem 1.1.  Then, 
\be
\begin{split}
\lim_{k\to\infty}\left( -\ \int_{\mF(\p P_k)} H  \,d \sigma+\ \int_{\mF(\p P_k)}2\la \mE,\nu \ra \,d \sigma+\ \int_{\mE(\p P_k)} ( \alpha - \bar \alpha) \, d \mu \right) \ge 0, 
\end{split} 
\ee 
where $\mF(\p P_k)$ and $\mE(\p P_k)$ denote the faces and edges of $\p P_k$ respectively, $\nu$ is the outward unit normal with respect to $g$ on corresponding faces, $\alpha$ and $ \bar \alpha$ respectively denote the dihedral angles with respect to $g$ and $g_{Euc}$. 
\end{thm}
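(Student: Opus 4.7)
The plan parallels the proof of Theorem \ref{Bridge}, substituting the spacetime positive mass theorem with the positive mass theorem with charge (\cite{GHHP},\cite{BHKKZ}). Three ingredients will enter: (i) the Miao--Piubello polyhedral expansion of the ADM energy, (ii) an analogous polyhedral expansion of the total electric charge, and (iii) the charged positive mass inequality $E\geq |Q|$.

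First, I would apply \cite{MP} Theorem 1.1 directly to $\{P_k\}$ to obtain
\be
c(3)E = -\int_{\mF(\p P_k)} H\, d\sigma+\int_{\mE(\p P_k)}(\alpha-\bar\alpha)\, d\mu+o(1),
\ee
where $c(3)=8\pi$. This piece is identical to the argument for Theorem \ref{Bridge}.

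Next, I would exploit the divergence-free condition $\mathrm{div}\,\mE=0$ built into the definition of a charged initial data set. For any coordinate radius $r$ with $B_r(0)\subset P_k$ (which holds eventually by the growth conditions on $\{P_k\}$), the divergence theorem applied to the annular region between $|x|=r$ and $\p P_k$ gives
\be
\int_{\mF(\p P_k)}\la \mE,\nu\ra\, d\sigma=\int_{|x|=r}\la \mE,\nu\ra\, d\sigma.
\ee
Letting $r\to\infty$, the right-hand side converges to $4\pi Q$, where $Q$ is the total electric charge of the asymptotic end, so
\be
\int_{\mF(\p P_k)}2\la \mE,\nu\ra\, d\sigma=c(3)Q+o(1).
\ee
Summing the two expansions yields
\be
-\int_{\mF(\p P_k)} H\, d\sigma+\int_{\mF(\p P_k)}2\la \mE,\nu\ra\, d\sigma+\int_{\mE(\p P_k)}(\alpha-\bar\alpha)\, d\mu=c(3)(E+Q)+o(1).
\ee
The charged positive mass theorem then delivers $E\geq|Q|\geq -Q$, hence $E+Q\geq 0$, and passing to the limit gives the stated inequality.

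The main obstacle will be making the second step rigorous: one has to verify that the hypotheses on $\{P_k\}$ from \cite{MP} Theorem 1.1 ensure eventual containment of every coordinate ball (so the flux identity above applies for arbitrarily large $r$), and that the decay of $\mE$ built into the charged asymptotically flat framework suffices both for $Q$ to be well-defined and for the convergence $\int_{\p P_k}\la \mE,\nu\ra\, d\sigma\to 4\pi Q$ to be uniform in the shape of $\p P_k$. A further technical point is that the charged positive mass theorem exists in several versions; one would need to cite the formulation in \cite{GHHP} or \cite{BHKKZ} whose regularity and decay hypotheses match those of the charged asymptotically flat data sets considered here.
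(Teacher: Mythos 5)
Your proposal is correct and is essentially the argument the paper intends: the paper gives no separate proof, remarking only that the result follows ``similar to Theorem \ref{Bridge}'', i.e.\ by combining the Miao--Piubello expansion of the ADM energy with the charged positive mass theorem $E\geq |Q|$, exactly as you do. Your divergence-theorem identification of $\int_{\mF(\p P_k)}2\la \mE,\nu\ra\,d\sigma$ with $c(3)Q+o(1)$ is the natural charged analogue of the momentum term in the proof of Theorem \ref{Bridge}, so no further comparison is needed.
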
 

\appendix
\section{Existence of spacetime harmonic functions (Cubes) }\label{Existence of spacetime harmonic functions}
In this section,  we discuss the existence of solutions to the PDE in Lemma \ref{PDEcube2} when the dihedral angles are $\pi/2$ everywhere.  This illustrates the ideas of reducing a mixed boundary problem to a Dirichlet problem. 

\begin{prop}\label{PDEcube}
Given $([0,1]^3,g,k)$, where all dihedral angles are $\pi/2$,  there exists a non-negative spacetime harmonic function $u\in C^{2,\alpha}([0,1]^3)\cap W^{3,p}_{loc}((0,1)^3)$ such that 
\begin{enumerate}
\item $G_0(u):=\Lp u + K|\Na u|=0$ in $(0,1)^3$,
\item $u=0$ on $B$ and $u=1$ on $T$,
\item $\p_{\nu} u =0$ on $F$,
\end{enumerate}
where $K=tr_gk$, $T, B$ and $F$ denote the top, the bottom and the side faces of the cube respectively and $\nu$ is the outward unit normal of $\p [0,1]^3$. 
\end{prop}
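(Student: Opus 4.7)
The plan is to use the $\pi/2$ dihedral angle assumption to reduce the mixed boundary value problem to a pure Dirichlet problem on a reflected manifold, and then solve the Dirichlet problem by a continuity method after regularizing the non-differentiable term $K|\nabla u|$.

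First, reflect $([0,1]^3, g, k)$ across each of the four side faces. Because every dihedral angle equals $\pi/2$, after identifying opposite reflected copies one obtains a manifold $\tilde M = \mathbb{T}^2 \times [0,1]$ with metric $\tilde g$ and symmetric $(0,2)$-tensor $\tilde k$, both at least Lipschitz on $\tilde M$ and smooth away from the reflected side faces. Any solution of the Dirichlet problem
\[
\Delta_{\tilde g} u + \tilde K |\nabla u| = 0, \quad u|_{\mathbb{T}^2 \times \{0\}} = 0, \quad u|_{\mathbb{T}^2 \times \{1\}} = 1,
\]
which is symmetric under each reflection restricts to a solution of the original mixed BVP on $[0,1]^3$; the homogeneous Neumann condition on $F$ is automatic from this symmetry.

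Next, to handle the non-differentiability of $|\nabla u|$ at critical points, introduce for $\delta>0$ and $\tau\in[0,1]$ the family
\[
G_{\delta,\tau}(u) := \Delta_{\tilde g} u + \tau \tilde K \sqrt{|\nabla u|^2 + \delta^2},
\]
and solve $G_{\delta,\tau}(u) = 0$ on $\tilde M$ with the above Dirichlet data. For fixed $\delta$ a continuity argument in $\tau$ applies: at $\tau=0$ the ordinary Dirichlet problem for the Laplacian is solvable. Openness in $\tau$ follows from the implicit function theorem, since for $\delta > 0$ the linearization of $G_{\delta,\tau}$ is a linear uniformly elliptic operator with bounded lower order terms, and hence the linearized Dirichlet problem is invertible on a suitable Hölder space. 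Closedness in $\tau$ is the content of the a priori estimates.

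The main obstacle is the a priori gradient bound for $G_{\delta,\tau}$, uniform in both $\tau$ and $\delta$, the latter being needed to pass to the limit $\delta \to 0$. The $L^\infty$ estimate $0 \le u \le 1$ follows directly from the maximum principle. For the $C^1$ estimate, the plan is a Bernstein-type argument: apply the Bochner formula to $|\nabla u|^2$, use the equation to substitute for $\Delta u$, and exploit the sign structure of the resulting differential inequality together with suitable barriers at the two Dirichlet faces, to obtain a gradient bound depending only on $\|g\|_{C^2}$ and $\|k\|_{C^1}$. Once this is established, standard Schauder theory on $\tilde M$ provides uniform $C^{2,\alpha}$ estimates up to the Dirichlet boundary and $W^{3,p}_{loc}$ estimates in the interior. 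Passing $\delta \to 0$ by Arzelà-Ascoli yields a solution of the unregularized PDE on $\tilde M$, and by symmetrization (or uniqueness) the limit is invariant under the reflection group, hence restricts to the desired function $u$ on $[0,1]^3$ with the stated regularity.
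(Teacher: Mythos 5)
Your overall scheme is the same as the paper's: reflect evenly across the four side faces (using the constant $\pi/2$ angles) to get a Dirichlet problem on $\mathbb{T}^2\times[0,1]$, regularize $|\nabla u|$ by $\sqrt{|\nabla u|^2+\delta^2}$, solve by a continuity/implicit-function argument, and pass $\delta\to 0$ with uniform estimates. Whether the continuity parameter multiplies $K$ (your $\tau$) or sits in the Dirichlet datum (the paper's $t$, with the regularization $K\sqrt{\delta^2+|\nabla u|^2}-\delta K$ chosen so that $u\equiv 0$ solves the $t=0$ problem and constants are comparison functions) is immaterial; dropping the $-\delta K$ term only costs an $O(\delta\|K\|_{C^0})$ error in the sup bound, which is harmless. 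The substantive divergence is where you place the a priori estimate, and there the proposal has genuine gaps.

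First, the key uniform gradient bound is only announced as a Bernstein/Bochner ``plan,'' and as described it would not run on $\tilde M$: the reflected metric is merely Lipschitz across the identified side faces, its Christoffel symbols jump there and its Ricci curvature is only distributional (concentrated on the interfaces unless the faces are totally geodesic), while the regularized solutions are only $W^{2,p}$, hence $C^{1,\alpha}$, across those interfaces; so the pointwise Bochner inequality for $|\nabla u|^2$ cannot be applied classically across them, and even away from them the resulting inequality $\Delta|\nabla u|^2\ge -C(|\nabla u|^2+\delta^2)$ has the wrong sign for a direct maximum-principle bound without an auxiliary function. More importantly, no Bernstein argument is needed: since the nonlinearity $K\sqrt{|\nabla u|^2+\delta^2}$ grows only linearly in $\nabla u$, the paper gets the uniform bound simply by regarding $\Delta u=-K\sqrt{\delta^2+|\nabla u|^2}+\delta K$ as an equation with Hölder right-hand side, applying the (reflected) elliptic estimate, and absorbing the gradient term via interpolation against $\|u\|_{C^0}$, yielding $\|u\|_{C^{2,\alpha}}\le C(1+\|K\|_{C^{0,\alpha}})$ independently of $\delta$. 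Second, your closing claim that ``standard Schauder theory on $\tilde M$ provides uniform $C^{2,\alpha}$ estimates'' is not correct across the reflection interfaces, precisely because the first-order coefficients are only $L^\infty$ there; one obtains $W^{2,p}$ and $C^{1,\alpha}$ on $\tilde M$, and the $C^{2,\alpha}([0,1]^3)$ regularity asserted in the Proposition (up to the closed side faces and edges) must be recovered after restriction to the cube, using the Neumann condition $\partial_\nu u=0$ on $F$ to put the normal-derivative terms on the right-hand side as $C^{0,\alpha}$ data (the paper's step via \cite{GT} Lemma 6.18). You also need the final touches the paper records: $u\ge 0$ by the maximum principle in the limit, and $W^{3,p}_{loc}$ from Kato's inequality together with \cite{GT} Theorem 9.19. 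With the a priori estimate replaced by the interpolation argument and the boundary regularity step supplied, your outline matches the paper's proof.
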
 

\subsection{Invertibility of Linear operator}\label{InvertibilityofLaplaceoperator}
First, we are going to show the solvability of certain linear mixed boundary value problems, which will be used in Section \ref{Regularisedoperator} to prove the proposition above.  

\begin{defn}
Let $\mB:=\{w\in C^{2,\alpha}([0,1]^3) \,|\,   \p_{\nu} w=0,\exists\,  C_1, C_2 \in \R \,\,\text{s.t. }\, w=C_1 \, \text{on}\,\, T \, \text{and} \,\, w=C_2 \,\text{on}\, B\}$, which is a Banach space with $C^{2,\alpha}([0,1]^3)$ norm. 
\end{defn}
\begin{defn}
Let $\mB_0=\{w\in \mB \,|\,w=0 \, \text{on}\, \,T \, \text{and} \, \, B\}$, which is also a Banach space with $C^{2,\alpha}([0,1]^3)$ norm. 
\end{defn}

The following lemma is implied by the proof of Section 3 in \cite{CK}. And here we provide an alternative proof by reflection (cf. \cite{Li1} Appendix B, \cite{Nittka1}) which reduces the mixed boundary problem to a Dirichlet boundary problem.  This approach can further be utilised when we study the mixed boundary problem on general prisms. 
\begin{lma}\label{solvabliity}
Given $([0,1]^3,g)$, where all dihedral angles are $\pi/2$. If $X$ is a vector field of regularity $C^{0,\alpha}([0,1]^3)$, then the operator $L:\mB \to C^{0,\alpha}([0,1]^3)$ defined by 
$$L(u)=\Lp (u) + \la X, \Na u \ra$$ is invertible. 
\end{lma}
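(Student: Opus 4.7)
My approach is the reflection/unfolding trick suggested by the citations to \cite{Li1} Appendix B and \cite{Nittka1}: the $\pi/2$ dihedral angles at the side faces let one reflect the cube together with $(g, X)$ across $F$ and convert the mixed Dirichlet--Neumann problem into a pure Dirichlet problem on a closed domain. Iterating the reflections across the four side faces identifies $[0,1]^3$ with a fundamental domain of the natural $(\mathbb{Z}/2)^2$ action on the torus-slab $\tilde M := \mathbb{T}^2 \times [0,1]$, where $\mathbb{T}^2 := \mathbb{R}^2/(2\mathbb{Z})^2$. Extending $g$ equivariantly produces a metric $\tilde g$ that is $C^{2,\alpha}$ off the four reflection hypersurfaces $\tilde F$ and globally Lipschitz across them; extending $X$ equivariantly produces $\tilde X \in C^{0,\alpha}(\tilde M)$. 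The Neumann condition $\partial_\nu u = 0$ on $F$ is exactly the $C^1$ matching of the equivariant extension across $\tilde F$, so solving $Lu = f$ with the given mixed boundary value problem is equivalent to solving the pure Dirichlet problem
$$\tilde L \tilde u := \Delta_{\tilde g}\tilde u + \langle \tilde X, \nabla \tilde u\rangle_{\tilde g} = \tilde f \ \text{on}\ \tilde M, \qquad \tilde u|_{\mathbb{T}^2 \times \{0\}} = C_2, \quad \tilde u|_{\mathbb{T}^2 \times \{1\}} = C_1.$$

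To solve the Dirichlet problem on $\tilde M$, I would use the method of continuity along $\tilde L_t := \Delta_{\tilde g} + t\langle \tilde X, \nabla \cdot \rangle_{\tilde g}$, $t \in [0,1]$. At $t = 0$, unique solvability for the Laplace--Beltrami Dirichlet problem on $\tilde M$ with Lipschitz metric follows from Lax--Milgram in $H^1(\tilde M)$ together with elliptic regularity. The Fredholm alternative propagates invertibility to all $t \in [0,1]$ provided one has the uniform a priori bound
$$\|\tilde u\|_{C^{2,\alpha}} \leq C\bigl(\|\tilde L_t \tilde u\|_{C^{0,\alpha}} + \|\tilde u\|_{C^0}\bigr),$$
with the $C^0$ bound coming from the maximum principle applied after subtracting an affine extension of the boundary data. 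Uniqueness for the lifted problem forces $\tilde u$ to be $(\mathbb{Z}/2)^2$-invariant, so it descends to $u \in \mathcal{B}$ solving $Lu = f$; injectivity on the subspace of $\mathcal{B}$ with prescribed $C_1, C_2$ follows from the strong maximum principle and the Hopf boundary lemma (the latter rules out the extremum lying on the Neumann face $F$).

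The main obstacle is the uniform $C^{2,\alpha}$ estimate, since $\tilde g$ is only Lipschitz across $\tilde F$ and the enlarged domain has no boundary on which to run classical Schauder globally. I would not argue on $\tilde M$ directly; instead I would pull the estimate back to $[0,1]^3$, where $g$ is genuinely $C^{2,\alpha}$, and assemble local Schauder estimates: interior Schauder in $(0,1)^3$; Dirichlet Schauder on the smooth interior of $T$ and $B$; oblique-derivative (Neumann) Schauder on the smooth interior of $F$; at each horizontal edge $\bar T \cap \bar F$ or $\bar B \cap \bar F$, a single local reflection across $F$ converts the problem to a Dirichlet problem with $C^{2,\alpha}$ metric on a half-space neighborhood of the edge (here the $\pi/2$ angle is what makes $T$ or $B$ continue as a smooth hypersurface after reflection); at each vertical edge, a local reflection across one incident side face yields a smooth Neumann corner; and at each vertex, two successive local reflections reduce the estimate to interior Schauder at an interior point. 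Throughout, the $\pi/2$ hypothesis is used crucially to guarantee that each local reflection keeps the configuration inside a smooth-boundary setting where classical Schauder theory for Dirichlet, Neumann, or oblique-derivative problems applies.
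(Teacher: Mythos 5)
Your global strategy --- unfold the cube by even reflections across the side faces into the torus-slab $\mathbb{T}^2\times[0,1]$, solve the resulting Dirichlet problem, and descend back to the cube by uniqueness/symmetry --- is exactly the paper's strategy. The genuine gap is in the regularity step. Under an even reflection the metric written in Fermi coordinates with respect to a side face extends only as a \emph{Lipschitz} metric (it is $C^1$ across $F$ only when the second fundamental form of $F$ vanishes), and, more importantly, the first-order coefficients of the reflected operator --- the normal components of $X$ and of the contracted Christoffel symbols --- extend as \emph{odd} functions and are therefore genuinely discontinuous across the reflection interfaces unless they happen to vanish on $F$. So your claims that the equivariant extension satisfies $\tilde X\in C^{0,\alpha}(\tilde M)$ and that a local reflection near a horizontal edge produces ``a Dirichlet problem with $C^{2,\alpha}$ metric'' are both false in general. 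Consequently classical Schauder theory (which needs H\"older first-order coefficients) cannot be applied directly in any of the reflected neighborhoods, the operator $\tilde L_t$ does not even map $C^{2,\alpha}(\tilde M)$ into $C^{0,\alpha}(\tilde M)$, and the H\"older-space method of continuity with your claimed uniform estimate is not correctly set up. This coefficient-regularity issue is precisely the technical heart of the lemma, and your proposal does not supply a way around it.

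The paper's resolution, which you would need or must replace, is to work first in $W^{2,p}$: strong-solution theory for the Dirichlet problem on $\mathbb{T}^2\times[0,1]$ (\cite{GT} Theorems 9.15, 9.13 and Lemma 9.16) requires only continuous leading coefficients and bounded drift, and yields a unique $v\in W^{2,p}$, $p>3$, hence $v\in C^{1,\alpha}$; uniqueness then gives the reflection symmetry, so $\partial_\nu v=0$ on $F$ and $v$ restricts to a strong solution of the mixed problem on the cube. The upgrade to $C^{2,\alpha}$ up to the side faces and edges is a bootstrap: since $\partial_\nu v$ is $C^{0,\alpha}$ and vanishes on $F$, the only terms with discontinuous coefficients, $\tilde\Gamma^{\nu}\partial_\nu v$ and $\tilde X^{\nu}\partial_\nu v$, are themselves $C^{0,\alpha}$ and are moved to the right-hand side, leaving an equation with Lipschitz leading coefficients, H\"older lower-order coefficients and H\"older data, to which \cite{GT} Lemma 6.18 applies. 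Without this step (or some substitute, e.g.\ a genuine corner estimate for the mixed Dirichlet--Neumann problem), your assembly of local Schauder estimates at the edges and vertices does not go through. A minor point in your favour: your remark that injectivity should be understood on the subspace with prescribed boundary constants is the right reading --- constants lie in the kernel of $L$ on all of $\mB$, and what the paper actually uses is invertibility after the reduction to $\mB_0$.
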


\begin{proof}
 Let $\phi \in \mB$, with corresponding Dirichlet data on top and bottom. The question can then be reduced to the invertibility of $\Lp:\mB_0 \to C^{0,\alpha}([0,1]^3)$. 
Consider
\begin{enumerate}
\item $L(u)=\Lp (u) + \la X, \Na u \ra=f$ in $(0,1)^3$, where $f\in C^{0,\alpha}([0,1]^3)$,
\item $u=0$ on $B$ and $u=0$ on $T$,
\item $\p_{\nu} u =0$ on $F$,
\end{enumerate}
First,  say $B$ and $T$ are identified with $\{x^3=0 \,|\,(x^1,x^2)\in [0,1]^2 \}$ and $\{x^3=1 \,|\, (x^1,x^2)\in [0,1]^2 \}$.  We can make an even isometric reflection along the one of the side faces $F$. Then,  make another even reflection along one of the longer faces.  Without loss of generality,  the quadruple cube is obtained by reflecting along $\{x^1=1\}$, then  $\{x^2=1\}$, identified by $[0,2]^2\times[0,1]$ and denoted by $Q$. 

Correspondingly,  under the coordinate charts (general Fermi coordinate) introduced in \cite{Li1} Lemma 2.2, the metric components $g^{ij}$, the Christoffel symbols $\Gamma^k_{ij}$,  $X^i$ and $f$ are evenly reflected twice as in \cite{Li1} Appendix B. They then would be denoted by $\tilde g$,  $\tilde \Gamma$, $\tilde X$ and $\tilde f$ respectively.  Note that, since the dihedral angle is everywhere $\pi/2$, on the edges and vertices where doubling takes place,  $\tilde g$ is still a well-defined Lipschitz metric on $Q$.  
Identifying and gluing the faces of $Q$ lying on $\{x^1=0\}$ and $\{x^1=2\}$, then the faces lying on $\{x^2=0\}$ and $\{x^2=2\}$, we have obtained $T^2\times[0,1]=S^1\times S^1\times[0,1]$.  We can see that the component functions of $\tilde g \in C^{0,1}(T^2\times[0,1])$ while $\tilde f \in C^{0,\alpha}(T^2\times[0,1])$ and $\tilde\Gamma, \tilde X \in L^\infty (T^2\times[0,1])$.  

\

Then we consider the following PDE,  
\begin{enumerate}
\item $\tilde \Lp u + \tilde g( \tilde X, \tilde \Na u )=\tilde f$ in $T^2\times(0,1)$, 
\item $u=0$ on $T^2\times\{0\}$ and $u=0$ on $T^2\times\{1\}$. 
\end{enumerate} 

By standard elliptic theory (\cite{GT} Theorem 9.15, Theorem 9.13 and Lemma 9.16), there exists a unique strong solution $v\in W^{2,p}(T^2\times[0,1])$, $p>3$, hence $C^{1,\alpha}(T^2\times[0,1])$. In order to show that this $v$ when restricted to one of the cubes solves the mixed boundary problem.  It suffices to show that $v$ is actually periodically evenly reflected.  Back to $Q$, define a new functions $\hat{v}$ and $\hat{f}$ by reflection as follows
\be
\begin{split}
\hat v(x^1,x^2,x^3)&=v(2-x^1, x^2, x^3), \\
\hat f(x^1,x^2,x^3)&=\tilde f(2-x^1, x^2, x^3),
\end{split}
\ee
note that $\hat f =\tilde f$. 

Now,  we consider the following PDE,  
\begin{enumerate}
\item$ \tilde \Lp u + \tilde g( \tilde X, \tilde \Na u) =\hat f$ in $T^2\times(0,1)$,  
\item $u=0$ on $T^2\times\{0\}$ and $u=0$ on $T^2\times\{1\}$.
\end{enumerate} 
By the symmetry of $\tilde f$, $v\in W^{2,p}(T^2\times[0,1])$ is the unique solution.  On the other hand, by the symmetry of coefficients, obviously $\hat v$ is also a solution.  Hence,  $v=\hat v$.  Hence $v$ is even along the plane $\{x^1=1\}$.  Similarly, we can conclude that $v$ is symmetric along the other side faces. Therefore, we can conclude that $v|_{[0,1]^3}\in W^{2,p}([0,1]^3)$ is a strong solution to the mixed boundary problem.  Moreover, since $\p_{\nu} v=0$ on $F$, $v$ is actually $C^2$ in $T^2\times(0,1)$.  For its regularity up to the boundary, in particular across the closed side faces and edges,  let $\Omega$ be a neighbourhood in $T^2\times[0,1]$ near the boundary.  For $\nu$ on $F$ can be extended as a Fermi coordinate system, consider
\begin{enumerate}
\item$ \tilde g^{ij} v_{ij} + \tilde \Gamma^{a}\p_{a}v + \tilde X^{a}\p_{a} v = \tilde f - \tilde \Gamma^{\nu}\p_{\nu} v - \tilde X^{\nu} \p_{\nu} v $ in $\Omega$,  
\item $v=0$ on $T^2\times\{0\}$, 
\end{enumerate}
where $i,j\in \{1,2,3\}$ while $a,b \in\{1,2\}$ stands for the component perpendicular to $\nu$.  Note that $\p_{\nu} v \in C^{1}(T^2\times(0,1))\cap C^{0,\alpha}(T^2\times[0,1])$ and vanishes along the side faces, hence the right hand side is $C^{0,\alpha}(\bar \Omega)$.  Similarly, we can deal with the edges. Therefore, by \cite{GT} Lemma 6.18, we know that $v\in  C^{2,\alpha}(T^2\times[0,1])$. As a result, $L:\mB \to C^{0,\alpha}([0,1]^3)$ is invertible. 
\end{proof}

\subsection{Regularised operator}\label{Regularisedoperator}
After showing invertibility of linear operators in Lemma \ref{solvabliity}, we are going to show that the mixed boundary problem in Proposition \ref{PDEcube} is solvable by implicit function theorem. Since the operator there is not linearisable, we first have to consider the following regularised operator.

Let $\d\in(0,1)$,  let $G_{\d}$ be a regularised operator defined by $G_{\d}(u):=\Lp u + K\sqrt{ \delta^2+|\Na u|^2}-\delta K$.  We are going to consider the following regularised PDE.  

\begin{enumerate}
\item $G_{\d}(u)=0$ in $(0,1)^3$,
\item $u=0$ on $B$ and $u=1$ on $T$,
\item $\p_{\nu} u =0$ on $F$,
\end{enumerate}
where $K=tr_gk$ and $T, B, F$ denotes the top, the bottom and the side faces of a cube respectively. 

\subsection{Aprori estimates}
Let $u\in \mB$ be a solution to the PDE above.  As in Section \ref{InvertibilityofLaplaceoperator},  by reflection and the fact that $$\Lp u=- K\sqrt{ \delta^2+|\Na u|^2}+\delta K,$$ together with  interpolation inequality (\cite{GT} Lemma 6.35), we get the following estimate 
\be
\begin{split}
||u||_{C^{2,\alpha}([0,1]^3)}\leq C(||u||_{C^0([0,1]^3)}+\delta||K||_{C^{0,\alpha}([0,1]^3)}).
\end{split} 
\ee
where $C$ depends on metric $g$. 
Moreover,  by \cite{GT} Theorem 9.1, we have $||u||_{C^0([0,1]^3)} \leq C(1+||K||_{C^{0}([0,1]^3)})$.  Altogether, we have
\be\label{regularisedschauderestimate}
\begin{split}
||u||_{C^{2,\alpha}([0,1]^3)}\leq C(1+||K||_{C^{0,\alpha}([0,1]^3)}), 
\end{split} 
\ee
which is independent of  $\delta$.  

\subsection{Uniqueness of solution}
\begin{prop}
The solution to the regularised PDE is unique. 
\end{prop}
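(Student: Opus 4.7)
The plan is the standard linearization-and-maximum-principle argument that is tailored to the mixed boundary condition. Suppose $u_1,u_2\in\mathcal{B}$ both solve the regularised problem. Set $w:=u_1-u_2$, so that $w=0$ on $T\cup B$ and $\partial_\nu w=0$ on $F$. Subtracting $G_\delta(u_2)=0$ from $G_\delta(u_1)=0$ and using the algebraic identity
\begin{equation*}
\sqrt{\delta^2+|\nabla u_1|^2}-\sqrt{\delta^2+|\nabla u_2|^2}
=\frac{\langle \nabla(u_1+u_2),\nabla w\rangle}{\sqrt{\delta^2+|\nabla u_1|^2}+\sqrt{\delta^2+|\nabla u_2|^2}},
\end{equation*}
one obtains
\begin{equation*}
\Delta w+\langle X_\delta,\nabla w\rangle=0\qquad\text{in }(0,1)^3,
\end{equation*}
where
\begin{equation*}
X_\delta:=\frac{K\,\nabla(u_1+u_2)}{\sqrt{\delta^2+|\nabla u_1|^2}+\sqrt{\delta^2+|\nabla u_2|^2}}.
\end{equation*}
The denominator is bounded below by $2\delta>0$, and by the a priori bound \eqref{regularisedschauderestimate} the numerator is $C^{0,\alpha}$, so $X_\delta$ is a bounded (indeed $C^{0,\alpha}$) vector field. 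Crucially, this linear equation for $w$ has no zeroth-order term.

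The second step is to apply the maximum principle to $w$. Since the coefficient of $w$ vanishes, the strong maximum principle applies without sign restrictions on $X_\delta$. Suppose $w$ attains its maximum on $\overline{[0,1]^3}$ at a point $p$. If $p$ lies on $T$ or $B$, then $\max w=0$, and by the same argument applied to $-w$, $\min w=0$, so $w\equiv 0$. If $p$ is an interior point, the strong maximum principle forces $w$ to be constant, hence identically zero by the Dirichlet data. If $p\in F$, the Hopf boundary-point lemma yields $\partial_\nu w(p)>0$ unless $w$ is constant, contradicting the homogeneous Neumann condition on $F$; again $w\equiv 0$.

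The mildly delicate point is the Hopf lemma at a face point whose closure meets the edges or vertices of the cube, where $F$ is only piecewise smooth. This is handled exactly as in the existence proof (Lemma \ref{solvabliity}): perform even reflections of the metric, of $X_\delta$, and of $w$ across the four side faces to produce a solution $\tilde w$ of a linear equation on $T^2\times[0,1]$ with zero Dirichlet data on $T^2\times\{0,1\}$. The reflected equation still has no zeroth-order term, the reflected coefficients are bounded, and $T^2\times[0,1]$ is a smooth manifold with boundary, so the strong maximum principle on $T^2\times[0,1]$ immediately gives $\tilde w\equiv 0$, hence $w\equiv 0$. I expect the only mildly technical step to be checking that the reflection procedure preserves the structural properties of $X_\delta$ enough to invoke the maximum principle on $T^2\times[0,1]$; this is routine given the earlier doubling argument.
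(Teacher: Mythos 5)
Your proposal is correct and follows essentially the same route as the paper: subtract the two solutions, use the algebraic identity for the difference of the regularised gradient terms to obtain a drift-Laplace equation with no zeroth-order term for $w=u_1-u_2$ with zero Dirichlet data on $T\cup B$ and homogeneous Neumann data on $F$, and conclude $w\equiv 0$ by the maximum principle. The paper states this in two lines; your extra discussion of the Hopf lemma on $F$ and the reflection to $T^2\times[0,1]$ near edges is just a more detailed justification of the same final maximum-principle step.
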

\begin{proof}
If $u$ and $v$ are solutions to the regularised PDE, then we have 
\begin{enumerate}
\item $ \Lp (u-v) + K\frac{\Na(u+v)}{\sqrt{ \delta^2+|\Na u|^2}+\sqrt{ \delta^2+|\Na v|^2}}\cdot \Na(u-v)=0$ in $(0,1)^3$,
\item $u-v=0$ on $B$ and $u-v=0$ on $T$,
\item $\p_{\nu} (u-v) =0$ on $F$,
\end{enumerate}
Then by maximum principle, $u=v$. 
\end{proof}

\subsection{Linearisation of the regularised operator}\label{Linearisation of the regularised operator}
Let $\phi\in \mB$ with $\phi=0$ on $B$ and $\phi=1$ on $T$. For a fixed $\delta\in(0,1)$, 
we consider a mapping $T_{\delta}:\mB_0 \times [0,1] \to C^{0,\alpha}([0,1]^3)$ defined by 
$$T_{\delta}[u,t]=G_{\delta}(u+t\phi).$$ And for each $t$, let $T^{(1)}_{\delta}|_{[u,t]}:\mB_0 \to C^{0,\alpha}([0,1]^3)$ denote its linearisation in the parameter of $\mB_0$.

Let $A=\{t\in [0,1] \,|\,  \exists\, w\in \mB_0 \, \,\text{such that}\,  T_{\d}[w,t]=0.\}$. Equivalently, if $t\in A$, there exists $u\in\mB$ solving the following mixed boundary problem, 
\begin{enumerate}
\item $G_{\d}(u)=0$ in $(0,1)^3$,
\item $u=0$ on $B$ and $u=t$ on $T$,
\item $\p_{\nu} u =0$ on $F$. 
\end{enumerate}

\

$A$ is non-empty obviously since $T_{\delta}[0,0]=0$.  We first show that $A$ is open as follows.  Let $\bar t \in A$, i.e. there exists $\bar u\in \mB_0$ such that 
$$T_{\delta}[\bar u,\bar t]=0.$$
Consider the linearisation,  
$$T^{(1)}_{\delta}|_{[\bar u,\bar t]}(v)=\Lp v+K\frac{\Na(\bar u+ \bar t \phi)}{\sqrt{\d^2+ |\Na (\bar u + \bar t \phi)|^2}}\cdot\Na v,$$ which is invertible by Lemma \ref{solvabliity}. By implicit function theorem (\cite{GT} Theorem 17.6),  we know that there exists a neighbourhood $\mathcal{N}$ of $\bar t$ in $[0,1]$ such that for all $t\in \mathcal{N}$,  there exists $u_{\delta, t}\in \mB_0$ such that  $T_{\delta}[u_{\delta,  t},t]=0$.

By the estimate \eqref{regularisedschauderestimate} which is independent of $\delta\in(0,1)$ and $t\in[0,1]$, we have that $A$ is closed and hence $A=[0,1]$.  Therefore, for all $\delta\in(0,1)$, there exists $u_{\d}\in \mB$ solving the PDE,  
\begin{enumerate}
\item $G_{\d}(u_{\delta})=0$,
\item $u_{\d}=0$ on $B$ and $u_{\delta}=1$ on $T$,
\item $\p_{\nu} u_{\d} =0$ on $F$. 
\end{enumerate}

Again, by the uniform estimate \eqref{regularisedschauderestimate}, we have $u_{\d} \xrightarrow[\delta\to 0]{C^{2,\beta}([0,1^3])} u\in \mB$ for all $0<\beta<\alpha$,  satisfying 
\begin{enumerate}
\item $\Lp u + K|\Na u|=0$,
\item $u=0$ on $B$ and $u=1$ on $T$,
\item $\p_{\nu} u =0$ on $F$.
\end{enumerate}
Furthermore, by Kato's inequailty and \cite{GT} Theorem 9.19, we get $u\in W^{3,p}_{loc}((0,1)^3)$ and by maximum principle,  $u\geq 0$. 

\section{Existence of spacetime harmonic functions (Prisms)}\label{Existence of spacetime harmonic functions 2}
To prove Lemma \ref{PDEcube2},  regularisation and implicit function theorem used in Appendix \ref{Existence of spacetime harmonic functions} are still the essential tools.  Generally, for $P_0$ being a $q$-gon,  we can locally around each vertical edge form a reflection twice for regularity estimates. (For example, see \cite{Nittka1} and \cite{Nittka2} which apply a bi-Lipschitz mapping locally onto the boundary where Neumann conditions are imposed to get it straightened followed by a reflection.) Therefore, after identifying the side faces where Neumann conditions are imposed,  we can apply standard results in \cite{GT} for estimates on Dirichlet problems.  

\

However, since the angles are no longer necessarily constantly $\pi/2$ and local bi-Lipschitz mappings are applied through identification when an even reflection is carried out twice as in Appendix \ref{Existence of spacetime harmonic functions}, the coefficients could be discontinuous but still uniformly bounded across the edges and vertices. Correspondingly, we need to apply weak solution theory instead and consider different Banach spaces.  $G_{\d}(u)$ is then expressed as $$ div(\Na u)+K\sqrt{\d^2+|\Na u|^2}-\d K$$ whose structure would be preserved under bi-Lipschitz transformation. 

\

For a type $P$ initial data set $(M,g,k)$, we define the following. 
\begin{defn}
Let $\mathring M:= int\, M$, $\ti \mB:=\{w\in W^{1,2}(\mathring M) \,|\,   \p_{\nu} w=0\, \text{on}\,\, F,\exists\,  C_1, C_2 \in \R \,\,\text{s.t. }\, w=C_1 \, \text{on}\,\, T \, \text{and} \,\, w=C_2 \,\text{on}\, B\}$, which is a Banach space with $W^{1,2}(\mathring M)$ norm. 
\end{defn}

\begin{defn}
Let $\ti \mB_0=\{w\in \ti \mB \,|\,w=0 \, \text{on}\, \,T \, \text{and} \, \, B\}$, which is also a Banach space with $W^{1,2}(\mathring M)$ norm. 
\end{defn}

\begin{defn}
Let $\ti\mH_0=\{w\in W^{1,2}(\mathring M) \,|\,w=0 \, \text{on}\, \,T \, \text{and} \, \, B\}$, which is also a Banach space with $W^{1,2}(\mathring M)$ norm. 
\end{defn}

\begin{defn}
Let $\phi\in \ti \mB$ with $\phi=0$ on $B$ and $\phi=1$ on $T$. For a fixed $\delta\in(0,1)$, 
we consider a mapping $\ti T_{\delta}:\ti \mB_0 \times [0,1] \to L^2 \subset \ti \mH_0^*$ defined by 
$$\ti T_{\delta}[u,t]=G_{\delta}(u+t\phi).$$ And for each $t$, let $\ti T^{(1)}_{\delta}|_{[u,t]}:\ti\mB_0 \to L^2$ denote its linearisation in the parameter of $\ti \mB_0$.  
\end{defn}

\subsection{Invertibility of Linear operators} Prior to implicit function theorem, we have to show certain linear operators are invertible. 
\begin{lma}\label{solvabliitydivform}
Given a type $P$ initial data set $(M,g,k)$. If $X$ is a bounded vector field, then the operator $\ti L:\ti \mB \to L^2 \subset \ti \mH_0^*$ defined by 
$$\ti L(u) (v)=\int_{M} -( div (\Na u) + \la X, \Na u \ra ) (v)$$ for $v\in \ti \mH_0$, is invertible. 
\end{lma}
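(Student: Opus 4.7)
The plan is to invert $\ti L$ by applying the Fredholm alternative in Hilbert space form, reducing existence to uniqueness and then establishing uniqueness via a weak maximum principle. I associate to $\ti L$ the bilinear form
\[ a(u,v) = \int_M \Na u \cdot \Na v\, dV - \int_M \la X, \Na u \ra v\, dV, \]
which is bounded on $W^{1,2}(\mathring M) \times W^{1,2}(\mathring M)$ since $X$ is essentially bounded. Given prescribed Dirichlet values $C_1, C_2$ on $T, B$, I pick any fixed $\phi \in \ti \mB$ realizing them and set $w = u - \phi \in \ti \mH_0$; the equation $\ti L(u) = f \in L^2$ then becomes the weak problem of finding $w \in \ti \mH_0$ such that $a(w,v) = \int_M (f - \ti L \phi)\, v\, dV$ for all test functions $v \in \ti \mH_0$. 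The homogeneous Neumann condition $\p_\nu w = 0$ on $F$ is encoded naturally in this weak formulation: testing against $v \in W^{1,2}$ vanishing on $T \cup B$ but not on $F$ and integrating by parts recovers it a posteriori, so a weak solution on $\ti \mH_0$ automatically lies in $\ti \mB_0$.

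For existence I verify a Garding inequality. Since elements of $\ti \mH_0$ vanish on the set $T \cup B$ of positive $(n-1)$-dimensional measure, Poincar\'e's inequality gives $\|u\|_{L^2} \leq C_P \|\Na u\|_{L^2}$. Combining with Cauchy--Schwarz and Young's inequality on the convection term, there exist constants $\lambda, c > 0$ (depending only on $\|X\|_\infty$ and $M$) with
\[ a(u,u) + \lambda \|u\|_{L^2}^2 \geq c\, \|u\|_{W^{1,2}}^2 \quad \text{for all } u \in \ti \mH_0. \]
Lax--Milgram then makes $\ti L + \lambda I : \ti \mH_0 \to \ti \mH_0^*$ an isomorphism. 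Since $M$ is a bounded Lipschitz domain, the embedding $\ti \mH_0 \hookrightarrow L^2$ is compact by Rellich--Kondrachov, so $\ti L : \ti \mH_0 \to \ti \mH_0^*$ is Fredholm of index zero; invertibility thus reduces to showing its kernel is trivial.

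Uniqueness follows from the weak maximum principle for divergence-form operators with bounded coefficients and no zeroth-order term. If $w \in \ti \mH_0$ satisfies $a(w,v) = 0$ for all $v \in \ti \mH_0$, then for each $k > 0$ the truncation $(w-k)^+$ lies in $\ti \mH_0$ (it vanishes on $T \cup B$ where $w = 0$ and is unconstrained on the Neumann part $F$), and testing against it yields
\[ \|\Na (w-k)^+\|_{L^2}^2 \leq \|X\|_\infty \|\Na (w-k)^+\|_{L^2} \|(w-k)^+\|_{L^2}. \]
A Stampacchia iteration (Gilbarg--Trudinger Theorem 8.1) then forces $w \leq 0$; applying the same argument to $-w$ gives $w \geq 0$. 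The main obstacle is ensuring the maximum principle goes through on the Lipschitz domain $M$ with edges and corners and under mixed boundary data: one needs uniform Poincar\'e and Sobolev constants (valid for bounded Lipschitz domains), admissibility of truncations $(w-k)^\pm$ as test functions (granted because no trace constraint is imposed on $F$), and a Stampacchia-type $L^\infty$ bound that survives the mixed setting. Once these standard but technical ingredients are in place, Fredholm theory delivers invertibility of $\ti L$.
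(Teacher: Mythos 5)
Your proposal is correct and follows essentially the same route as the paper's proof: lift the Dirichlet data by a fixed $\phi$, work with the bilinear form on $\ti \mH_0$, obtain a G\r{a}rding-type coercivity for $\mL_\lambda$ and apply Lax--Milgram together with the Fredholm alternative, settle uniqueness by the weak maximum principle of Gilbarg--Trudinger Theorem 8.1 adapted so that only $\sup_{T\cup B}$ matters, and recover $\p_\nu u=0$ on $F$ a posteriori from the weak formulation. No essential difference from the paper's argument.
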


\begin{proof}
First,  an alternative weak maximum principle where we only need to consider $\sup_{T\cup B} u_+$ or $\inf_{T \cup B} u_-$ follows from the proof of \cite{GT} Theorem 8.1.  

Without loss of generality, it suffices to consider $\ti L_0:=\ti L|_{\ti \mB_0}$.  Define a bilinear functional $\mL:\ti \mH_0 \times \ti \mH_0 \to \R$ by 
$$\mL (u,v)=\int_{M} \la \Na u, \Na v \ra - \la X, \Na u \ra v$$ for $(u,v) \in \ti \mH_0 \times \ti \mH_0$. 
As in the proof of \cite{GT} Theorem 8.3, we can see there exists a sufficiently large $\l>0$ such that the bilinear functional $ \mL_{\l}$ defined by $$\mL_{\l} (u,v)=\int_{M} \la \Na u, \Na v \ra - \la X, \Na u \ra v +\l uv$$ for $(u,v) \in \ti \mH_0 \times \ti \mH_0$ is coercive.  Then follow the argument of \cite{GT} Theorem 8.3, by Lax-Milgram and Fredholm alternative we can conclude that for all $\ti f\in \ti \mH_0^*$, there exists $u\in \ti \mH_0$ such that 
$$\mL(u,v)=\ti f(v)$$ for all $v\in \ti \mH_0$. It remains to verify that $u\in \ti \mB_0$.  

In particular, for all $f\in L^2$,  there exists $u\in \ti \mH_0$ such that for all $v\in \ti \mH_0$ ,
$$\int_M(-\Lp u -\la X, \Na u \ra -f) v +\int_{F} v \p_{\nu} u  = 0.$$         
By testing with arbitrary $v\in C^1_0(M)$, we can see that $-\Lp u -\la X, \Na u \ra =f$ almost everywhere. Hence, by testing with arbitrary $v\in \ti \mH_0$, we see that $\p_{\nu} u=0$ on $F$ and thus the invertibility of $\ti L_0$ and that of $\ti L$ are concluded.  
\end{proof}

\subsection{Apriori estimates}\label{weakaprioriestimate}
This section is to show one can obtain a uniform (independent of $\d$) bound on $C^{0,\alpha}(M)$ norm and $W^{1,2}(\mathring M)$ norm for solutions to the regularised PDE, that is $u$ which satisfies the following. 
\begin{enumerate}
\item $G_{\d}(u)(v):=\int_{M}-(div(\Na u)+K\sqrt{\d^2+|\Na u|^2}-\d K)(v)=0$ for all $v\in \ti \mH_0$,
\item $u=0$ on $B$ and $u=1$ on $T$,
\item $\p_{\nu} u =0$ on $F$,
\end{enumerate}
where $K=tr_gk$ and $T, B, F$ denotes the top, the bottom and the side faces of a cube respectively. 

\begin{prop}
There exists $C>0$ such that for all $\delta \in (0,1)$, if $u_{\d}\in C^{0,\alpha}(M)\cap W^{1,2}(\mathring M)$ is a weak solution to the regularised PDE above, then
\be 
\begin{split}
||u_{\delta}||_{C^{0,\alpha}(M)} \leq C. 
\end{split}
\ee
\end{prop}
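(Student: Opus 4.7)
The plan is to establish the $\delta$-uniform $C^{0,\alpha}(M)$ bound by combining a uniform $L^\infty$ bound from the weak maximum principle with the De Giorgi-Nash-Moser Hölder regularity for divergence-form elliptic equations with bounded measurable coefficients, applied after the reflection plus bi-Lipschitz straightening device from Appendix \ref{Existence of spacetime harmonic functions 2} that trades the Neumann condition on the side faces $F$ for interior regularity on an identified cover.

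The first step is to recast $G_\delta(u_\delta)=0$ as a linear divergence-form equation with bounded coefficients. Writing
\[
-\operatorname{div}(\Na u_\delta) - b_\delta\cdot\Na u_\delta = f_\delta,
\qquad
b_\delta := K\,\frac{\Na u_\delta}{\sqrt{\delta^2 + |\Na u_\delta|^2}},
\]
one checks that $|b_\delta|\leq \|K\|_{L^\infty}$ and that the remainder
\[
f_\delta = K\,\delta\left(\frac{\delta}{\sqrt{\delta^2 + |\Na u_\delta|^2}} - 1\right)
\]
satisfies $|f_\delta|\leq\delta\|K\|_{L^\infty}$, both uniformly in $\delta\in(0,1)$. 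After reflection and bi-Lipschitz straightening across each face $F$ (justified because the Neumann condition forces no jump in the conormal derivative), the mixed boundary value problem becomes a pure Dirichlet problem on an identified domain whose leading coefficients are uniformly elliptic and bounded measurable, but generally discontinuous along the images of edges and vertices. The weak maximum principle for such divergence-form linear equations with bounded drift (\cite{GT} Theorem 8.1) then gives $\|u_\delta\|_{L^\infty(M)} \leq C$, with $C$ depending only on the Dirichlet data $\{0,1\}$, the ellipticity constants and $\|K\|_{L^\infty}$, and independent of $\delta$.

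For the Hölder estimate I invoke the De Giorgi-Nash-Moser Hölder regularity theorem for weak solutions of divergence-form elliptic equations with bounded measurable coefficients (\cite{GT} Theorem 8.24) on the same identified domain. Since the transplanted drift $\tilde b_\delta$ and right-hand side $\tilde f_\delta$ are uniformly bounded in $L^\infty$ and the $L^\infty$ bound from the first step controls $\|u_\delta\|_{L^2}$, one obtains an interior Hölder estimate $[u_\delta]_{C^{0,\alpha}} \leq C$ on a neighbourhood of the interior and of each face $F$ (which becomes interior after identification). Near the Dirichlet faces $T$ and $B$, where $u_\delta$ is constant on the boundary, the corresponding boundary Hölder estimate for Dirichlet problems yields an analogous bound. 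Covering $M$ by finitely many such charts and combining gives $\|u_\delta\|_{C^{0,\alpha}(M)}\leq C$ uniformly in $\delta$.

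The main obstacle lies at the horizontal edges of $M$ (where $T$ or $B$ meets $F$) and at the vertices, where several identifications have to be performed simultaneously and the reflected metric coefficients are only bounded measurable, not even Lipschitz, particularly once the dihedral angle differs from $\pi/2$. This is precisely why De Giorgi-Nash-Moser, rather than Schauder, is the appropriate regularity tool here: it requires no continuity of the leading coefficients, so the Hölder estimate persists, although the Hölder exponent $\alpha$ may have to be chosen small, depending on the geometry of $M$ and in particular on its dihedral angles, but crucially \emph{independent} of $\delta$. With such $\alpha$ fixed once and for all, the uniform $C^{0,\alpha}$ bound follows, and is exactly the estimate needed to drive the continuity method of Appendix \ref{Existence of spacetime harmonic functions} and pass to the limit $u = \lim_{\delta\to 0} u_\delta$.
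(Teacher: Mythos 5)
Your proposal is correct and follows essentially the same route as the paper: a $\delta$-uniform $L^\infty$ bound via the weak maximum principle for the mixed problem, followed by the De Giorgi--Nash--Moser global H\"older estimates of \cite{GT} Chapter 8, whose constants depend only on structure bounds uniform in $\delta$, with the Neumann faces handled by the reflection/bi-Lipschitz identification of Appendix \ref{Existence of spacetime harmonic functions 2}. Your explicit recasting of the nonlinearity as a bounded drift $b_\delta$ plus a remainder $f_\delta$ with $|f_\delta|\le\delta\|K\|_{L^\infty}$ is just a spelled-out version of the paper's appeal to the structural inequality $|K(\sqrt{\delta^2+|\Na u_\delta|^2}-\delta)|\le |K||\Na u_\delta|$ before invoking \cite{GT} Theorems 8.22 and 8.29.
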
 \label{Holder estimate}
\begin{proof}
For all non-negative $v\in \ti \mH_0$, 
\be
\begin{split}
\int_{M} div(\Na u_{\d})v+K(\sqrt{\d^2+|\Na u_{\d}|^2}-\delta)v=0.
\end{split}
\ee
Thus, 
\be
\begin{split}
\int_{M} \Na u_{\d}\cdot \Na v\leq 2 |K|_{C^0}\int_{M} v|\Na u_{\d}|. 
\end{split}
\ee 

Also consider that 
\be
\begin{split}
\int_{M} div(\Na (-u_{\d}))v-K(\sqrt{\d^2+|\Na (-u_{\d})|^2}-\delta)v=0,
\end{split}
\ee

i.e. $-u_{\d}$ solves PDE of the same structure.  Then, we can follow the proof of Theorem 8.1 in \cite{GT} to conclude that $u_{\d}\geq 0$ and
\be \label{WMP} 
\begin{split}
\sup_{M}|u_{\delta}| \leq \sup_{T \cup B} |u_{\delta}|=1.
\end{split}
\ee 
Note that $|K(\sqrt{\d^2+|\Na u_{\d}|^2}-\delta)|\leq |K||\Na u_{\d}|$,  we can see that $G_{\delta}$ is not linear but still satisfies the structural inequality in \cite{GT} Section 8.5, which is required for the global Hölder estimate.  In particular, \cite{GT} Theorem 8.22 and 8.29 still hold and we can conclude the following.  

\be 
\begin{split}
||u_{\delta}||_{C^{0,\alpha}(M)} \leq C (\sup_{M}|u_{\delta}|)\leq C. 
\end{split}
\ee 
\end{proof}

\begin{prop} \label{W12estimate}
Let $u_{\d}\in C^{0,\alpha}(M)\cap W^{1,2}(\mathring M)$ be a weak solution to the regularised PDE above, then 
\be
\begin{split}
||u_{\d}||^2_{W^{1,2}(\mathring M)}\leq & C \left( (||K||^2_{C^0(M)}+1)(vol(M)+1) \right ).
\end{split}
\ee 
\end{prop}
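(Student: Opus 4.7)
The plan is a standard energy estimate obtained by testing the weak formulation with a carefully chosen admissible function. Since $u_\d = 0$ on $B$ and $u_\d = 1$ on $T$, the function $u_\d$ itself does not lie in $\ti\mH_0$; however, if we fix once and for all a function $\phi \in \ti\mB$ (independent of $\d$) with $\phi = 0$ on $B$, $\phi = 1$ on $T$, and $\p_\nu \phi = 0$ on $F$ (for instance the function constructed in the linearisation step of Appendix \ref{Existence of spacetime harmonic functions}, which is smooth and bounded in $W^{1,2}(\mathring M)$ by a constant depending only on $P$ and $g$), then $v := u_\d - \phi \in \ti\mH_0$ is a legitimate test function.

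Substituting $v = u_\d - \phi$ into the weak equation yields
\begin{equation*}
\int_M |\Na u_\d|^2\, dV = \int_M \Na u_\d \cdot \Na \phi\, dV + \int_M K\bigl(\sqrt{\d^2 + |\Na u_\d|^2} - \d\bigr)(u_\d - \phi)\, dV.
\end{equation*}
The first term on the right is controlled by Cauchy--Schwarz together with Young's inequality as $\tfrac14 \int_M |\Na u_\d|^2 + \int_M |\Na \phi|^2$. For the second term, the elementary inequality $\sqrt{\d^2 + s^2} - \d \le |s|$ combined with the uniform sup bound $|u_\d| \le 1$ from \eqref{WMP} (so $|u_\d - \phi| \le 1 + \|\phi\|_{C^0}$) gives
\begin{equation*}
\left| \int_M K\bigl(\sqrt{\d^2 + |\Na u_\d|^2} - \d\bigr)(u_\d - \phi)\, dV \right| \le C\,\|K\|_{C^0(M)} \int_M |\Na u_\d|\, dV,
\end{equation*}
and one more application of Cauchy--Schwarz and Young's inequality bounds this by $\tfrac14 \int_M |\Na u_\d|^2 + C\,\|K\|^2_{C^0(M)}\,\mathrm{vol}(M)$.

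Combining these estimates absorbs $\tfrac12 \int_M |\Na u_\d|^2$ into the left-hand side, yielding
\begin{equation*}
\int_M |\Na u_\d|^2\, dV \le C\bigl(\|K\|^2_{C^0(M)}\,\mathrm{vol}(M) + \|\Na \phi\|^2_{L^2}\bigr).
\end{equation*}
Together with $\|u_\d\|^2_{L^2(M)} \le \mathrm{vol}(M)$ from \eqref{WMP}, this gives the claimed $W^{1,2}$ bound. The main (minor) obstacle is just book-keeping the nonlinear $\sqrt{\d^2+|\Na u|^2}$ term uniformly in $\d$; once one observes $\sqrt{\d^2+s^2}-\d \le |s|$, everything reduces to a linear energy estimate with bounded right-hand side, so no genuine difficulty arises.
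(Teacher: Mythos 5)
Your argument is correct, but it follows a different route from the paper. The paper proves this estimate with a Caccioppoli-type argument in two pieces: it tests the equation against $\eta^2 u_\d$, where $\eta$ is a cutoff vanishing on $T$ with $|\Na \eta|\le C/r$, integrates by parts using $u_\d=0$ on $B$ and $\p_\nu u_\d=0$ on $F$ to bound $\int_{M\setminus T_r}|\Na u_\d|^2$, and then repeats the computation for $w_\d:=u_\d-1$ (which vanishes on $T$) with a cutoff vanishing on $B$, combining the two to cover all of $M$. You instead make a single global energy estimate by testing with $v=u_\d-\phi$, where $\phi$ is a fixed extension of the boundary data; since $u_\d$ and $\phi$ share the same traces on $T$ and $B$, $v\in\ti\mH_0$ is admissible (note you do not even need $\p_\nu\phi=0$ on $F$ for admissibility, only $v=0$ on $T\cup B$), and the nonlinearity is handled exactly as in the paper via $\sqrt{\d^2+s^2}-\d\le|s|$, the sup bound \eqref{WMP}, and Young's inequality. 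Your approach is cleaner and avoids the two-cutoff bookkeeping, at the modest cost of introducing the constant $\|\Na\phi\|_{L^2}^2$, which is harmless since $\phi$ is fixed independently of $\d$ (it is the same $\phi$ used in the linearisation step), so the resulting bound is uniform in $\d$ and of the same form as the paper's, with a constant depending only on $(M,g)$ and the fixed extension.
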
 
\begin{proof}
Fix $0<r<<1$ and $\eta \in C^{\infty}(\bar M)$ such that 
\begin{enumerate}
\item 
$\eta(p)=1$ if $dist(p,T)\geq r$, 
\item
$\eta(p)=0$ if $p\in T$, 
\item
$|\Na \eta|\leq \frac{C}{r}$. 
\end{enumerate}
We have on $M$, 
\be
\begin{split}
\eta^2 u_{\d} \Lp u_{\d}=-\eta^2 u_{\d} (K\sqrt{\d^2+|\Na u_{\d}|^2}-\d K).
\end{split}
\ee
First, for the left hand side, consider 
\be
\begin{split}
\int_M \eta^2 u_{\d} \Lp u_{\d}=& -\int_M \Na (\eta^2 u_{\d}) \cdot \Na u_{\d} +\int_M div(\eta^2 u_{\d} \Na u_{\d}) \\
=& -\int_M (2 \eta u_{\d} \Na \eta +\eta^2 \Na u_{\d})\cdot \Na u_{\d} +\int_{\p M} \eta^2 u_{\d} \Na_{\nu} u_{\d}\\
=& -\int_M 2 \eta u_{\d} \Na \eta \cdot \Na u_{\d} +\eta^2 |\Na u_{\d}|^2.\\
\end{split}
\ee
Then, 
\be
\begin{split}
-\int_M \eta^2 |\Na u_{\d}|^2=& \int_M -\eta^2 u_{\d} K(\frac{\Na u_{\d}}{\sqrt{\d^2+|\Na u_{\d}|^2}+\d}\cdot \Na u ) + 2 \eta u_{\d} \Na \eta \cdot \Na u_{\d}. \\
\end{split}
\ee
Therefore, 
\be
\begin{split}
\int_M \eta^2 |\Na u_{\d}|^2\leq & \int_M \frac{1}{C}\eta^2|\Na u_{\d}|^2 + 2C |K|^2 \eta^2 u_{\d}^2 + \frac{1}{C}\eta^2  |\Na u_{\d}|^2+2Cu_{\d}^2|\Na \eta|^2\\
\end{split}
\ee
We thus have, 
\be
\begin{split}
\int_{M\setminus T_r} |\Na u_{\d}|^2\leq& C \int_M |K|^2 \eta^2 u_{\d}^2+ u_{\d}^2|\Na \eta|^2\\
\leq& C \left( (||K||^2_{C^0(M)}+ \frac{1}{r^2})\int_M u_{\d}^2 \right), 
\end{split}
\ee 
where $T_r:=\{p\in M \, |\, dist(p,T)\leq r\}$. 

Note that $w_{\delta}:=u_{\delta}-1$ is a solution to the regularised PDE with homogeneous Neumann condition in $F$, $w_{\d}=0$ on $T$ and $w_{\d}=-1$ on $B$. Then, one can choose a cut-off function being 0 on $B$ to carry out the former computation. To conclude, we have 
\be 
\begin{split}
\int_M |\Na u_{\d}|^2\leq & C \left( (||K||^2_{C^0(M)}+ \frac{1}{r^2})(\int_M u_{\d}^2 + (\int_M u_{\d}^2)^{\frac{1}{2}}+vol(M)+vol(M)^{\frac{1}{2}}) \right )\\
\leq & C \left( (||K||^2_{C^0(M)}+ \frac{1}{r^2})(\int_M u_{\d}^2 +vol(M)+2 \right ). 
\end{split}
\ee 
Now,  a uniform $W^{1,2}(\mathring M)$ norm on $u_{\d}$ can be obtained by Proposition \ref{Holder estimate}. 
\end{proof}

\subsection{Existence and regularity of solution}
For each $\delta$,  consider $\ti T^{(1)}|_{[0,0]}$ as in Section \ref{Linearisation of the regularised operator}, by Lemma \ref{solvabliitydivform}, \cite{GT} Theorem 17.6 and 8.29, there exists $t_{\d}>0$ such that we have a solution to $u_{\d,t_{\d}}\in C^{0,\alpha}(M) \cap W^{1,2}(\mathring M)$ to the regularised PDE for the boundary data on $T$ being $t_{\d}$.  Then,  either by method of continuity (with the uniform estimates independent of $t_{\d}\in[0,1]$ in Section \ref{weakaprioriestimate}) or scaling, $u_{\d}:=\frac{u_{\d,t_{\d}}}{t_{\d}}$ is a solution to the regularised PDE.  As aforementioned, we can see that $u_{\d}\geq 0$ and $||u_{\d}||_{C^{0,\alpha}(M)}$ is uniformly bounded.  

By elliptic regularity theory,  we get that $u_{\delta}$ is $C^{2,\alpha}$ away from the edges.  And for each compact $\Omega \subset M \setminus \bar \mE$, where $\mE$ denotes the edges, by interior estimate and boundary estimate as in Appendix \ref{Existence of spacetime harmonic functions}, $||u_{\d}||_{C^{2,\alpha}(\Omega)}$ is uniformly bounded since $|u_{\d}|\leq 1$.  Furthermore,  by \cite{GL} Theorem 4.1, since the dihedral angle are assumed to be less than $\pi$ everywhere, one can see that $u_{\d}$ is $C^{1,\alpha}$ up to the vertical edges away from $\bar T$ and $\bar B$.  And for each compact $\ti \Omega \subset M \setminus (\bar T \cup \bar B)$, $||u_\d||_{C^{1,\alpha}(\ti \Omega)}$ is uniformly bounded since $u_{\d}$ is uniformly bounded in $C^0(M)$ and $W^{1,2}(\mathring M)$.  Therefore,  taking a convergent subsequence in $C^{0,\beta}(M)$, where $0<\beta<\alpha$, as $\d\to 0$, one can obtain a classical solution $u\in  C^{0,\alpha}(M)\cap C^{2,\alpha}_{loc}(M\setminus \bar \mE)\cap C^{1,\alpha}_{loc}(M \setminus (\bar T \cup \bar B))$ to the PDE in Lemma \ref{PDEcube2}.  Moreover,  $u\in W^{3,p}_{loc}(\mathring M)$ by Kato's inequality and \cite{GT} Theorem 9.19.

\end{document}